\documentclass{amsart}
\usepackage{amsmath}
\usepackage{newtxmath}
\usepackage{tikz-cd}
\usepackage{fix-cm}
\usepackage{hyperref}
\usepackage{subfigure}

\newtheorem{thm}{Theorem}[section]
\newtheorem{defn}[thm]{Definition}
\newtheorem{prop}[thm]{Proposition}
\newtheorem{lem}[thm]{Lemma}

\DeclareMathOperator{\arcch}{arccosh}
\DeclareMathOperator{\arcth}{arctanh}
\DeclareMathOperator{\sech}{sech}

\DeclareMathOperator{\interior}{int}
\DeclareMathOperator{\image}{Im}
\DeclareMathOperator{\hess}{Hess}
\DeclareMathOperator{\area}{Area}

\begin{document}

\title[The existence of circle packing on hyperbolic surface]
{The existence of inversive distance circle packing on hyperbolic polyhedral surface}
\author{Xiang Zhu}
\address{Department of mathematics, Shanghai University, Shanghai, China, 200444.}
\email{zhux@shu.edu.cn}

\begin{abstract}
    In this paper, we prove that given a hyperbolic polyhedral metric with an inversive distance circle packing, and a target discrete curvature satisfying Gauss-Bonnet formula, there exist a unique inversive distance circle packing which is discrete conformal to the former one. We deform the surface by discrete Ricci flow, and do surgery by edge flipping when the orthogonal circles of some faces are about to be non-compact. The revised weighted Delaunay inequality of hyperbolic case implies the compactness of the orthogonal circle. We use a variational principle of a convex Ricci potential defined on the fiber bundles with cell-decomposition and differential structure based on Teichm\"uller space to finish the proof.
\end{abstract}

\maketitle

\section{Introduction}

\subsection{Statement of results}

The \emph{inversive distance} is the generalization to the cosine of the intersection angle of two circles. The \emph{inversive distance circle packing} is a polyhedral surface with disjoint circles (or cones precisely) centered at vertices (or cone points). As we know that the conformal mapping on Riemannian surface keeps the infinitesimal circle, given a (geometric) triangulation on this surface, if we adjust the radii of circles while fixing all the inversive distance with respect to edges, then that is an analog or discretization of a conformal map. A hyperbolic polyhedral surface is a two-dimensional Riemannian manifold with $-1$ Gauss curvature, except for finite vertices. The local metric near vertices is a hyperbolic cone. The \emph{discrete curvature} is $2\pi$ minus the cone angle.

This paper is a continuation of \cite{zhu2023existence}, and also a part of translation of the author's thesis \cite{zhu2019discrete}, in which we proved that
\begin{thm}
    Given a hyperbolic polyhedral surface with an inversive distance circle packing, for any target discrete curvature satisfying Gauss-Bonnet formula, there exist a unique one discrete conformal to the initial one.
\end{thm}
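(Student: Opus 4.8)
First, I would set up the variational framework on a fixed triangulation. A hyperbolic polyhedral metric with an inversive distance circle packing determines a triangulation $T$ of the surface $S$ with cone points $V$, a radius vector $r=(r_i)_{i\in V}$ with every $r_i>0$, and a fixed weight vector $I=(I_{ij})$ on the edges recording the inversive distances; being discrete conformal to the given packing means holding $I$ fixed (and $T$ fixed up to edge flips) while varying $r$, which I encode by $u_i=\log\tanh(r_i/2)$. For a non-degenerate hyperbolic triangle the edge lengths $\ell_{ij}$ and the inner angles $\theta^k_{ij}$ are smooth functions of the relevant $u$'s and $I$'s, and inside each triangle one has the symmetry $\partial\theta_i/\partial u_j=\partial\theta_j/\partial u_i$. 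Hence on the open set $\Omega_T$ of admissible $u$ where every triangle of $T$ is non-degenerate, the one-form $\sum_{i}(K_i(u)-\bar{K}_i)\,du_i$ is closed and integrates to a function $\mathcal E_T$ with $\nabla\mathcal E_T=K(u)-\bar{K}$ and, in suitable coordinates, positive definite Hessian, so $\mathcal E_T$ is locally strictly convex and the packing we seek is exactly a critical point of it; this already yields local uniqueness.

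The obstruction is that $\Omega_T$ need not be connected and $\mathcal E_T$ need not be proper on it, since a triangle can degenerate before the curvature reaches the target $\bar{K}$. I would run the discrete Ricci flow $\dot u=\bar{K}-K(u)$ while monitoring, for each face, the circle orthogonal to its three packing circles, i.e.\ the hyperbolic radical center; when such an orthogonal circle is about to cease being a compact circle --- it would become a horocycle, a hypercycle, or an ideal point --- I perform surgery by flipping the offending edge. The flipped picture is again an inversive distance circle packing, the new edge acquiring the inversive distance forced by the ambient hyperbolic metric, so the discrete conformal class is preserved; and the \emph{revised weighted Delaunay inequality} for the hyperbolic case --- the technical core of the argument --- shows that this new weight is admissible and that immediately after a legal flip every new orthogonal circle is again compact, so the flow continues.

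To turn this into an existence proof I would glue the cells $\Omega_T$, over all triangulations $T$ reachable by flips, along their codimension-one ``flip walls'' into one space. Following \cite{zhu2023existence}, this space is a fiber bundle over the Teichm\"uller space of hyperbolic cone metrics on $(S,V)$: the base records the underlying hyperbolic structure, the fibers the decorations (radii), and the weighted Delaunay condition supplies the cell decomposition together with a compatible differential structure. The energies $\mathcal E_T$ agree on overlaps and, using the Delaunay inequality once more, extend to a $C^1$, globally convex Ricci potential $\mathcal E$ on this bundle with $\nabla\mathcal E=K-\bar{K}$, while the surgery analysis makes $\mathcal E$ proper, because along any escaping path some orthogonal circle is driven out of the compact regime and $\mathcal E\to\infty$. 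A $C^1$, convex, proper function attains a minimum where its gradient vanishes, producing the desired packing; and since hyperbolic geometry admits no global rescaling, $\mathcal E$ is in fact strictly convex, so the minimizer, hence the packing, is unique. Equivalently, the extended Ricci flow is the gradient flow of $\mathcal E$, exists for all time, converges, and undergoes only finitely many flips.

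I expect the main obstacle to be precisely the two uses of the \emph{revised weighted Delaunay inequality}: showing that a legal hyperbolic edge flip restores compactness of the orthogonal circles, and that the Ricci potential stays $C^1$, convex, and --- above all --- proper across the flip walls. Establishing properness, i.e.\ controlling $\mathcal E$ as a triangle degenerates with no legal flip available, is the real crux of the whole proof.
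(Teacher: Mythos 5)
Your overall architecture matches the paper's: the change of variable $u_i=\log\tanh(r_i/2)$, the symmetry and negative definiteness of $\partial\theta/\partial u$ from Guo and Zhang et al.\ giving a strictly convex potential on each Delaunay cell, the gluing of cells over Teichm\"uller space into a $C^1$ structure via the generalized Ptolemy relation, and the Ricci flow with flip surgery. Up to that point you are reconstructing the paper's argument.

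The genuine gap is in your existence step. You propose to finish by showing the Ricci potential $\mathcal{E}$ is \emph{proper}, with the mechanism ``along any escaping path some orthogonal circle is driven out of the compact regime and $\mathcal{E}\to\infty$.'' This mechanism cannot work: the whole point of the surgery is that the configuration always lies in some weighted Delaunay cell $D_h(\mathcal{T})$, and the revised weighted Delaunay inequality (Theorem~\ref{del_in_trig_h}, $D_h(\mathcal{T})\subset\Xi(\mathcal{T})$) guarantees the orthogonal circles \emph{never} leave the compact regime. What actually degenerates at the boundary of the domain $\mathbb{R}_{<0}^V$ is the radii: some $u_i\to-\infty$ (radius $\to 0$) or $u_i\to 0$ (radius $\to\infty$). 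The paper does not prove properness of $\mathcal{E}$ at all; it instead shows that the curvature map $\kappa_d$ is open (injectivity from strict convexity plus invariance of domain) and that every sequence escaping $\mathbb{R}_{<0}^V$ has a subsequence whose curvature converges to $\partial K$, where $K=\{\mathbf{K}\in(-\infty,2\pi)^V:\sum K_i>2\pi\chi(S)\}$. That boundary analysis is the part you are missing: one passes to a subsequence in a single cell, then argues that either all radii tend to $0$ (so $\area(S)\to 0$ and Gauss--Bonnet forces $\sum K_i\to 2\pi\chi(S)$), or some radius tends to $\infty$ (so that $K_i\to 2\pi$), or there is a nonempty proper set of ``good'' vertices with radius $\to 0$; in the last case the compactness of orthogonal circles rules out a triangle with exactly one good vertex, which forces some bad vertex to be surrounded entirely by good vertices and hence to have curvature $\to 2\pi$. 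Only then does openness plus connectedness of $K$ give surjectivity, hence existence. You correctly identified this degeneration analysis as ``the real crux,'' but the tool you reach for (properness of the energy triggered by loss of compactness of orthogonal circles) is the wrong one, and without the good/bad vertex argument the existence claim is unproven. A smaller inaccuracy: strict convexity in the hyperbolic case comes from the explicit negative definiteness of the angle Jacobian, not merely from the absence of a global rescaling.
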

Here the definition of discrete conformal only allows radius changing on the weighted Delaunay triangulation. If the weighted Delaunay condition is about to break, we have a generalized Ptolemy equation for the triangulation to do surgery of edge flipping. This theorem will be claimed precisely again after notations in Theorem \ref{main_h}.

The difference between this paper and \cite{zhu2023existence} is just like the difference between \cite{gu2018discreteII} and \cite{gu2018discrete}. Gu et al. proved the discrete conformal deformation on the vertex scaling case, which depends on the compactness of circumcise in $\mathbb{H}^2$. Similarly, we have to deal with the compactness of orthogonal circle.

\subsection{Related work}
An introduced of circle packing is presented in \cite{zhu2023existence}.
A groundbreaking work on circle packing is proposed by Thurston in \cite{thurston1979geometry}. Then there are related works in \cite{chow2003combinatorial}, \cite{marden1990on}, \cite{stephenson2005introduction}, \cite{bobenko2004variational}, \cite{zhang2014unified}, \cite{bobenko2015discrete}, \cite{gu2018discrete}, \cite{gu2018discreteII}, \cite{sun2015discrete}, \cite{penner1987the}, \cite{rivin1994euclidean}, \cite{bowers2004uniformizing}, \cite{bowers2003planar}, \cite{guo2011local}, \cite{luo2011rigidity}, \cite{springborn2008variational}, \cite{ge2017deformation}, \cite{ge2017deformationII}, \cite{ge2019deformation}, \cite{bobenko2023decorated}, \cite{rodin1987convergence}, \cite{stephenson1999approximation}, \cite{rodin1987convergence}, \cite{stephenson1999approximation}, \cite{he1996convergence}, \cite{gu2019convergence}, \cite{luo2021convergence},\cite{chen2022bowersstephensons}, etc. We notice that there are some related work by Bobenko and Lutz in \cite{bobenko2023decorated2} recently. Due to the limitations of the author's knowledge, only a partial list of work is provided above. Please forgive the incompleteness.

\subsection{Organization of the paper}

In Section \ref{section2}, we recall the related definitions, notations and propositions, some of them are from the previous paper \cite{zhu2023existence}.

In Section \ref{section3}, we study formulas on orthogonal circle, whose compactness is as important as triangle inequalities in Euclidean case.

In Section \ref{section4}, we proved diffeomorphism between fiber bundles with cell decomposition based on Teichm\"uller spaces. Since the computation is complicated, we attach the codes.

In Section \ref{section5}, we define the discrete conformal equivalent class as a fiber, and prove the main theorem by a variational principle and then the discrete Ricci flow.

\section{Preliminary and notation}\label{section2}

\subsection{Triangulation}

Suppose $S$ is a closed surface, and given a non-empty finite subset $V \subset S$, then we call the tuple $(S,V)$ a \emph{marked surface}.

In this paper, we use $\Delta$-complex introduced in \cite{hatcher2002algebraic} instead of simplicial complex to define a triangulation, which allows the cells gluing itself. Given a marked surface $(S,V)$, a (topological) triangulation $\mathcal{T}$ is a $\Delta$-complex decomposition up to an isotopy fixing $V$, whose set of $0$-cells is $V$.

We only consider the marked surface with the negative Euler characteristic when punched, which is $\chi(S \setminus V)<0$. If not, there is no triangulation for it. Also, the marked surface should be oriented, or we would study its oriented double cover instead. 

We denote $e_{ij}$ to be the $1$-cell of $\mathcal{T}$, or the \emph{edge}, connecting $v_i$ and $v_j$, and $f_{ijk}$ to be the $2$-cell of $\mathcal{T}$, or the \emph{face}, surrounded by $e_{ij}$, $e_{ik}$ and $e_{jk}$. Denote the number of vertices $\left|V\right|$ by $n$, and the genus of $S$ by $g$. We use $\mathbb{R}^A$ to represent the space of function $\{\,\mathbf{x} \colon A \to \mathbb{R} \mid \mathbf{x}(a_i)=x_i \in \mathbb{R}\,\}$ defined on a finite set domain $A=\{a_1,\dots,a_m\}$. Here $A$ might be $V$ or $E$.

We denote a \emph{hinge} $\Diamond_{ij;kl}$ to be the edge $e_{ij}$ with two faces $f_{ijk}$ and $f_{ijl}$ at both sides. A \emph{flipping} of $e_{ij}$ is to replace the hinge $\Diamond_{ij;kl}$ by $\Diamond_{kl;ij}$. There are infinitely many triangulations on a marked surface in general, however, for any two triangulations of $(S,V)$, one can transform to another one by a finite number of flipping \cite{hatcher1991triangulations}\cite{Mosher1988Tiling}\cite{penner1987the}.

Given a marked surface $(S,V)$, if for any point $p\in S\setminus V$ there exists a neighborhood isometric to a region in $\mathbb{H}^2$, and for any vertex $v_i \in V$ there exists a neighborhood isometric to a region in a \emph{hyperbolic cone} with a cone angle of $\varphi_i$, which is the set
\begin{equation}%
    \left\{\, (r,\theta) \mid 0 \le r < 1, \theta \in \mathbb{R}/\varphi\mathbb{Z} \,\right\} / (0,\theta_1) \sim (0,\theta_2)
\end{equation}%
with the metric
\[
    \frac{4(dr^2+r^2d\theta^2)}{(1-r^2)^2},
\]
then we say that $(S,V)$ has a \emph{hyperbolic polyhedral metric} or a \emph{piecewise hyperbolic metric}, denoted by $d_h$, where $h$ means \emph{hyperbolic}. A marked surface with a metric $(S,V,d_h)$ is called a \emph{hyperbolic polyhedral surface} or a \emph{piecewise hyperbolic surface}. For any points $p,q \in S$, we use $d_h(p,q)$ to indicate the hyperbolic distance between $p$ and $q$. 

The \emph{discrete curvature} at $v_i$ is $K_i \coloneqq 2\pi-\varphi_i < 2\pi$. The Gauss-Bonnet formula is  $\sum_{i=1}^{n}K_i=2\pi \chi(S)+\area(S)$, where $\area$ means the hyperbolic area of the region.

Any hyperbolic polyhedral surface $(S,V,d_h)$ has a \emph{geodesic triangulation}, that is, a $\Delta$-complex decomposition with all edges geodesic on $d_h$. For example, the Delaunay triangulation is a geodesic triangulation for a hyperbolic polyhedral surface \cite{gu2018discreteII}.

\subsection{Teichm\"uller space}

The \emph{Teichm\"uller space of hyperbolic polyhedral metric} on $(S,V)$, denoted by $Teich_h(S,V)$, is the space of all hyperbolic polyhedral metric on $(S,V)$ considered up to isometry isotopic to the identity map fixing $V$. The isometry class of a hyperbolic polyhedral metric $d_h$ is denoted by $[d_h]$.

Given a triangulation $\mathcal{T}$ of $(S,V)$, the edge length of $\mathcal{T}$ should be in the convex polytope
\[
    \mathbb{R}^{E(\mathcal{T})}_\Delta \coloneqq
    \left\{\, \mathbf{x} \in \mathbb{R}^{E(\mathcal{T})} \mid
    \forall f_{ijk} \in F(\mathcal{T}),\,
    0<x(e_{ij})<x(e_{jk})+x(e_{ki}) \,\right\}
\]
Given a length function $\mathbf{x} \in \mathbb{R}^{E(\mathcal{T})}_\Delta$, replace $f_{ijk}$ by a hyperbolic triangle of edge lengths $\mathbf{x}(e_{ij}),\mathbf{x}(e_{jk}),\mathbf{x}(e_{ki})$, and glue them by isometries along the corresponding edges, then we construct a piecewise flat metric $d_\mathbf{x}$ and produce an injective map
\[
    \Psi_\mathcal{T} \colon \mathbb{R}^{E(\mathcal{T})}_\Delta
    \to Teich_h(S,V) \quad \mathbf{x} \mapsto [d_\mathbf{x}].
\]
Let $P_h(\mathcal{T}) \coloneqq \Psi_\mathcal{T}(\mathbb{R}^{E(\mathcal{T})}_\Delta)$, we have
\[
    Teich_h(S,V)=\bigcup_\mathcal{T}P_h(\mathcal{T}), 
\]
where the union is over all triangulations of $(S,V)$. 

Similar to $Teich_h(S,V)$ in \cite{zhu2023existence}, we can prove that $Teich_h(S,V)$ is a simply connected real analytic manifold in the same way by using hyperbolic cosine law \eqref{cos} and pentagon relation.

Given a surface $\Sigma$ with or without boundary, the \emph{Teichm\"uller space of hyperbolic metric} on $\Sigma$, denoted by $Teich(\Sigma)$, is the space of all hyperbolic metric with closed geodesic boundary on $\Sigma$ considered up to isometry isotopic to the identity map. We denote a hyperbolic metric by $d$ and its isotopy class by $[d]$.

In this paper we only consider the case that $\Sigma$ is oriented, compact with boundaries, and $\chi(\Sigma)<0$. By gluing a topological open dist $D_i$ on every boundary of $\Sigma$, and selecting any point in $D_i$ as the vertex $v_i$, we get a closed marked surface $(S,V)$. The inverse operation is also legal. We call that $(S,V)$ and $\Sigma$ are \emph{related}. A \emph{truncated triangulation} of $\Sigma$ is the intersection of $\Sigma$ and a triangulation of its related surface $(S,V)$, denoted by $\mathcal{T}=(\Gamma,E,F)$, where $\Gamma=\{\,\gamma_i=\partial D_i,\,i=1 \dots n\,\}$ and $n=\left|V\right|$. For any hyperbolic metric $d$ on $\Sigma$, there exist a unique geodesic isotopic to $e_i \in E$ and orthogonal to the geodesic boundary $\partial \Sigma$ \cite{luo2007teichmuller} \cite{dai2008variational}.

\begin{defn}\label{Oemga}	
    Given a hyperbolic surface with $(\Sigma,d)$ and a truncated triangulation $\mathcal{T}$, we define a length function    
    \[
        \mathbf{x} \colon E(\mathcal{T}) \to \mathbb{R}_{>0}
        \quad e_i \mapsto x_i
    \]
    or $\mathbf{x} \in \mathbb{R}_{>0}^{E(\mathcal{T})}$, called the \emph{length coordinate} of $d$, where $x_i$ is the length of the unique geodesic isotopic to $e_i$ and orthogonal to $\partial \Sigma$. We define the coordinate chart with respect to $\mathcal{T}$ by
    \[
        \Omega_\mathcal{T}^{-1} \colon Teich(\Sigma) \to \mathbb{R}_{>0}^{E(\mathcal{T})}
        \quad [d] \mapsto \mathbf{x}.
    \]
\end{defn}

\begin{thm}\label{teich home}		
    $\Omega_\mathcal{T}$ is a homeomorphic map \cite{dai2008variational}\cite{ushijima1999canonical}.
\end{thm}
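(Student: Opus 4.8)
The statement claims that the length coordinate map $\Omega_\mathcal{T} \colon \mathbb{R}_{>0}^{E(\mathcal{T})} \to Teich(\Sigma)$ is a homeomorphism. The plan is to proceed in two halves: first exhibit $\Omega_\mathcal{T}^{-1}$ as a well-defined continuous injection, and then prove surjectivity together with continuity of the inverse, using a dimension count and invariance of domain to upgrade a continuous bijection to a homeomorphism.

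First I would establish that the map $[d] \mapsto \mathbf{x}$ is well-defined and continuous. By the cited existence and uniqueness result (\cite{luo2007teichmuller}, \cite{dai2008variational}), for each isotopy class $[d]$ and each edge $e_i \in E(\mathcal{T})$ there is a unique geodesic arc isotopic to $e_i$ meeting $\partial\Sigma$ orthogonally at both ends; its length $x_i$ depends only on $[d]$, giving a well-defined map into $\mathbb{R}_{>0}^{E(\mathcal{T})}$. Continuity follows because the orthogeodesic length is a smooth function of the hyperbolic structure (e.g., it can be read off from traces of the holonomy representation, which vary analytically over $Teich(\Sigma)$). Injectivity is the key geometric input: cutting $\Sigma$ along the orthogeodesic arcs decomposes it into right-angled hexagons, one for each face $f_{ijk}$, and by the right-angled hexagon cosine law the six side lengths of each hexagon (three "edge" sides of lengths $x_i, x_j, x_k$ and three "boundary" sides) are determined by, and determine, one another; gluing these hexagons back along matching sides recovers $(\Sigma, d)$ up to isotopy, so $\mathbf{x}$ determines $[d]$.

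Next I would handle surjectivity. Given any prescribed $\mathbf{x} \in \mathbb{R}_{>0}^{E(\mathcal{T})}$, I build a hyperbolic metric realizing it: for each face $f_{ijk}$, the right-angled hexagon with alternating sides $x_i, x_j, x_k$ exists and is unique in $\mathbb{H}^2$ (this is exactly the content of the right-angled hexagon formula — any three positive reals occur as the lengths of three pairwise non-adjacent sides), so there are no inequality constraints, which is why the domain is the full open orthant $\mathbb{R}_{>0}^{E(\mathcal{T})}$ rather than a polytope. Gluing these hexagons by isometries along the boundary-orthogonal sides produces a hyperbolic surface with geodesic boundary whose orthogeodesic lengths are the given $x_i$; its isotopy class is the desired preimage. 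This shows $\Omega_\mathcal{T}$ is a continuous bijection.

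Finally, to conclude it is a homeomorphism, I would invoke invariance of domain: both $\mathbb{R}_{>0}^{E(\mathcal{T})}$ and $Teich(\Sigma)$ are manifolds of the same dimension, namely $|E(\mathcal{T})| = 6g - 6 + 3n$ (for $\Sigma$ of genus $g$ with $n$ boundary components, $|E| = -3\chi(\Sigma) = 6g-6+3n$, matching $\dim Teich(\Sigma)$), so a continuous bijection between them is automatically open, hence a homeomorphism; alternatively one checks directly that the hexagon-gluing construction depends continuously on $\mathbf{x}$, giving continuity of $\Omega_\mathcal{T}$. I expect the main obstacle to be the injectivity argument — specifically, verifying carefully that the hexagon decomposition is canonical (the orthogeodesics are pairwise disjoint and cut $\Sigma$ into exactly the hexagons dual to the faces) and that reassembling determines the marking, not just the unmarked surface; the dimension bookkeeping and the hexagon trigonometry are then routine, and since the paper explicitly cites \cite{dai2008variational} and \cite{ushijima1999canonical} for this theorem, the cleanest route is to reduce each step to the statements proved there.
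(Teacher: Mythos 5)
Your proposal is correct and follows essentially the same route as the sources the paper defers to for this theorem (the paper itself gives no proof, only the citations): the unique orthogeodesic representatives cut $\Sigma$ into right-angled hexagons dual to the faces, the hexagon trigonometry gives both injectivity and surjectivity with no inequality constraints on $\mathbb{R}_{>0}^{E(\mathcal{T})}$, and continuity of the gluing construction (or invariance of domain with the matching dimension count $|E|=-3\chi(\Sigma)=6g-6+3n$) upgrades the bijection to a homeomorphism. The one point you rightly flag as needing care --- that the geodesic representatives of the edges are pairwise disjoint so the decomposition into hexagons is genuinely realized --- is exactly what the cited references \cite{dai2008variational}\cite{ushijima1999canonical} supply.
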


In paper \cite{zhu2023existence}, Zhu introduced the \emph{discriminant of inversive distance} 
\[
    \Delta_{abc} \coloneqq a^2+b^2+c^2+2abc-1,
\]
and proved the following theorem.

\begin{thm}\label{teich home formula}
    Suppose $\mathcal{T}$ is a triangulation of $(S,V)$, and we get another triangulation $\mathcal{T}'$ by flipping the diagonal of the hinge $\Diamond_{ij;kl}$, then the transform map between these two coordinate charts is
    \[
        \Omega_{\mathcal{T}'}^{-1} \circ \Omega_\mathcal{T} \colon
        \mathbb{R}_{>0}^{E(\mathcal{T})} \to \mathbb{R}_{>0}^{E(\mathcal{T}')}
        \quad \mathbf{l} \mapsto \mathbf{l}'
    \]
    \[			
        \mathbf{l}'(e_m)=			
        \begin{cases}
            \mathbf{l}(e_m) & e_m \ne e_{kl} \\
            \arcch f & e_m = e_{kl}		
        \end{cases}
    \]
    where $a=\cosh \mathbf{l}(e_{ki}),b=\cosh \mathbf{l}(e_{il}),c=\cosh \mathbf{l}(e_{lj}),d=\cosh \mathbf{l}(e_{jk}),e=\cosh \mathbf{l}(e_{ij})$ and
    \begin{equation}\label{f}
        f = \frac{ab+cd+ace+bde+\sqrt{\Delta_{ade}}\sqrt{\Delta_{bce}}}{e^2-1},
    \end{equation}
    See Figure \ref{fig:4hyp}.

    For any two different triangulation of $(S,V)$, we can transform from one to another by finite steps of flipping, and the corresponding transform map is the composition of the transform maps with respect to the flipping.
\end{thm}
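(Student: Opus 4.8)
The plan is to establish the flip formula \eqref{f} by an explicit hyperbolic-trigonometry computation in the universal cover $\mathbb{H}^2$, and then obtain the last sentence as a formal consequence of the connectivity of the flip graph. First I would pass to the right-angled hexagon picture: for a single hinge $\Diamond_{ij;kl}$ in a truncated triangulation, developing the two faces $f_{ijk}$ and $f_{ijl}$ into $\mathbb{H}^2$ together with the four boundary geodesics $\gamma_i,\gamma_j,\gamma_k,\gamma_l$ produces a right-angled hexagon-type configuration whose six "long" sides carry the lengths $\mathbf{l}(e_{ki}),\mathbf{l}(e_{il}),\mathbf{l}(e_{lj}),\mathbf{l}(e_{jk}),\mathbf{l}(e_{ij})$ (five of them) plus the new diagonal length $\mathbf{l}'(e_{kl})$. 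Concretely, I would place the geodesic containing $e_{ij}$ on a fixed line, drop the common perpendiculars realizing $e_{ki},e_{il}$ on one side and $e_{lj},e_{jk}$ on the other, and locate the four boundary curves $\gamma_i,\dots,\gamma_l$; the perpendicular from $\gamma_k$ to $\gamma_l$ then has length $\mathbf{l}'(e_{kl})$, and $\cosh \mathbf{l}'(e_{kl})=f$ should fall out of the right-angled hexagon/pentagon relations (which are exactly the relations the excerpt says control the analytic structure of $Teich_h$ and $Teich(\Sigma)$). The substitutions $a=\cosh\mathbf{l}(e_{ki})$, etc., and $e=\cosh\mathbf{l}(e_{ij})$ are made so that the answer is rational in $a,b,c,d,e$ up to the two square roots $\sqrt{\Delta_{ade}},\sqrt{\Delta_{bce}}$, which are precisely the discriminants of the inversive distances attached to the two faces $f_{ijk}=f_{ikl}$-wait, to $f_{ijk}$ and $f_{ijl}$; the appearance of $\Delta_{ade}$ and $\Delta_{bce}$ is the signal that one is really computing in each of the two hexagons cut out by the boundary geodesics and then gluing along $e_{ij}$.

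The computation itself I would organize via the hyperboloid or, more cleanly, via $2\times 2$ real matrices: represent each of the relevant geodesics by a unit vector in the Lorentz model (spacelike vector orthogonal to the geodesic), so that $\cosh$ of a distance between two disjoint geodesics, with a common perpendicular, is the absolute value of the Lorentzian inner product of the corresponding unit normals, while the same inner product with the other sign corresponds to intersecting geodesics. Writing $n_{ij},n_k,n_l$, etc., for these normals, the five given lengths and the four cone/boundary data pin down all inner products among $n_{ij}$, the perpendicular feet, and $n_k,n_l$ up to the reflection ambiguity across the line of $e_{ij}$; expanding $\langle n_k,n_l\rangle$ in this basis yields a polynomial in $a,b,c,d,e$ with a single cross term that is the product of the two face discriminants, giving \eqref{f}. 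The sign of each square root is fixed by orientation: $f_{ijk}$ and $f_{ijl}$ lie on opposite sides of $e_{ij}$, so the two contributions add rather than cancel, which is why \eqref{f} has $+\sqrt{\Delta_{ade}}\sqrt{\Delta_{bce}}$ and not $\pm$. It is also here that one must check the edges $e_m\ne e_{kl}$ are genuinely unchanged: their defining geodesics (unique orthogonal to $\partial\Sigma$ in the given isotopy class) do not depend on which diagonal triangulates the square $f_{ijk}\cup f_{ijl}$, so $\mathbf{l}'(e_m)=\mathbf{l}(e_m)$ is immediate from Definition \ref{Oemga}.

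The main obstacle I expect is purely computational bookkeeping: correctly normalizing the Lorentz normals, keeping track of which inner products are $+\cosh$, $-\cosh$, or $\pm\cos$ of the relevant quantities, and verifying that the messy expansion collapses to exactly $ab+cd+ace+bde+\sqrt{\Delta_{ade}}\sqrt{\Delta_{bce}}$ over $e^2-1$ — this is the step the excerpt flags by promising attached code, and I would lean on a symbolic check there rather than a hand computation. The conceptual content — that the flip is a local surgery supported on one hinge, that the resulting configuration is again a right-angled hexagon decomposition, and that the square-root signs are forced by orientation — is short; the algebra is not. Once \eqref{f} is in hand for a single flip, the final paragraph of the statement is formal: by the Hatcher/Mosher/Penner connectivity of the flip graph of $(S,V)$ any two triangulations $\mathcal{T},\mathcal{T}''$ are joined by a finite sequence of flips, $\Omega_{\mathcal{T}''}^{-1}\circ\Omega_{\mathcal{T}}$ is the composition of the corresponding one-flip transition maps, and each factor has the displayed form; since each $\Omega_{\mathcal{T}}$ is a homeomorphism by Theorem \ref{teich home}, the composite is well defined independently of the chosen flip sequence.
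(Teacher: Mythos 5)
Your proposal is sound in outline, but note first that this paper does not actually prove Theorem \ref{teich home formula}: it is recalled from \cite{zhu2023existence}, so the comparison is with the route that paper's framework suggests rather than with a proof printed here. That route is trigonometric: develop the hinge into the right-angled octagon of Figure \ref{fig:4hyp} (two right-angled hexagons glued along the orthogeodesic realizing $e_{ij}$, not a single hexagon as your first paragraph says), and derive the generalized Ptolemy relation \eqref{abcdef} from the right-angled hexagon cosine law \eqref{cosh} together with an angle-addition identity along the common perpendicular, exactly in the style of Lemma \ref{dfdF}, with the polynomial collapse checked by computer algebra. Your Lorentz-model derivation is genuinely different and arguably cleaner: representing $\gamma_i,\gamma_j,\gamma_k,\gamma_l$ by spacelike unit normals in $\mathbb{R}^{2,1}$, the vanishing of the $4\times4$ Gram determinant (four vectors in a three-dimensional space) is precisely \eqref{abcdef}, and expanding $n_k,n_l$ in the basis $\{n_i,n_j,n_{ij}\}$ produces the cross term $\sqrt{\Delta_{ade}}\sqrt{\Delta_{bce}}/(e^2-1)$ directly, since $\det\mathrm{Gram}(n_i,n_j,n_k)=-\Delta_{ade}$ and $\det\mathrm{Gram}(n_i,n_j)=1-e^2$. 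What your approach buys is that the discriminants $\Delta$ appear with a structural explanation rather than emerging from a symbolic simplification; what it costs is the sign bookkeeping you acknowledge. Your handling of the unflipped edges (intrinsic definition of the length coordinate in Definition \ref{Oemga}) and of the last paragraph (flip-graph connectivity plus Theorem \ref{teich home}) is correct and complete.

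The one place where your argument is not yet a proof is the selection of the root. The quadratic \eqref{abcdef} has two positive solutions $f_{\pm}$, corresponding to the two ways of developing $f_{ijl}$ across the geodesic containing $e_{ij}$ (embedded hinge versus folded-over configuration), and the statement "the contributions add because the faces lie on opposite sides" is the right heuristic but needs to be run through the signature: with consistent inward normals one has $\langle n_k,n_l\rangle=-f$, the $n_{ij}$-components of $n_k$ and $n_l$ have opposite signs precisely because $\gamma_k,\gamma_l$ lie on opposite sides of that geodesic, and since $\langle n_{ij},n_{ij}\rangle=+1$ this makes $\langle n_k,n_l\rangle$ more negative, i.e.\ forces the larger root $f_{+}$ of \eqref{f}. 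Spelling this out (or, equivalently, checking that $f_{-}$ corresponds to the reflected, non-embedded hinge) is necessary; without it the formula is only determined up to the sign of the square root. A minor remaining check is that $f>1$, so that $\arcch f$ is defined, which follows from $\gamma_k$ and $\gamma_l$ being disjoint geodesics admitting a common perpendicular in the developed picture.
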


\begin{figure}[ht]
    \centering
    \includegraphics{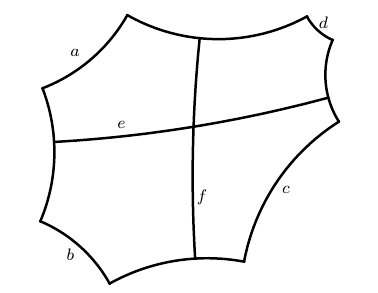}
    \caption{Notation in a hyperbolic right angle octagon related with a flipping hinge.}
    \label{fig:4hyp}
\end{figure}    

The \emph{generalized Ptolemy equation} below is the quadratic equation to which $f$ in the theorem above satisfies.
\begin{equation}\label{abcdef}
    \begin{aligned}
        a^2 + b^2 + c^2 + d^2 + e^2 + f&^2\\
        + 2 (ade + bce + abf + cdf + abcd + acef + bdef&)\\
        - a^2 c^2 - b^2 d^2 - e^2 f^2 - 1& = 0.
    \end{aligned}
\end{equation}
Moreover, we have
\begin{equation}\label{delta}
    \begin{aligned}
    \sqrt{\Delta _{a b f}}&=\frac{(d+a e) \sqrt{\Delta _{b c e}}+(c+b e) \sqrt{\Delta _{a d e}}}{e^2-1},\\	
    \sqrt{\Delta _{c d f}}&=\frac{(a+d e) \sqrt{\Delta _{b c e}}+(b+c e) \sqrt{\Delta _{a d e}}}{e^2-1}.
    \end{aligned}
\end{equation}

\subsection{Weighted Voronoi-Delaunay duality}
In the paper \cite{zhu2019discrete}, Zhu proved that weighted Voronoi decomposition on hyperbolic polyhedral surface is a unique CW decomposition by revising the definition of Voronoi cell. Also, he proved that the existence and uniqueness of weighted Delaunay triangulation on a hyperbolic polyhedral surface by using a construction of the isotopy cover map.

Given a hyperbolic polyhedral surface $(S,V,d_h)$, define radii by $\mathbf{r} \colon V \to \mathbb{R}_{>0},\, v_i \mapsto r_i$ or $\mathbf{r} \in \mathbb{R}_{>0}^V$ as weight, the domain of the weight is defined to be
\[
    R = \{\,\mathbf{r} \in \mathbb{R}_{>0}^V \mid
    0<r_i<\mathrm{Inj}(v_i) \mbox{ and }
    \forall i \ne j,r_i+r_j<d_h(v_i,v_j) \,\}, 
\]
where $\mathrm{Inj}$ indicates the injective radius.

\begin{defn}	
    Given a hyperbolic polyhedral surface $(S,V,d_h)$, and a weight function $\mathbf{r} \in R$, the \emph{inner weighted Voronoi cell} of $v_i$, denoted by $Vor_h(v_i)$, is defined to be the set of all $p\in S$ satisfying that 
    \begin{itemize}
        \item there exists a unique shortest geodesic on $S$ connecting $p$ and $v_i$, and
        \item for any $j\neq i$,
        \[
            \frac{\cosh d_h(p,v_i)}{\cosh r_i}<\frac{\cosh d_h(p,v_j)}{\cosh r_j}.
        \]
    \end{itemize}    
\end{defn}

\begin{thm}\label{vor_f}		
    There exists a unique CW decomposition of $S$, called the \emph{weighted Voronoi decomposition}, of which the set of all the $2$-cells is $\{\, Vor_h(v_i) \mid v_i \in V \,\}$.
\end{thm}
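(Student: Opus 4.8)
The plan is to recognize the partition $\{Vor_h(v_i)\}$, read in suitable local charts, as an ordinary \emph{power diagram} in $\mathbb{H}^2$ — for which the cell-decomposition statement is classical — and then to globalize that local picture on $S$ by means of the isotopy cover map of \cite{zhu2019discrete}. First I would pass to the hyperboloid model $\mathbb{H}^2 = \{\,x \in \mathbb{R}^{2,1} \mid \langle x,x\rangle = -1,\ x_0 > 0\,\}$, in which $\cosh d(x,y) = -\langle x,y\rangle$. About a point $p \in S \setminus V$ there is a geodesic ball isometric to a ball in $\mathbb{H}^2$, and about a vertex $v_i$, since $r_i < \mathrm{Inj}(v_i)$, a neighborhood isometric to a neighborhood of the tip of the hyperbolic cone of angle $\varphi_i$, which develops into $\mathbb{H}^2$ off the tip. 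In such a chart, for each $v_j$ the function $q \mapsto \cosh d_h(q, v_j)$ agrees, away from the cut locus of $v_j$, with the affine function $q \mapsto -\langle \tilde q, \tilde v_j\rangle$; setting $\hat v_j = \tilde v_j / \cosh r_j$, so that $h_j(q) \coloneqq \cosh d_h(q,v_j)/\cosh r_j = -\langle \tilde q, \hat v_j\rangle$, each weighted bisector $\{h_i = h_j\}$ is a sub-arc of the geodesic $\{\langle \tilde q, \hat v_i - \hat v_j\rangle = 0\}$ and each $\{h_i \le h_j\}$ is a geodesic half-plane. Hence, locally, the partition of $S$ by ``which $h_i$ is smallest'' is exactly the power diagram of the finite weighted point set $\{\hat v_j\}$: its $2$-cells are convex geodesic polygons, its $1$-cells geodesic arcs, and its $0$-cells isolated points.

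Next I would fit the vertices into this picture and show each cell is an embedded disk. Because $r_i + r_j < d_h(v_i,v_j)$ gives $h_i(v_i) = 1/\cosh r_i < 1 < \cosh d_h(v_i,v_j)/\cosh r_j = h_j(v_i)$ for all $j \ne i$, a whole neighborhood of $v_i$ lies in $Vor_h(v_i)$; in particular no bisector reaches $v_i$, and near $v_i$ the cell is an ordinary cone disk. For the shape of the whole cell I would prove that $Vor_h(v_i)$ is star-shaped with respect to the geodesic rays issuing from $v_i$: along such a ray $\gamma$ with $\gamma(0) = v_i$ one computes, in a developing chart, $h_i(\gamma(t)) = \cosh t / \cosh r_i$ and $h_j(\gamma(t)) = (\alpha_j \cosh t + \beta_j \sinh t)/\cosh r_j$, so the difference $h_j(\gamma(t)) - h_i(\gamma(t))$ has the form $A\cosh t + B\sinh t$, which changes sign at most once for $t \ge 0$; since it is positive at $t = 0$, the set $\{\,t \ge 0 \mid \gamma(t) \in Vor_h(v_i)\,\}$ is an interval. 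As every point of $Vor_h(v_i)$ lies on its shortest geodesic to $v_i$ (unique, by the first clause of the definition), the cell is swept out by these sub-rays and is therefore a topological open disk.

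The remaining, and main, task is to globalize these local facts on a surface of genus $g$: a locally convex-looking region need not a priori be an embedded disk, and the rays sweeping out a cell need not stay in one developing chart — they may pass other cone points, where the hyperboloid computation is no longer literally valid, or self-intersect. I would handle this exactly as in \cite{zhu2019discrete}, through the isotopy cover map: develop the vertex stars and, more globally, pull the whole configuration back to the metric completion of a suitable cover of $S \setminus V$, so that the weighted points and their power diagram become a genuine, locally finite, equivariant power diagram in $\mathbb{H}^2$, where the cells are convex disks, the bisectors form an embedded geodesic graph, and the union is a CW complex; projecting back down, the hypothesis $\mathbf{r} \in R$ (notably $r_i < \mathrm{Inj}(v_i)$ and $r_i + r_j < d_h(v_i,v_j)$) makes the projection injective on each cell and on the $1$-skeleton, yielding a CW decomposition of $S$ whose $2$-cells are exactly $\{Vor_h(v_i)\}$. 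Uniqueness is then formal: the open $2$-cells $Vor_h(v_i)$ are prescribed outright by the strict inequalities, so $S \setminus \bigcup_i Vor_h(v_i)$ — the set of points with a non-unique closest weighted vertex, or a non-unique shortest geodesic to it — is forced to be the $1$-skeleton, whose cell structure is then determined by the local power-diagram picture. I expect the control of the sweeping rays and the disjointness of the projected cells to be the only genuinely non-soft point; everything else is the standard convex geometry of power diagrams together with the elementary sign count for $A\cosh t + B\sinh t$.
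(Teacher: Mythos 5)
Your proposal is correct in outline and follows essentially the same route the paper attributes to \cite{zhu2019discrete}: read the cells locally as a hyperbolic power diagram with geodesic bisectors in the hyperboloid model, show each cell is a star-shaped disk about its vertex via the at-most-one-sign-change property of $A\cosh t+B\sinh t$, and globalize through the developing/isotopy cover construction. The present paper states Theorem \ref{vor_f} without proof, so beyond noting that your sketch — including your correct identification of the cover-and-project step as the only delicate point — matches the cited argument, there is nothing further to compare.
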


\begin{defn}\label{del_def}
    Given a hyperbolic polyhedral surface $(S,V,d_h)$ with weight $\mathbf{w} \in W$, there exist a unique CW decomposition whose $1$-cells are geodesics connecting vertices, called \emph{weighted Delaunay tessellation}, to be the dual graph of the weighted Voronoi decomposition.

    The \emph{weighted Delaunay triangulation} is to subdivide hyperbolic polygon faces of the weighted Delaunay tessellation into triangles by connecting some geodesic diagonals in any ways.
\end{defn}

Suppose the corresponding weighted Delaunay triangulation is $\mathcal{T}=(V,E,F)$, for any face $f_{ijk} \in F$, we denote the dual of $f_{ijk}$ on $S$ by $O_{ijk}$, which is a $0$-cell of the weighted Voronoi decomposition. 

There is a circle, called the \emph{orthogonal circle}, denoted by $\odot O_{ijk}$, orthogonal to three circles centered at $v_i,v_j,v_k$ with radius $r_i$, $r_j$ and $r_i$, and centered at $O_{ijk}$ with radius $\rho_{ijk}>0$, such that
\[
    \cosh\rho_{ijk}=\frac{\cosh d_h(O_{ijk},v_i)}{\cosh r_i}
    =\frac{\cosh d_h(O_{ijk},v_j)}{\cosh r_j}=\frac{\cosh d_h(O_{ijk},v_k)}{\cosh r_k}
\]

Note that these notations above also hold when $\mathcal{T}$ is only geodesic but not weighted Delaunay, except that $O_{ijk}$ may not exist on $S$. For this case we just immerse the local part in to $\mathbb{E}^2$ to find $O_{ijk}$.

\begin{defn}\label{loc_del}
    Given a face $f_{ijk}$, we denote the distance between $O_{ijk}$ and edge $e_{ij}$ by $h_{ij,k}$, which is positive when $O_{ijk}$ and $v_k$ are on the same side of $e_{ij}$, and negative when they are on the different sides.
    
    An edge $e_{ij}$ is called to be \emph{local weighted Delaunay} if $h_{ij,k} + h_{ij,l} \ge 0$.
\end{defn}

\begin{thm}\label{local_whole_h}
    Given a  piecewise flat surface $(S,V,d_h)$ with a weight function $\mathbf{r} \in R$, we know that:
    \begin{itemize}
        \item The weighted Delaunay triangulation exists.
        \item The triangulation is weighted Delaunay, if and only if all the edges are local weighted Delaunay.
        \item The weighted Delaunay triangulation is unique up to finite diagonal switches.
    \end{itemize}
\end{thm}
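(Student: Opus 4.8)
The plan is to reduce all three claims to the uniqueness of the weighted Voronoi decomposition (Theorem~\ref{vor_f}), the Voronoi--Delaunay duality, and one local hyperbolic-trigonometric identity for a hinge; this parallels the Euclidean argument of \cite{zhu2023existence} and the thesis \cite{zhu2019discrete}. For \emph{existence}, dualize the weighted Voronoi decomposition: join $v_i$ to $v_j$ by the geodesic crossing the $1$-cell of the weighted Voronoi decomposition separating $Vor_h(v_i)$ and $Vor_h(v_j)$, which lies on the equal-power locus $\cosh d_h(\cdot,v_i)/\cosh r_i=\cosh d_h(\cdot,v_j)/\cosh r_j$. One checks that these geodesics are disjointly embedded and cut $S$ into geodesic polygons, one around each Voronoi $0$-cell; this is the weighted Delaunay tessellation of Definition~\ref{del_def}, and adding non-crossing geodesic diagonals inside each polygon yields a weighted Delaunay triangulation with vertex set $V$.

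The crux is the second item, and both directions rest on the identity
\[
    h_{ij,k}+h_{ij,l}\ge 0
    \iff
    \frac{\cosh d_h(O_{ijk},v_l)}{\cosh r_l}\ge\cosh\rho_{ijk},
\]
namely that $e_{ij}$ is local weighted Delaunay precisely when the vertex $v_l$ across $e_{ij}$ lies on or outside the orthogonal circle $\odot O_{ijk}$. I would prove it in the Minkowski model $\mathbb{H}^2\subset\mathbb{R}^{2,1}$: lifting each $v_i$ to $v_i/\cosh r_i$ turns the power $\cosh d_h(p,v_i)/\cosh r_i$ into a linear functional of (the lift of) $p$, the orthogonal circle of a face into the affine hyperplane through its three lifted vertices, and $h_{ij,\bullet}$ into a signed distance recording which side of that hyperplane the corresponding lifted vertex lies on; the revised weighted Delaunay inequality is exactly what keeps $\odot O_{ijk}$ compact, so that this ``side'' is well defined. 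Granting the identity, the forward direction is immediate: if $\mathcal{T}$ is weighted Delaunay then each $O_{ijk}$ is a Voronoi $0$-cell, hence $\cosh d_h(O_{ijk},v_m)/\cosh r_m\ge\cosh\rho_{ijk}$ for all $v_m\in V$, in particular $v_l$ lies on or outside $\odot O_{ijk}$ and so $h_{ij,k}+h_{ij,l}\ge 0$.

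For the converse, suppose every edge of a geodesic triangulation $\mathcal{T}$ is local weighted Delaunay. The lifts of the faces of $\mathcal{T}$ assemble into a polyhedral surface in $\mathbb{R}^{2,1}$ which is a graph over $\mathbb{H}^2$ in the radial direction, and the local weighted Delaunay condition at each edge says exactly that this surface is locally convex there; local convexity of such a graph forces global convexity, so for every face $f_{ijk}$ the hyperplane through its lifted vertices supports the entire lifted vertex set, i.e. $\cosh d_h(O_{ijk},v_m)/\cosh r_m\ge\cosh\rho_{ijk}$ for all $v_m\in V$. Hence each $O_{ijk}$ is a Voronoi $0$-cell, $\mathcal{T}$ refines the weighted Delaunay tessellation, and $\mathcal{T}$ is a weighted Delaunay triangulation. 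For \emph{uniqueness}, Theorem~\ref{vor_f} makes the weighted Voronoi decomposition, hence its dual tessellation, unique; any weighted Delaunay triangulation is a choice of non-crossing geodesic diagonals inside the finitely many polygonal faces of this fixed tessellation, any two such choices on one polygon are related by finitely many diagonal flips, and each flip --- occurring inside a single tessellation face --- preserves all the local weighted Delaunay inequalities, with equality on the flipped edge because its four endpoints lie on the orthogonal circle of that face. This gives uniqueness up to finitely many diagonal switches.

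The main obstacle is the converse of the second item: upgrading ``locally Delaunay at every hinge'' to ``globally empty orthogonal circle'', together with the underlying identity between the sign of $h_{ij,k}+h_{ij,l}$ and the orthogonal-circle inclusion. In the Euclidean case this is packaged by the convexity of the lifted Voronoi height function over a paraboloid; in the hyperbolic polyhedral case one must instead work in the Minkowski model and first secure compactness of the orthogonal circles through the revised weighted Delaunay inequality before the convexity (side-of-a-hyperplane) argument is even meaningful.
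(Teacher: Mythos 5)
Your overall architecture matches what the paper intends: items (1) and (3) are read off from the weighted Voronoi decomposition (Theorem~\ref{vor_f}, Definition~\ref{del_def}), and the whole weight of the theorem sits in item (2), whose two directions you correctly reduce to the equivalence between $h_{ij,k}+h_{ij,l}\ge 0$ and the fourth vertex lying on or outside $\odot O_{ijk}$. The paper itself does not write this out; it only asserts that the methods of \cite{bobenko2007discrete} and \cite{gorlina2011weighted} generalize to the hyperbolic case, so your attempt to supply an actual argument is the right instinct.

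However, your local-to-global step contains a genuine gap. You claim that the Minkowski lifts of the faces of $\mathcal{T}$ ``assemble into a polyhedral surface in $\mathbb{R}^{2,1}$ which is a graph over $\mathbb{H}^2$,'' and then invoke ``local convexity implies global convexity.'' On a closed hyperbolic polyhedral surface with cone points there is no global developing map into $\mathbb{H}^2$: the cone angles are not $2\pi$ and the holonomy is nontrivial, so the faces cannot be coherently lifted to a single graph over $\mathbb{H}^2$, and the global convexity principle you appeal to has no ambient space in which to operate. The argument that actually works --- and is the one in the references the paper cites --- is a path-development induction: fix a face $f_{ijk}$ and a vertex $v_m$, take a geodesic from $O_{ijk}$ to $v_m$, develop into $\mathbb{H}^2$ the finite sequence of triangles this geodesic crosses, and show by induction on the number of crossed edges, using the local weighted Delaunay inequality at each crossing (plus the compactness of the orthogonal circles, which, as you note, must be secured first via Theorem~\ref{del_in_trig_h}), that the power $\cosh d(\cdot,v_m)/\cosh r_m$ of the developed image of $v_m$ is at least $\cosh\rho_{ijk}$. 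Only the local, per-hinge part of your convexity picture is valid; the global step must be replaced by this propagation argument. The rest of your proposal (existence by dualizing the Voronoi decomposition, uniqueness from the uniqueness of that decomposition together with diagonal flips inside tessellation faces, where the flipped edge satisfies the Delaunay inequality with equality) is sound and consistent with what the paper takes for granted.
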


The first and third statements are obviously true based on the analog of Euclidean case. For the second statement, the methods presented in the papers \cite{bobenko2007discrete} and \cite{gorlina2011weighted} can be directly generalized to the hyperbolic case.

We recall the notations mentioned in \cite{zhu2023existence} on the weighted Voronoi-Delaunay duality on hyperbolic surface with geodesic boundaries.

\begin{lem}\label{gbh_sinh}
    Let $r_1,r_2>0$ and $\gamma_1,\gamma_2$ be two geodesics that do not intersect in $\mathbb{H}^2$, then the set of all points $q \in \mathbb{H}^2$ satisfied
    \begin{equation}\label{vor_sinh}
        r_1 \sinh d(q,\gamma_1)=r_2 \sinh d(q,\gamma_2),
    \end{equation}
    is a geodesic, which is orthogonal to the geodesic segment connecting $\gamma_1,\gamma_2$.
\end{lem}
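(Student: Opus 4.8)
\emph{Proof sketch.} The plan is to pass to a linear model of $\mathbb{H}^2$ and reduce the defining equation to a linear condition in the Lorentzian inner product. Work in the hyperboloid model $\mathbb{H}^2=\{x\in\mathbb{R}^{2,1}\mid\langle x,x\rangle=-1,\ x_0>0\}$ with $\langle x,y\rangle=-x_0y_0+x_1y_1+x_2y_2$. Every complete geodesic is $\gamma_v:=v^{\perp}\cap\mathbb{H}^2$ for a spacelike unit vector $v$ (its \emph{pole}), and $\sinh d(q,\gamma_v)=|\langle q,v\rangle|$ for $q\in\mathbb{H}^2$; with a choice of side this becomes the \emph{signed} identity $\sinh d(q,\gamma_v)=\langle q,v\rangle$. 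Since $\gamma_1,\gamma_2$ are disjoint they are ultraparallel, so $|\langle v_1,v_2\rangle|>1$; after normalizing $v_1,v_2$ so that each points into the region between the two geodesics — equivalently, placing the common perpendicular $\sigma$ in standard position and computing directly — one obtains $\langle v_1,v_2\rangle=-\cosh\ell$, where $\ell=d(\gamma_1,\gamma_2)>0$.

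With these normalizations the equation \eqref{vor_sinh} becomes $r_1\langle q,v_1\rangle=r_2\langle q,v_2\rangle$, that is $\langle q,\,r_1v_1-r_2v_2\rangle=0$. (If one insists on the unsigned distance, squaring shows the locus is $\langle q,r_1v_1-r_2v_2\rangle\,\langle q,r_1v_1+r_2v_2\rangle=0$, and the component that actually separates $\gamma_1$ from $\gamma_2$ is the one cut out by $w:=r_1v_1-r_2v_2$.) Now
\[
\langle w,w\rangle=r_1^2+r_2^2-2r_1r_2\langle v_1,v_2\rangle=r_1^2+r_2^2+2r_1r_2\cosh\ell>0,
\]
so $w$ is spacelike, hence $\gamma_0:=w^{\perp}\cap\mathbb{H}^2$ is a genuine complete geodesic, which is the asserted locus.

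It remains to establish orthogonality to $\sigma$. Let $u$ be the pole of $\sigma$. Since $\sigma$ meets both $\gamma_1$ and $\gamma_2$ orthogonally and orthogonality of two crossing geodesics is equivalent to orthogonality of their poles, $\langle u,v_1\rangle=\langle u,v_2\rangle=0$, whence $\langle u,w\rangle=0$; as $|\langle u,w\rangle|=0<1$ the geodesics $\gamma_0$ and $\sigma$ do intersect, and they meet at angle $\pi/2$ because $\langle u,w\rangle=0$. Finally, parametrize $\sigma$ by arclength $t$ with $\gamma_1\cap\sigma$ at $t=0$ and $\gamma_2\cap\sigma$ at $t=\ell$; the restriction of \eqref{vor_sinh} to $\sigma$ reads $r_1\sinh t=r_2\sinh(\ell-t)$, and since the left side increases from $0$ while the right side decreases to $0$ on $[0,\ell]$, there is a unique root $t^{*}\in(0,\ell)$, so $\gamma_0$ crosses $\sigma$ in the interior of the segment joining $\gamma_1$ and $\gamma_2$.

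The delicate point is the sign bookkeeping in the first step: with the unsigned distance the raw locus factors into two linear conditions, and one must verify that the second factor $r_1v_1+r_2v_2$ does not contribute a spurious extra geodesic in the range of interest — equivalently that $\langle r_1v_1+r_2v_2,\,r_1v_1+r_2v_2\rangle=r_1^2+r_2^2-2r_1r_2\cosh\ell\le 0$, which holds automatically when $r_1=r_2$ and, more generally, under the separation hypotheses imposed for the weighted Delaunay condition. Everything else is the routine Lorentzian linear algebra of the pole description, so I expect isolating the correct branch to be the only thing requiring care.
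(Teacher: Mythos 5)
The paper does not actually prove this lemma --- it is recalled without proof from the companion paper --- so there is no in-paper argument to compare with, and I judge your proposal on its own. Your Lorentzian setup is correct and efficient: with the poles normalized so that $\langle v_1,v_2\rangle=-\cosh\ell$, the vector $w=r_1v_1-r_2v_2$ is always spacelike, $w^\perp\cap\mathbb{H}^2$ is a geodesic meeting the common perpendicular $\sigma$ orthogonally (since $\langle u,v_1\rangle=\langle u,v_2\rangle=0$ forces $\langle u,w\rangle=0$), and the intermediate-value argument on $\sigma$ places the crossing strictly between $\gamma_1$ and $\gamma_2$. All of that is right.

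The ``delicate point'' you flag at the end, however, is a genuine unresolved gap, and not a mere bookkeeping issue: the condition $\langle r_1v_1+r_2v_2,\,r_1v_1+r_2v_2\rangle=r_1^2+r_2^2-2r_1r_2\cosh\ell\le 0$ is equivalent to $|\log(r_1/r_2)|\le\ell$, and the lemma's hypotheses ($r_1,r_2>0$ arbitrary, $\gamma_1,\gamma_2$ merely disjoint) do not guarantee it. When $r_1/r_2>e^{\ell}$ (say $r_1=100$, $r_2=1$, $\ell=1$), the vector $r_1v_1+r_2v_2$ is spacelike and its orthogonal geodesic is a second, disjoint component of the locus \eqref{vor_sinh}, lying on the far side of $\gamma_1$: one checks directly that there is a point $q$ beyond $\gamma_1$ with $\sinh d(q,\gamma_1)=\varepsilon$ and $\sinh d(q,\gamma_2)=100\varepsilon$. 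So the set defined by \eqref{vor_sinh} is then a union of two geodesics and the statement as written fails; your appeal to ``separation hypotheses imposed for the weighted Delaunay condition'' cannot be invoked, since no such hypothesis appears in the lemma. To close the gap you must either (i) add the hypothesis $e^{-\ell}\le r_1/r_2\le e^{\ell}$ and verify it holds wherever the lemma is applied (the weights in the paper are hyperbolic tangents of cone radii, which does not obviously imply it), or (ii) restate the conclusion as: the component of the locus separating $\gamma_1$ from $\gamma_2$ is a geodesic orthogonal to $\sigma$ --- which is exactly what your signed computation proves and is all the weighted Voronoi construction needs.
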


\begin{defn}
    Given a hyperbolic surface $(\Sigma,d)$ with geodesic boundaries $\partial \Sigma=\{\gamma_i,\dots,\gamma_n\}$ and a weight function $\mathbf{r} \in \mathbb{R}^V_{>0}$, 
    the \emph{inner weighted Voronoi cell} of $v_i$ in this case, denoted by $Vor(\gamma_i)$, is defined to be the set of all $p \in S$ such that there exists a unique geodesic connecting $p$ and $\gamma_i$ whose length is the distance $d(p,\gamma_i)$, and for any $j \ne i$, the inequality
    \[
        r_i \sinh d(v_i,\gamma_i)<r_j \sinh d(v_j,\gamma_j)
    \]
    holds.
\end{defn}

\begin{thm}\label{gbh_vor}
    Let $(S,V)$ be the related surface of $\Sigma$, namely, $\Sigma = S \setminus \bigcup_{i=1}^n D_i$ where $D_i$ are disjoint open disks containing $v_i$. Given a weight function $\mathbf{r} \in \mathbb{R}^V_{>0}$, there exists a CW decomposition of $S$, called \emph{weighted Voronoi decomposition}, with $\{\, Vor(\gamma_i) \cup D_i \mid i=1,\dots,n \,\}$ as $2$-cells. This decomposition is unique restricted on $\Sigma$.
\end{thm}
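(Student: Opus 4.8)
The plan is to realize this decomposition as a hyperbolic ``power diagram'' and to run the argument of the closed polyhedral case of Theorem~\ref{vor_f} (carried out in \cite{zhu2019discrete}), with the cone points $v_i$ replaced by the boundary geodesics $\gamma_i$ and the function $\cosh d_h(\cdot,v_i)/\cosh r_i$ replaced by $p\mapsto r_i\sinh d(p,\gamma_i)$. First I would pass to the universal cover: since $\partial\Sigma$ is geodesic, the developing map embeds $\widetilde\Sigma$ as a geodesically convex region $\Omega\subseteq\mathbb{H}^2$ whose frontier is a locally finite, pairwise ultraparallel family of complete geodesics $\{\widetilde\gamma_\alpha\}$, each covering some $\gamma_{i(\alpha)}$, and the deck group $\Gamma\cong\pi_1(\Sigma)$ acts on $\Omega$ by isometries permuting the $\widetilde\gamma_\alpha$ and preserving $\alpha\mapsto i(\alpha)$. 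In the hyperboloid model, writing $n_\alpha$ for the unit spacelike normal of $\widetilde\gamma_\alpha$ that points into $\Omega$, one has $\sinh d(p,\widetilde\gamma_\alpha)=\langle p,n_\alpha\rangle$ with $\langle\cdot,\cdot\rangle$ the Minkowski form, so each $\delta_\alpha(p):=r_{i(\alpha)}\sinh d(p,\widetilde\gamma_\alpha)$ is the restriction to $\Omega$ of a linear functional, and $\delta_\alpha-\delta_\beta$ is linear with zero set the bisector geodesic of Lemma~\ref{gbh_sinh}. I would then set $\widetilde{Vor}(\widetilde\gamma_\alpha):=\{\,p\in\Omega\mid\delta_\alpha(p)\le\delta_\beta(p)\ \text{for all }\beta\,\}$, an intersection of closed half-planes and hence geodesically convex.

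Next I would check that these cells tessellate $\Omega$ in a locally finite, $\Gamma$-equivariant way. For fixed $p\in\Omega$ only finitely many $\widetilde\gamma_\alpha$ meet a given ball about $p$, so $d(p,\widetilde\gamma_\alpha)\to\infty$; as the weights take finitely many values, $\delta_\alpha(p)\to\infty$, hence $\inf_\alpha\delta_\alpha(p)$ is attained, and attained within any $\varepsilon$ by only finitely many indices. Thus $\Omega=\bigcup_\alpha\widetilde{Vor}(\widetilde\gamma_\alpha)$, two distinct cells overlap only along a bisector geodesic, and the configuration is locally finite; being $\Gamma$-invariant it descends to $\Sigma$. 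Since $d(p,\gamma_i)\to0$ as $p\to\gamma_i$ while $d(p,\gamma_j)$ stays bounded below for $j\ne i$, a collar of $\gamma_i$ lies in the projected cell of $\widetilde\gamma_\alpha$ with $i(\alpha)=i$, and one identifies this projection with the set $Vor(\gamma_i)$ of the preceding definition.

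Then I would extract the CW structure: the $0$-cells are the projected tessellation vertices, the $1$-cells the projected open bisector arcs, and the $2$-cells the $Vor(\gamma_i)$, the incidences being controlled by convexity and local finiteness. The essential point is that each $2$-cell is an honest cell: fixing a lift $\widetilde\gamma_{\alpha_0}$ of $\gamma_i$ with (cyclic) stabilizer $\Gamma_i<\Gamma$, the convex $\Gamma_i$-invariant region $\widetilde{Vor}(\widetilde\gamma_{\alpha_0})$ contains $\widetilde\gamma_{\alpha_0}$ and a collar of it, so $\widetilde{Vor}(\widetilde\gamma_{\alpha_0})/\Gamma_i$ is an annulus bounded on one side by $\gamma_i$; and since every interior point of $\widetilde{Vor}(\widetilde\gamma_{\alpha_0})$ has $\widetilde\gamma_{\alpha_0}$ as its \emph{unique} weighted-nearest lift, the induced map $\widetilde{Vor}(\widetilde\gamma_{\alpha_0})/\Gamma_i\to\Sigma$ is injective on interiors, so $\overline{Vor(\gamma_i)}\subset\Sigma$ is an embedded annulus. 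Gluing the disk $D_i$ back along $\gamma_i$ turns this annulus into a $2$-cell of $S$ containing $v_i$, and adjoining $D_i$ to the matching cell yields the asserted CW decomposition of $S$. Uniqueness on $\Sigma$ is then formal: the functions $r_i\sinh d(\cdot,\gamma_i)$, and hence the sets $Vor(\gamma_i)$ and the induced arcs and vertices, are intrinsic to $(\Sigma,d,\mathbf r)$, so any decomposition with the prescribed $2$-cells restricts to this one on $\Sigma$; only the structure placed on the $D_i$ is a choice.

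I expect the main obstacle to be exactly the embeddedness/cell-structure step, i.e.\ turning the equivariant tessellation of the convex domain $\Omega$ into a genuine CW decomposition of $S$ whose boundary $2$-cells are \emph{embedded} annuli rather than merely immersed ones. This is where one needs the hyperbolic cut-locus analysis and the isotopy-lift construction of \cite{zhu2019discrete}, adapted to the functional $r_i\sinh d(\cdot,\gamma_i)$, together with the fact---provided by Lemma~\ref{gbh_sinh}---that the weighted bisectors are geodesics. Once embeddedness and local finiteness are in hand, the CW axioms and the covering/extremum argument of the second paragraph are routine.
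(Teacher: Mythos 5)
Your proposal is sound, but note that the paper itself offers no proof of Theorem \ref{gbh_vor}: it is recalled verbatim from \cite{zhu2023existence} and the thesis \cite{zhu2019discrete}, where the argument is organized around a cut-locus analysis and an ``isotopy cover map'' construction rather than the route you take. Your route --- pass to the universal cover, where $\widetilde\Sigma$ embeds as a convex domain $\Omega\subset\mathbb{H}^2$ bounded by pairwise ultraparallel geodesics, and observe in the hyperboloid model that $r_{i(\alpha)}\sinh d(\cdot,\widetilde\gamma_\alpha)=r_{i(\alpha)}\langle\cdot,n_\alpha\rangle$ is linear --- is a genuine power-diagram argument. It buys you two things for free that the cited treatment has to work for: the cells are intersections of half-planes and hence geodesically convex, and Lemma \ref{gbh_sinh} becomes the statement that $r_1n_\alpha-r_2n_\beta$ is spacelike when $\langle n_\alpha,n_\beta\rangle=-\cosh d<-1$ (both normals pointing into $\Omega$), so the bisector is automatically a complete geodesic. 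Your key step --- that a deck transformation carrying one interior point of $\widetilde{Vor}(\widetilde\gamma_{\alpha_0})$ to another must preserve the unique weighted-nearest lift and hence lie in the cyclic stabilizer $\Gamma_i$ --- is in fact the complete embeddedness argument, not merely a pointer to it, and it also correctly accounts for the ``unique shortest geodesic'' clause in the definition of $Vor(\gamma_i)$: a point weighted-equidistant from two distinct lifts of the same $\gamma_i$ lands on a bisector and is excluded. Two small points to tighten: distinct cells can meet along a segment, ray, or point of the bisector rather than the whole geodesic, and local finiteness requires only that the $n$ weights are positive (they need not be distinct). With those caveats your sketch is correct and arguably more self-contained than the reference it would otherwise lean on.
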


\begin{thm}\label{gbh_unique}
    There exist a unique geodesic \emph{truncated} triangulation of $(\Sigma,d)$ up to finite diagonal switches with respect to $\mathbf{r} \in \mathbb{R}^V_{>0}$, called the \emph{weighted Delaunay triangulation}, to be the dual graph of the weighted Voronoi decomposition adding some geodesic diagonals into truncated polygons other than truncated triangle.
\end{thm}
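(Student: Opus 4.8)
The plan is to obtain the weighted Delaunay triangulation as the geometric dual of the weighted Voronoi decomposition supplied by Theorem~\ref{gbh_vor}, and then to reduce both the existence of a compatible geodesic triangulation and its uniqueness to combinatorial statements about triangulating polygons. First I would set up the dual cell complex: to each $2$-cell $Vor(\gamma_i)\cup D_i$ assign the vertex $v_i$; to each Voronoi $1$-cell shared by $Vor(\gamma_i)$ and $Vor(\gamma_j)$ assign a $1$-cell joining $v_i$ and $v_j$; to each Voronoi $0$-cell (a point where three weighted Voronoi cells meet) assign a $2$-cell. By Lemma~\ref{gbh_sinh} the Voronoi $1$-cell between $\gamma_i$ and $\gamma_j$ is perpendicular to the common perpendicular geodesic segment of $\gamma_i$ and $\gamma_j$; this common perpendicular is exactly the orthogeodesic required of an edge of a truncated triangulation, so the dual $1$-cells are realized by honest geodesic arcs orthogonal to $\partial\Sigma$. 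One then checks, by passing to the isotopy covering used for the closed case in \cite{zhu2019discrete} (or directly in $\mathbb{H}^2$ using the convexity of the functions $q\mapsto r_i\sinh d(q,\gamma_i)$), that these arcs are simple, pairwise disjoint in $\interior\Sigma$, and cut $\Sigma$ into truncated hyperbolic polygons; this produces the \emph{weighted Delaunay tessellation}, i.e.\ the dual graph in the statement.

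Second, I would upgrade the tessellation to a triangulation. Each truncated polygon with more than three boundary arcs in $\Gamma$ has to be subdivided by geodesic diagonals; for a diagonal joining two of its boundary components $\gamma_i,\gamma_k$ I would again take the common perpendicular orthogeodesic, and show -- using that the polygon is a convex region cut out by the weighted bisectors and that the $h$-type local weighted Delaunay quantities (the analog of Definition~\ref{loc_del} and Theorem~\ref{local_whole_h} for $\Sigma$) are nonnegative along its edges -- that such a diagonal stays inside the polygon and that any non-crossing family of them triangulates it. It also has to be verified that the resulting geodesic truncated triangulation is the restriction to $\Sigma$ of an honest triangulation of the related surface $(S,V)$; this follows because each diagonal's isotopy class determines it uniquely among orthogeodesics (the statement preceding Theorem~\ref{teich home}), and because filling the $D_i$ back in turns the truncated polygons into ordinary polygons with vertices at the $v_i$.

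Third, uniqueness. Since Theorem~\ref{gbh_vor} gives a weighted Voronoi decomposition that is unique when restricted to $\Sigma$, its dual weighted Delaunay tessellation is unique as well, so any weighted Delaunay triangulation is a geodesic refinement of this one fixed tessellation obtained by inserting diagonals into its non-triangular faces. Two such refinements differ only in the combinatorial choice of diagonals inside each polygon, and any two triangulations of a fixed polygon are connected by finitely many diagonal switches -- a standard induction on the number of sides (equivalently, connectedness of the flip graph of polygon triangulations) -- so the two triangulations of $\Sigma$ are connected by finitely many diagonal switches, each realized geometrically by replacing one common-perpendicular orthogeodesic by another inside a truncated quadrilateral.

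The main obstacle is the geometric embeddedness claim in the first step: proving that the dual $1$-cells, defined combinatorially from the Voronoi decomposition, are genuinely realized by simple pairwise-disjoint orthogeodesics that bound the truncated polygons, and that the subdividing diagonals do not escape their polygons. In the closed case this is handled in \cite{zhu2019discrete} by an isotopy-cover argument, and I expect the truncated case to need the same lifting to the universal cover together with the non-intersection of the geodesics produced by Lemma~\ref{gbh_sinh}; degenerate configurations -- a truncated ``bigon'' face, or a Voronoi $0$-cell of valence other than three -- must either be ruled out or absorbed into the ``up to diagonal switches'' clause.
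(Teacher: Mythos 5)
The paper does not actually prove Theorem \ref{gbh_unique}: it is recalled without proof from \cite{zhu2023existence} and \cite{zhu2019discrete}, where it is established by exactly the dual-of-weighted-Voronoi argument you outline (dual edges realized by orthogeodesics to $\partial\Sigma$ via Lemma \ref{gbh_sinh}, insertion of diagonals into non-triangular dual faces, and uniqueness of the tessellation combined with connectedness of the flip graph of polygon triangulations). Your sketch is therefore essentially the same approach as the cited source, and you correctly isolate the one genuinely nontrivial point --- the embeddedness and pairwise disjointness of the dual orthogeodesics --- which is precisely what the isotopy-cover construction in \cite{zhu2019discrete} handles.
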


\subsection{Inversive distance circle packing}

In the paper \cite{guo2011local}, Guo proved the local rigidity of inversive distance circle packing. We adopt the notations from this paper.

Recall that the inversive distance for two circles with the radii $r_1,r_2>0$ and the distance between their centers $l_{12}$ in $\mathbb{E}^2$, then their inversive distance is
\begin{equation}\label{inveuc}
    I_{12} \coloneqq \frac{l_{12}^2-r_1^2-r_2^2}{2r_1 r_2}.
\end{equation}
For the hyperbolic case, the definition is different.

\begin{defn}
    Given two circles in $\mathbb{H}^2$ with radii $r_1,r_2>0$, and the hyperbolic distance between their centers is $l_{12}$, then their \emph{inversive distance} is defined by
    \begin{equation}\label{invhyp}
        I_{12} \coloneqq \frac{\cosh l_{12}-\cosh r_1 \cosh r_2}{\sinh r_1 \sinh r_2}
    \end{equation}
\end{defn}
 
The definition also holds for two circles with centers located at vertices on a piecewise flat surface. If there is a geodesic connecting these two vertices without passing through other vertices, the definition remains valid. We only care the case of $I_{12}>1$ or $r_1+r_2<l_{12}$, and if $I_{12} \le 1$, we should study the intersection angle instead. For a geodesic that connects the same vertex without passing through other vertices, if the radius of the circle at that vertex is smaller than the injective radius at that point, it can be considered as two disjoint circles located at the two ends of this geodesic. Similarly, the inversive distance can be defined as well.

Given a hyperbolic polyhedral surface $(S,V,d_h)$ with a geodesic triangulation $\mathcal{T}=(V,E,F)$, suppose $\mathbf{I} \in \mathbb{R}_{>1}^{E(\mathcal{T})}$ and $\mathbf{r} \in \mathbb{R}_{>0}^V$, then we call the collection of every geodesic circles with radius $r_i$ centered at vertex $v_i \in V$ on $S$ as an \emph{inversive distance circle packing} with respect to the inversive distance $\mathbf{I}$, if every couple of circles between the geodesic edge $e_{ij} \in E$ has the inversive distance $I_{ij} \in \mathbf{I}$.

We give the inversive distance circle packing another notation. Given a  marked surface $(S,V)$ with a topological triangulation $\mathcal{T}$, we define the map
\[
    L_h \colon \mathbb{R}_{>1}^{E(\mathcal{T})} \times
	\mathbb{R}_{>0}^V \to \mathbb{R}_{>0}^{E(\mathcal{T})}
	\quad (\mathbf{I},\mathbf{r}) \mapsto \mathbf{l}
\]
where
\[
    \mathbf{l} \colon E(\mathcal{T}) \to \mathbb{R}_{>0} \quad
    e_{ij} \mapsto \arcch(\cosh r_i \cosh r_j + I_{ij} \sinh r_i \sinh r_j).
\]

If $L_h(\mathbf{I},\mathbf{r}) \in \mathbb{R}^{E(\mathcal{T})}_\Delta$, or the image of the map satisfies every triangle inequalities on all faces of $\mathcal{T}$, we take an element of the isotopy class of $\Phi_\mathcal{T} \circ L_f(\mathbf{I},\mathbf{r})$, called the hyperbolic polyhedral metric $d_h$. Then the triangulation $\mathcal{T}$ becomes a geodesic triangulation of the hyperbolic polyhedral $(S,V,d_f)$. We say that $(\mathbf{I},\mathbf{r})$ is an inversive distance circle packing of $(S,V,d_h)$ with respect to $\mathcal{T}$.

Note that $L_h$ is related to the triangulation, however, we omit the symbol $\mathcal{T}$ on the notation on the mapping, since it appears in the notation of the domain $\mathbb{R}_{>1}^{E(\mathcal{T})} \times \mathbb{R}_{>0}^V$.

At the end of this section, we define the \emph{lifting} of $L_h$. Denote that
\[
    Q_h(\mathcal{T}) \coloneqq L_h^{-1}(\mathbb{R}^{E(\mathcal{T})}_\Delta)
    \subset \mathbb{R}_{>1}^{E(\mathcal{T})} \times \mathbb{R}_{>0}^V.
\]

Define the lifting map
\[
    \tilde{L}_h \colon Q_h(\mathcal{T}) \to 
    \mathbb{R}^{E(\mathcal{T})}_\Delta \times \mathbb{R}_{>0}^V
    \quad (\mathbf{I},\mathbf{r}) \mapsto (L_h(\mathbf{I},\mathbf{r}),\mathbf{r})
\]
and the projection map
\[
    \pi \colon \mathbb{R}_{>0}^{E(\mathcal{T})} \times \mathbb{R}_{>0}^V
    \to \mathbb{R}_{>0}^{E(\mathcal{T})} \quad (\mathbf{l},\mathbf{r}) \mapsto \mathbf{l}\,,
\]
then we have
\[
    \pi \circ \tilde{L}_h = L_h.
\]

\begin{prop}\label{lift_inject}
    $\tilde{L}_h$ is an injective map, then a real analytic homeomorphism from its domain to its image.
\end{prop}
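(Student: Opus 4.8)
The plan is to exhibit an explicit real-analytic two-sided inverse, so that the proposition follows at once. First I would record that $\tilde{L}_h$ is real analytic on $Q_h(\mathcal{T})$. The second coordinate $\mathbf{r}\mapsto\mathbf{r}$ is trivially analytic, and for the first coordinate the only point that needs checking is that the argument of $\arcch$ stays inside its domain $(1,\infty)$: when $r_i,r_j>0$ and $I_{ij}>1$ one has
\[
    \cosh r_i\cosh r_j+I_{ij}\sinh r_i\sinh r_j\ \ge\ \cosh r_i\cosh r_j+\sinh r_i\sinh r_j\ =\ \cosh(r_i+r_j)\ >\ 1,
\]
so $\mathbf{l}(e_{ij})$ is a composition of real-analytic maps. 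I would also note that $Q_h(\mathcal{T})=L_h^{-1}(\mathbb{R}^{E(\mathcal{T})}_\Delta)$ is open in $\mathbb{R}_{>1}^{E(\mathcal{T})}\times\mathbb{R}_{>0}^V$, since $L_h$ is continuous and $\mathbb{R}^{E(\mathcal{T})}_\Delta$ is open, so the domain carries an evident real-analytic structure.

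Next I would invert $\tilde{L}_h$ using the hyperbolic inversive distance formula \eqref{invhyp}. Given $(\mathbf{l},\mathbf{r})$ in the image, the radii are read off directly from the second coordinate, and for each edge one sets
\[
    I_{ij}=\frac{\cosh \mathbf{l}(e_{ij})-\cosh r_i\cosh r_j}{\sinh r_i\sinh r_j}.
\]
This defines a map $G$ which is real analytic on all of $\mathbb{R}^{E(\mathcal{T})}_\Delta\times\mathbb{R}_{>0}^V$ (the denominators never vanish there). A direct substitution gives $G\circ\tilde{L}_h=\mathrm{id}$ on $Q_h(\mathcal{T})$ — which already proves $\tilde{L}_h$ is injective — and $\tilde{L}_h\circ G=\mathrm{id}$ on the set where $G$ takes values in $\mathbb{R}_{>1}^{E(\mathcal{T})}$, i.e. where $\cosh\mathbf{l}(e_{ij})>\cosh(r_i+r_j)$, equivalently $\mathbf{l}(e_{ij})>r_i+r_j$, for every edge. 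That set is open and, unwinding the definitions, is precisely $\tilde{L}_h(Q_h(\mathcal{T}))$; hence $G$ restricted to it is a genuine two-sided inverse.

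Combining these observations, $\tilde{L}_h$ is a real-analytic bijection from the open set $Q_h(\mathcal{T})$ onto the open set $\{(\mathbf{l},\mathbf{r})\in\mathbb{R}^{E(\mathcal{T})}_\Delta\times\mathbb{R}_{>0}^V:\mathbf{l}(e_{ij})>r_i+r_j\text{ for all }e_{ij}\in E(\mathcal{T})\}$, with real-analytic inverse $G$; in particular it is a real-analytic homeomorphism onto its image. I do not anticipate a genuine obstacle here: once one notices that the radius coordinate is carried along untouched, the content reduces to the fact that, with the radii fixed, the map $I_{ij}\mapsto\mathbf{l}(e_{ij})$ is a strictly increasing real-analytic diffeomorphism of $(1,\infty)$ onto an interval, and formula \eqref{invhyp} is exactly its inverse. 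The only subtlety worth flagging is the verification above that the $\arcch$ argument never degenerates to $1$, which is what keeps the statement at the level of real analyticity rather than mere continuity.
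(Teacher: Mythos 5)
Your proposal is correct and follows essentially the same route as the paper: the second coordinate pins down $\mathbf{r}$, and with the radii fixed the length formula is inverted via \eqref{invhyp} to recover $\mathbf{I}$, which is exactly how the paper deduces injectivity. You simply carry the argument slightly further by writing the inverse explicitly and identifying the image as the open set where $\mathbf{l}(e_{ij})>r_i+r_j$, which makes the real-analytic homeomorphism claim fully explicit rather than asserted.
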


\begin{proof}
    If $\tilde{L}_h(\mathbf{I},\mathbf{r})=\tilde{L}_h(\mathbf{I}',\mathbf{r}')$, we have $(L_h(\mathbf{I},\mathbf{r}),\mathbf{r})=(L_h(\mathbf{I}',\mathbf{r}'),\mathbf{r}')$, then $\mathbf{r}=\mathbf{r}'$. Thus, by
    \[
        \cosh r_i \cosh r_j + I_{ij} \sinh r_i \sinh r_j)=
        \cosh r'_i \cosh r'_j + I'_{ij} \sinh r'_i \sinh r'_j)
    \]
    we know that for any $e_{ij} \in E$ and $I_{ij}=I'_{ij}$, or $\mathbf{I}=\mathbf{I}'$.
\end{proof}

\subsection{Cell decomposition}
Given a marked surface $(S,V)$ and its related hyperbolic surface with geodesic boundaries $(\Sigma,d)$, with a weight function $\mathbf{r} \in \mathbb{R}^V_{>0}$ and a truncated triangulation $\mathcal{T}=(\Gamma,E,F)$. Denote that $\mathbf{l}=\Omega_\mathcal{T}^{-1}([d])$ and $l_{ij}=\mathbf{l}(e_{ij})$.

For a truncated hinge $\Diamond_{ij;kl}$, we denote that $p=r_k,q=r_i,r=r_l,s=r_j,a=\cosh l_{ki},b=\cosh l_{il},c=\cosh l_{lj},d=\cosh l_{jk},e=\cosh l_{ij}$ similarly as in Theorem \ref{teich home formula}.

\begin{defn}\label{dft}
    Let $f$ satisfy the formula \eqref{f}, the \emph{local weighted Delaunay inequality} for the edge $e_{ij}$ is defined (in Euclidean case) by
    \begin{equation}\label{ineq_f}
        \frac{\sqrt {\Delta _{bce}}}{p} + \frac{\sqrt {\Delta _{ade}}}{r}\le
        \frac{\sqrt {\Delta _{cdf}}}{q} + \frac{\sqrt {\Delta _{abf}}}{s}
    \end{equation}

    The set of $(\mathbf{I},\mathbf{r})$ satisfied the inequality for edge $e_{ij}$ of the triangulation $\mathcal{T}$, denote by $D_f(e_{ij};\mathcal{T})$, is a closed set in $\mathbb{R}_{>1}^{E(\mathcal{T})} \times \mathbb{R}_{>0}^V$. 
    
    The intersection of all these $\left|E\right|$ number of closed sets with respect to $\mathcal{T}$ is denoted by
        \begin{equation}\label{ineq_f2}
        D_f(\mathcal{T}) \coloneqq \bigcap_{e_{ij} \in E(\mathcal{T})}
        D_f(e_{ij};\mathcal{T}) \subset \mathbb{R}_{>1}^{E(\mathcal{T})}
        \times \mathbb{R}_{>0}^V.
    \end{equation}
\end{defn}

\begin{lem}\label{gbh_del}
    With the notations above, the edge $e_{ij}$ is local weighted Delaunay if and only if the inequality \eqref{ineq_f} holds, where $f$ satisfies the formula \eqref{f}.
\end{lem}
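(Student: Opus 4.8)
The plan is to localize everything to a single right-angled octagon and then identify the resulting geometric inequality with \eqref{ineq_f}. All the data in the statement lives on the hinge $\Diamond_{ij;kl}$, so I would first develop the two truncated triangles $f_{ijk}$ and $f_{ijl}$ isometrically into $\mathbb{H}^2$ and glue them along the common perpendicular segment $e_{ij}$ (working in the universal cover beforehand, in case the $\Delta$-complex glues the hinge to itself); this yields the right-angled octagon of Figure \ref{fig:4hyp}, whose interior geodesic arcs $e_{ki},e_{il},e_{lj},e_{jk},e_{ij}$ have prescribed lengths with cosines $a,b,c,d,e$, and whose boundary arcs lie on $\gamma_i,\gamma_j,\gamma_k,\gamma_l$.

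Second, the geometric reformulation. By the evident analogue of Definition \ref{loc_del} in the geodesic-boundary setting, ``$e_{ij}$ is local weighted Delaunay'' means $h_{ij,k}+h_{ij,l}\ge 0$, where $h_{ij,k}$ is the distance from the weighted Voronoi vertex $O_{ijk}$ to the line carrying $e_{ij}$, taken with the sign that is positive when $O_{ijk}$ lies on the $\gamma_k$ side. By Lemma \ref{gbh_sinh}, $O_{ijk}$ is the unique point with $r_i\sinh d(\cdot,\gamma_i)=r_j\sinh d(\cdot,\gamma_j)=r_k\sinh d(\cdot,\gamma_k)$ — the intersection of two of the three ``weighted bisector'' geodesics attached to the pairs $\{\gamma_i,\gamma_j\}$, $\{\gamma_j,\gamma_k\}$, $\{\gamma_k,\gamma_i\}$ — and the bisector of $\{\gamma_i,\gamma_j\}$ is orthogonal to the line of $e_{ij}$, so $h_{ij,k}$ is genuinely the signed length of one leg of the Voronoi edge dual to $e_{ij}$. (That this is the same condition as ``$e_{ij}$ survives in the weighted Delaunay triangulation'' is the geodesic-boundary transcription of the second bullet of Theorem \ref{local_whole_h} and of Theorems \ref{gbh_vor} and \ref{gbh_unique}, which I would record in passing.) One may equivalently phrase the condition as the non-negativity of an inversive distance between the orthogonal circle $\odot O_{ijk}$ of $f_{ijk}$ and the circle of radius $r_l$ about $v_l$; I would use whichever of the two is cleaner to compute with.

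Third, the computation. I would locate $O_{ijk}$ from those three equalities using the right-angled-hexagon trigonometry of $f_{ijk}$, drop the perpendicular to the line of $e_{ij}$, and obtain $\sinh h_{ij,k}$ as an explicit expression in $a,d,e,p,q,s$ carrying the factor $\sqrt{\Delta_{ade}}$ (the square root of the discriminant of inversive distance associated with the hexagon $f_{ijk}$), and symmetrically $\sinh h_{ij,l}$ carrying $\sqrt{\Delta_{bce}}$, over strictly positive denominators. Because $\sinh$ is increasing, $h_{ij,k}+h_{ij,l}\ge 0$ is equivalent, after clearing those denominators, to a polynomial inequality in $a,b,c,d,e,p,q,r,s$ and the two radicals $\sqrt{\Delta_{ade}},\sqrt{\Delta_{bce}}$. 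I would then bring in the flipped quantity $f$ of \eqref{f}: using the generalized Ptolemy relation \eqref{abcdef} (the quadratic that $f$ satisfies) together with the two radical identities \eqref{delta} for $\sqrt{\Delta_{abf}}$ and $\sqrt{\Delta_{cdf}}$, the right-hand side $\tfrac{\sqrt{\Delta_{cdf}}}{q}+\tfrac{\sqrt{\Delta_{abf}}}{s}$ of \eqref{ineq_f}, once $e^{2}-1$ is cleared, is exactly the polynomial produced above; conversely, substituting \eqref{delta} into \eqref{ineq_f} eliminates $f$ and returns the same inequality, which closes the equivalence. As a byproduct this exhibits the flip-symmetry — \eqref{ineq_f} for $e_{ij}$ in $\mathcal{T}$ is the reversal of the corresponding inequality for $e_{kl}$ in $\mathcal{T}'$, again via \eqref{delta} — which is what will let the closed regions $D_f(e_{ij};\mathcal{T})$ fit into a cell decomposition in Section \ref{section4}.

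The hard part is the third step: fixing the branch of each $\sqrt{\Delta_\bullet}$ consistently — in particular pinning down the sign of $h_{ij,k}$ when $O_{ijk}$ leaves the hexagon $f_{ijk}$ (the non-Delaunay regime), and checking that the $\gamma$'s involved are mutually disjoint so that Lemma \ref{gbh_sinh} applies — and then verifying what is, after the dust settles, a nontrivial polynomial identity tying the right-angled-hexagon trigonometry to \eqref{f}, \eqref{abcdef} and \eqref{delta}. The Euclidean counterpart of this identity in \cite{zhu2023existence} furnishes the template, but each ingredient must be recomputed with hyperbolic right-angled hexagons in place of Euclidean triangles; this is precisely the ``revised weighted Delaunay inequality of the hyperbolic case'' promised in the abstract.
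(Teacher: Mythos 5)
Your overall scheme is the right one, and it is the same architecture this paper uses for the hyperbolic analogue \eqref{ineq_h} in Section \ref{section3} (the unnamed lemma preceding Theorem \ref{whole_del_h}); the statement you are proving is itself imported from \cite{zhu2023existence} without proof here, but the template is identical: develop the hinge to the right-angled octagon, rewrite ``local weighted Delaunay'' as $h_{ij,k}+h_{ij,l}\ge 0$ for the signed distances of the dual vertices to $e_{ij}$, locate those dual vertices via the $\sinh$-bisectors of Lemma \ref{gbh_sinh}, and eliminate $f$ from \eqref{ineq_f} using \eqref{delta}.

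The gap is in your third step, at ``Because $\sinh$ is increasing, \dots after clearing those denominators, \dots is exactly the polynomial produced above.'' After substituting \eqref{delta}, inequality \eqref{ineq_f} does not reduce to $\sinh h_{ij,k}+\sinh h_{ij,l}\ge 0$; it reduces to a combination of the form $\frac{\sinh h_{ij,k}}{\rho_{ijk}}+\frac{\sinh h_{ij,l}}{\rho_{ijl}}\ge 0$, where $\rho_{ijk}=r_i\sinh d(O_{ijk},\gamma_i)$ is the common value defining the dual vertex of $f_{ijk}$ (this is where $\sqrt{\Delta_{ade}}$ and $\sqrt{\Delta_{bce}}$ recombine and cancel, exactly as in the displayed computation for \eqref{ineq_h}). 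Since $\rho_{ijk}\ne\rho_{ijl}$ in general, monotonicity of $\sinh$ alone does not convert this weighted sum into $h_{ij,k}+h_{ij,l}\ge 0$, and your claimed exact polynomial match fails by these face-dependent factors. The missing ingredient is a cross-face normalization: both $O_{ijk}$ and $O_{ijl}$ lie on the bisector of $\{\gamma_i,\gamma_j\}$, which meets $e_{ij}$ orthogonally at a point $M$, and the Lambert-quadrilateral identity \eqref{cosh_ratio} gives $\sinh d(O,\gamma_i)=\sinh d(M,\gamma_i)\cosh d(O,M)$ for any $O$ on that bisector; hence $\rho_{ijk}=C_{ij}\cosh h_{ij,k}$ and $\rho_{ijl}=C_{ij}\cosh h_{ij,l}$ with the single constant $C_{ij}=r_i\sinh d(M,\gamma_i)=r_j\sinh d(M,\gamma_j)$. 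The weighted sum then equals $C_{ij}^{-1}\bigl(\tanh h_{ij,k}+\tanh h_{ij,l}\bigr)$, and since $\tanh$ is odd and increasing this is nonnegative exactly when $h_{ij,k}+h_{ij,l}\ge 0$. This is the precise analogue of the step $\frac{\cosh h_{ij,k}}{\cosh\rho_{ijk}}=\frac{\cosh h_{ij,l}}{\cosh\rho_{ijl}}$ in the paper's proof of the hyperbolic version; once you insert it, the rest of your outline (sign conventions, branches of the radicals, the flip symmetry via \eqref{delta}) goes through.
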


\begin{thm}\label{gbh_whole_del}
    Given a hyperbolic surface with geodesic boundaries $(\Sigma,\Gamma,d)$ and a weight function $\mathbf{r} \in \mathbb{R}^\Gamma_{>0}$, the truncated triangulation $\mathcal{T}$ is weighted Delaunay if and only if there exist $(\mathbf{I},\mathbf{r}) \in D_f(\mathcal{T})$ such that $\Omega_\mathcal{T} \circ \arcch(\mathbf{I})=[d]$, where
    \[
        \arcch \colon \mathbb{R}^k_{\ge 1} \to \mathbb{R}^k_{\ge 0}\quad
        (I_1,\dots,I_k)\mapsto(\arcch I_1,\dots,\arcch I_k).
    \]
\end{thm}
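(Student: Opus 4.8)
The plan is to reduce the global "weighted Delaunay" condition on $\mathcal{T}$ to a per-edge condition and then match that per-edge condition with membership in $D_f(\mathcal{T})$. First I would invoke Theorem \ref{gbh_unique} together with the local-to-global characterization (the analogue of the second bullet of Theorem \ref{local_whole_h}, now in the geodesic-boundary setting): the truncated triangulation $\mathcal{T}$ of $(\Sigma,\Gamma,d)$ is weighted Delaunay with respect to $\mathbf{r}$ if and only if every edge $e_{ij}\in E(\mathcal{T})$ is local weighted Delaunay, i.e. $h_{ij,k}+h_{ij,l}\ge 0$ for the two faces $f_{ijk},f_{ijl}$ adjacent to $e_{ij}$. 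This is exactly the dual formulation of the weighted Voronoi decomposition of Theorem \ref{gbh_vor}, using Lemma \ref{gbh_sinh} to identify the Voronoi edges as the geodesics $r_i\sinh d(q,\gamma_i)=r_j\sinh d(q,\gamma_j)$ orthogonal to the segments between boundary components.

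Next I would translate the geometric inequality $h_{ij,k}+h_{ij,l}\ge 0$ into the algebraic inequality \eqref{ineq_f}. This is precisely the content of Lemma \ref{gbh_del}, so here I only need to set up the dictionary: given $[d]\in Teich(\Sigma)$, by Theorem \ref{teich home} the length coordinate $\mathbf{l}=\Omega_\mathcal{T}^{-1}([d])$ is well-defined, and with the substitution $a=\cosh l_{ki}$, $b=\cosh l_{il}$, $c=\cosh l_{lj}$, $d=\cosh l_{jk}$, $e=\cosh l_{ij}$, and $p=r_k$, $q=r_i$, $r=r_l$, $s=r_j$, the quantity $f$ of \eqref{f} is the cosine of the length of the flipped edge $e_{kl}$ (Theorem \ref{teich home formula}), and the signed distances $h_{ij,k},h_{ij,l},h_{ki,j}=h_{ki,l}$ etc. are expressed through the $\sqrt{\Delta}$ terms divided by the corresponding radius; Lemma \ref{gbh_del} then gives $h_{ij,k}+h_{ij,l}\ge 0 \iff \eqref{ineq_f}$. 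Running this equivalence over all edges of $\mathcal{T}$ and using the definition \eqref{ineq_f2} of $D_f(\mathcal{T})$, I get: $\mathcal{T}$ is weighted Delaunay $\iff$ for all $e_{ij}$, \eqref{ineq_f} holds $\iff$ $(\mathbf{I},\mathbf{r})\in D_f(\mathcal{T})$, where $\mathbf{I}$ is determined by $I_{ij}=\cosh l_{ij}$, i.e. $\arcch(\mathbf{I})=\mathbf{l}=\Omega_\mathcal{T}^{-1}([d])$, equivalently $\Omega_\mathcal{T}\circ\arcch(\mathbf{I})=[d]$.

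For the converse direction I would argue as follows: if such an $(\mathbf{I},\mathbf{r})\in D_f(\mathcal{T})$ exists with $\Omega_\mathcal{T}\circ\arcch(\mathbf{I})=[d]$, then $\arcch(\mathbf{I})$ is forced to equal $\Omega_\mathcal{T}^{-1}([d])$ because $\Omega_\mathcal{T}$ is a homeomorphism (Theorem \ref{teich home}), hence a bijection; so the length coordinates $\mathbf{l}$ are pinned down, every edge inequality \eqref{ineq_f} holds by hypothesis, Lemma \ref{gbh_del} makes every edge local weighted Delaunay, and the local-to-global statement upgrades this to $\mathcal{T}$ being weighted Delaunay. I should also remark that membership in $D_f(\mathcal{T})$ already implies $(\mathbf{I},\mathbf{r})\in\mathbb{R}_{>1}^{E(\mathcal{T})}\times\mathbb{R}_{>0}^V$ and that $\arcch(\mathbf{I})$ lands in $\mathbb{R}_{>0}^{E(\mathcal{T})}$, so the composition $\Omega_\mathcal{T}\circ\arcch(\mathbf{I})$ makes sense. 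I expect the main obstacle to be the careful justification of the local-to-global equivalence in the truncated-triangulation setting — verifying that the argument of \cite{bobenko2007discrete}\cite{gorlina2011weighted} transported to hyperbolic surfaces with geodesic boundary (as asserted after Theorem \ref{local_whole_h}) genuinely applies here, in particular that the empty-orthogonal-circle/weighted-Voronoi duality behaves correctly near the boundary curves $\gamma_i$ — together with bookkeeping the sign conventions for $h_{ij,k}$ so that the chain of equivalences through Lemma \ref{gbh_del} is consistent across all edges simultaneously; the Teichmüller-space part is then a soft consequence of the homeomorphism $\Omega_\mathcal{T}$.
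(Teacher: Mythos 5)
Your plan is correct and is exactly the intended argument: reduce the global weighted Delaunay condition to the per-edge condition via the local-to-global principle (the truncated analogue of the second bullet of Theorem \ref{local_whole_h}, together with Theorems \ref{gbh_vor} and \ref{gbh_unique}), then apply Lemma \ref{gbh_del} edge by edge and identify $\mathbf{I}=\cosh\bigl(\Omega_\mathcal{T}^{-1}([d])\bigr)$ so that membership in $D_f(\mathcal{T})$ is precisely the conjunction of the inequalities \eqref{ineq_f}. Note that the paper itself states this theorem without proof, recalling it from \cite{zhu2023existence}, and the one step you rightly flag as the real work --- justifying the local-to-global equivalence near the boundary curves $\gamma_i$ --- is likewise only asserted by analogy in the present text.
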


In paper \cite{zhu2023existence}, Zhu claimed that if we denote the map as
\begin{equation}\label{omega_f}
    \tilde{\Omega}_\mathcal{T} \colon \mathbb{R}_{>1}^{E(\mathcal{T})}
    \times \mathbb{R}_{>0}^V \to Teich(\Sigma) \times 
    \mathbb{R}_{>0}^V \quad (\mathbf{I},\mathbf{r}) \mapsto
    (\Omega_\mathcal{T}\circ \arcch(\mathbf{I}),\mathbf{r}),
\end{equation}
then 
\begin{equation}\label{union_teich_gb}
    Teich(\Sigma) \times \mathbb{R}_{>0}^V=\bigcup_\mathcal{T}
    \tilde\Omega_\mathcal{T}(D_f(\mathcal{T}))
\end{equation}
forms an infinite but local finite cell decomposition, where the union is over all truncated triangulations of $\Sigma$.

\section{Compact orthogonal circle}\label{section3}

\subsection{Orthogonal circle formula}

Given an inversive distance function on edges $\mathbf{I} \in \mathbb{R}_{>1}^{E(\mathcal{T})}$ with respect to the triangulation $\mathcal{T}$ and a weight function $\mathbf{r} \in \mathbb{R}_{>0}^V$ as well on a marked surface $(S,V)$, we can construct a hyperbolic polyhedral metric $d_h \in \Psi_\mathcal{T} \circ L_h(\mathbf{I},\mathbf{r})$. 

Firstly, we consider the inversive distance on one single triangle face $f_{ijk}$ embedded in Poincar\'e disk $\mathbb{D} \cong \mathbb{H}^2$. See Figure \ref{fig:orth3h}. We denote that $P=v_i,Q=v_j,R=v_k$, and 
\[
    \begin{aligned}
        a=I_{jk},b=I_{ki},c=I_{ij} &\quad x=\cosh QR,y=\cosh RP,z=\cosh PQ\\
        p=\cosh r_i,q=\cosh r_j,r=\cosh r_k &\quad 
        \hat{p}=\tanh r_i,\hat{q}=\tanh r_j,\hat{r}=\tanh r_k
    \end{aligned}
\]
where $PQ$, $QR$ and $RP$ are the hyperbolic lengths of edge $e_{ij}$, $e_{jk}$ and $e_{ki}$.

\begin{figure}[ht]
    \centering \includegraphics{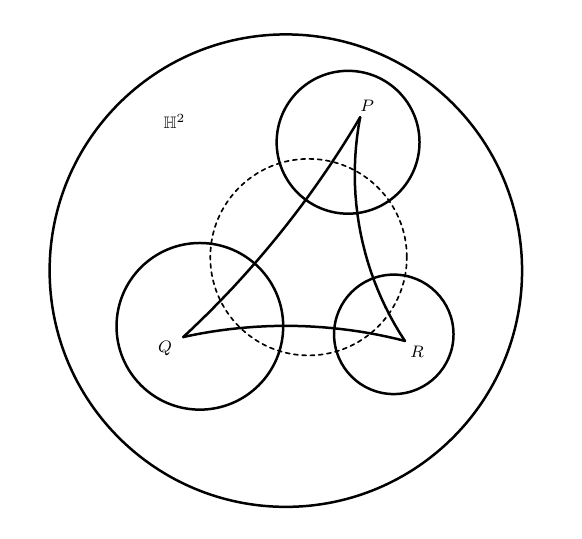}
    \caption{The dashed orthogonal circle is compact.} \label{fig:orth3h}
\end{figure}

If we observe them as the conformally embedding $\mathbb{D} \hookrightarrow \mathbb{E}^2$, then a hyperbolic circle maps to a Euclidean circle. Thus, the image of $\odot P$,$\odot Q$ and $\odot R$ from circle packing are three disjoint circles in $\mathbb{E}^2$. 
They have a unique orthogonal circle, denoted by $\odot O_{ijk}$. If $\odot O_{ijk} \subset \mathbb{D}$ then the orthogonal circle is \emph{compact}. Otherwise, it is \emph{non-compact}.

We define the \emph{discriminant of compactness} 
\begin{equation}\label{xi_xyz}
    \Xi_{ijk} \coloneqq p^2(1-x^2)+q^2(1-y^2)+r^2(1-z^2)
    +2pq(xy-z)+2pr(xz-y)+2qr(yz-x).
\end{equation}

Since $\cosh(\arcth t)=(1-t^2)^{-\frac12}$, substitute
\[
    \begin{aligned}
        x=qr+a\sqrt{1-q^2}\sqrt{1-r^2} &\quad p=(1-\hat{p}^2)^{-\frac12}\\
        y=rp+b\sqrt{1-r^2}\sqrt{1-p^2} &\quad q=(1-\hat{q}^2)^{-\frac12}\\
        z=pq+c\sqrt{1-p^2}\sqrt{1-q^2} &\quad r=(1-\hat{r}^2)^{-\frac12}
    \end{aligned}
\]
into \eqref{xi_xyz} and simplify, we have
\begin{equation}\label{xi_abc}
    \begin{aligned}
        (1-\hat{p}^2)(1-\hat{q}^2)(1-\hat{r}^2)\Xi_{ijk}
        =&(1-c^2)\hat{p}^2\hat{q}^2+(1-b^2)\hat{p}^2\hat{r}^2
        +(1-a^2)\hat{q}^2\hat{r}^2 \\ +&2((a+bc)\hat{p}+(b+ac)\hat{q}
        +(c+ab)\hat{r})\hat{p}\hat{q}\hat{r}.
    \end{aligned}
\end{equation}

Denote that 
\begin{equation}%
    \widehat{PQ}=\sqrt{\hat{p}^2+\hat{q}^2+2c\hat{p}\hat{q}}\quad
    \widehat{PR}=\sqrt{\hat{p}^2+\hat{r}^2+2b\hat{p}\hat{r}}\quad
    \widehat{QR}=\sqrt{\hat{q}^2+\hat{r}^2+2a\hat{q}\hat{r}},
\end{equation}%
then by simply computing,
\begin{equation}\label{xi_trig}
    \Xi_{ijk} = \frac{(\widehat{PQ}+\widehat{PR}+\widehat{QR})
    (\widehat{PQ}+\widehat{PR}-\widehat{QR})
    (\widehat{PQ}+\widehat{QR}-\widehat{PR})
    (\widehat{PR}+\widehat{QR}-\widehat{PQ})}
    {(1-\hat{p}^2)(1-\hat{q}^2)(1-\hat{r}^2)}.
\end{equation}

\begin{thm}\label{xi_thm}
    Given an inversive distance circle packing $L_h(\mathbf{I},\mathbf{r})$ on a hyperbolic polyhedral surface. With the notations above, if the lengths of face $f_{ijk}$ satisfy the triangle inequalities, and there exist a compact orthogonal for it, of which the radius is denoted by $\rho_{ijk} \in \mathbb{R}_{>0}$. Then we have $\Xi_{ijk}>0$ and
    \begin{equation}\label{radius_tanh}
        \sinh r_i \sinh r_j \sinh r_k \sqrt{\Delta_{abc}}
        =\tanh\rho_{ijk}\sqrt{1+2xyz-x^2-y^2-z^2}
        =\sinh\rho_{ijk}\sqrt{\Xi_{ijk}}.
    \end{equation}
\end{thm}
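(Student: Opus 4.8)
The plan is to establish the three-way equality in \eqref{radius_tanh} by computing the radius of the orthogonal circle directly in the Poincar\'e disk model and then translating the expressions into the inversive-distance/radius data. First I would recall the standard fact that if three mutually disjoint hyperbolic circles $\odot P$, $\odot Q$, $\odot R$ admit a common orthogonal circle, then that orthogonal circle, viewed as a Euclidean circle under $\mathbb{D}\hookrightarrow\mathbb{E}^2$, is the radical circle of the three, and its center $O_{ijk}$ is the radical center. Equivalently, using the power-of-a-point relation for orthogonality, $\cosh\rho_{ijk}=\cosh d_h(O_{ijk},v_i)/\cosh r_i$ for each $i,j,k$ as already stated in the text. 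So the main task is to solve for $\cosh d_h(O_{ijk},v_i)$ from the three equal-power conditions and plug back in.

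The cleanest route is to place the triangle $PQR$ in $\mathbb{H}^2$ with hyperbolic side lengths $PQ,QR,RP$ (legitimate since the triangle inequalities hold) and to use the hyperbolic law of cosines to get the angles, then locate $O_{ijk}$ by the signed distances $h_{ij,k}$ to the edges. Actually I expect it to be more efficient to work purely algebraically: set $X_i \coloneqq \cosh d_h(O_{ijk},v_i)$, so $X_i = \cosh\rho_{ijk}\cosh r_i$; the orthogonality of $\odot O_{ijk}$ with each $\odot P$ is exactly this relation, and the fact that $O_{ijk}$ is a single point of $\mathbb{H}^2$ at prescribed distances from the three vertices $v_i,v_j,v_k$ imposes one constraint — the vanishing of the $4\times 4$ Cayley--Menger-type (Gram) determinant of the four points $O_{ijk},v_i,v_j,v_k$ in the hyperboloid model. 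Expanding that determinant with $\cosh$ of the pairwise distances $x,y,z$ among the vertices and $X_i=\cosh\rho_{ijk}\,p$, $X_j=\cosh\rho_{ijk}\,q$, $X_k=\cosh\rho_{ijk}\,r$ yields a linear equation for $\cosh^2\rho_{ijk}$ (equivalently $\sinh^2\rho_{ijk}$) whose solution I would recognize, after factoring, as $\sinh^2\rho_{ijk}=(\sinh r_i\sinh r_j\sinh r_k)^2\Delta_{abc}/\Xi_{ijk}$, using \eqref{xi_xyz} and the substitution $x=qr+a\sqrt{1-q^2}\sqrt{1-r^2}$ etc. to identify the numerator with $(\sinh r_i\sinh r_j\sinh r_k)^2\Delta_{abc}$ and the denominator coefficient of $\sinh^2\rho_{ijk}$ with $\Xi_{ijk}$ (this is precisely the "simplify" step producing \eqref{xi_abc}). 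Positivity $\Xi_{ijk}>0$ then follows because $\sinh^2\rho_{ijk}>0$ and $\Delta_{abc}>0$ (the latter being the condition $I_{ij},I_{jk},I_{ki}>1$ guaranteeing the three circles are genuinely disjoint); alternatively $\Xi_{ijk}>0$ is immediate from the Heron-type factorization \eqref{xi_trig} once one checks the three "generalized triangle inequalities" among $\widehat{PQ},\widehat{PR},\widehat{QR}$ hold, which is equivalent to the radical center lying in the configuration forced by compactness.

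The middle expression $\tanh\rho_{ijk}\sqrt{1+2xyz-x^2-y^2-z^2}$ I would reconcile with the other two as follows: $1+2xyz-x^2-y^2-z^2$ is (up to sign) the Gram determinant of the three vertices $v_i,v_j,v_k$ alone, which equals $-4\,(\text{something})$ but more usefully $\sqrt{1+2xyz-x^2-y^2-z^2}=2\sinh(\tfrac{PQ}{2})\cdots$-type area quantity; concretely it is $\sinh PQ\sinh PR\sin\angle P$ by the law of cosines, i.e. twice the "Lorentzian area" of the triangle. Then the identity $\cosh\rho_{ijk}\cdot(\text{vertex Gram det})^{1/2}$ versus $\sinh r_i\sinh r_j\sinh r_k\sqrt{\Delta_{abc}}$ should drop out of the same determinant expansion, just grouped differently — one keeps $\cosh\rho$, the other substitutes $\cosh^2\rho-1=\sinh^2\rho$. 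I would present the computation as: (i) write the $4\times4$ hyperboloid Gram determinant $=0$; (ii) expand along the $O_{ijk}$ row to get $A\cosh^2\rho_{ijk}+B=0$ with $A,B$ polynomials in $p,q,r,x,y,z$; (iii) identify $A = 1+2xyz-x^2-y^2-z^2$ and $A-(-B)\cdot(\text{stuff})$... and read off the three forms; (iv) apply the substitution to pass from $(x,y,z)$ to $(a,b,c,\hat p,\hat q,\hat r)$, invoking \eqref{xi_abc} and \eqref{xi_trig}; (v) conclude $\Xi_{ijk}>0$.

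The main obstacle I anticipate is step (iii)–(iv): the algebraic identification that the two superficially different polynomial expressions — the one coming out of the Gram determinant in the $\cosh$-of-lengths variables, and the target $\sinh r_i\sinh r_j\sinh r_k\sqrt{\Delta_{abc}}$ in the radius/inversive-distance variables — are literally equal after the trigonometric substitution. This is a genuine (if mechanical) computation, essentially the same "simplify" the author already performed to pass from \eqref{xi_xyz} to \eqref{xi_abc} and \eqref{xi_trig}, and it is exactly the kind of step the introduction warns is complicated enough to warrant attached code. I would either do it by hand with careful bookkeeping of the cross terms, or verify it symbolically; the conceptual content (Gram determinant $=0$ for four cocircular-in-the-orthogonal-sense points, plus orthogonality $=$ power relation) is the easy part, and everything else is algebra plus the sign/positivity discussion that ties $\Xi_{ijk}>0$ to the existence of the compact orthogonal circle.
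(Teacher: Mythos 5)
Your proposal is correct and follows essentially the same route as the paper: both arguments hinge on the formula $\cosh^2\rho_{ijk}=(1+2xyz-x^2-y^2-z^2)/\Xi_{ijk}$ of \eqref{radius_cosh} (which the paper simply cites from \cite{zhang2014unified}, while you re-derive it from the vanishing $4\times 4$ Gram determinant of $O_{ijk},v_i,v_j,v_k$ in the hyperboloid model), followed by the algebraic identity $1+2xyz-x^2-y^2-z^2-\Xi_{ijk}=(p^2-1)(q^2-1)(r^2-1)\Delta_{abc}>0$ and the observation that $\sinh^2\rho_{ijk}>0$ then forces $\Xi_{ijk}>0$. The three-way equality in \eqref{radius_tanh} drops out by dividing and taking square roots exactly as you describe, so the only substantive difference is that your version is self-contained where the paper imports the radius formula as a black box.
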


\begin{proof}
    Since the orthogonal circle is compact, it is contained inside $\mathbb{D}$, more over, $\cosh d(O_{ijk},v_\alpha)=\cosh r_\alpha \cosh \rho_{ijk}$ where $\alpha=i,j,k$. In paper \cite{zhang2014unified}, it is known that
    \begin{equation}\label{radius_cosh}
        \cosh^2\rho_{ijk}=\frac{1+2xyz-x^2-y^2-z^2}{\Xi_{ijk}}.
    \end{equation}
    Keep calculating, we have
    \begin{equation}\label{radius_sinh}
        0<\sinh^2\rho_{ijk}=\frac{1+2xyz-x^2-y^2-z^2-\Xi_{ijk}}{\Xi_{ijk}}.
    \end{equation}

    Substituting the inversive distances $a,b,c>1$ and the hyperbolic cosine of radii, compute the numerator, we have
    \begin{equation}\label{xi_delta}		
        1+2xyz-x^2-y^2-z^2-\Xi_{ijk}=(p^2-1)(q^2-1)(r^2-1)\,\Delta_{abc}>0.
    \end{equation}
    Hence, the denominator $\Xi_{ijk}>0$. Note that $\sinh^2(\arcch t)=t^2-1$, then,
    \begin{equation}\label{radius_sinh2}
        \sinh^2\rho_{ijk}=\frac{\sinh^2r_i\sinh^2r_j\sinh^2r_k\,\Delta_{abc}}{\Xi_{ijk}}>0.
    \end{equation}
    Since $r_i,r_j,r_k,\rho_{ijk}>0$, take square root of \eqref{radius_sinh2} on both sides,
    \begin{equation}\label{radius_sinh3}
        \sinh\rho_{ijk}\sqrt{\Xi_{ijk}}
        =\sinh r_i \sinh r_j \sinh r_k \sqrt{\Delta_{abc}}.
    \end{equation}

    To divide \ref{radius_sinh2} by \ref{radius_cosh}, take the square root of both sides, and substitute in \ref{radius_sinh3}, then
    \begin{equation}%
        \tanh\rho_{ijk}\sqrt{1+2xyz-x^2-y^2-z^2}
        =\sinh\rho_{ijk}\sqrt{\Xi_{ijk}}.
    \end{equation}%
\end{proof}

The set of $(\mathbf{I},\mathbf{r})$ satisfying $\Xi_{ijk}>0$ forms an open set in $\mathbb{R}_{>1}^{E(\mathcal{T})} \times \mathbb{R}_{>0}^V$, denoted by $\Xi(f_{ijk};\mathcal{T})$. Define that
\begin{equation}%
    \Xi(\mathcal{T}) \coloneqq \bigcap_{f_{ijk} \in F(\mathcal{T})}
    \Xi(f_{ijk};\mathcal{T}) \subset \ 
    \mathbb{R}_{>1}^{E(\mathcal{T})} \times \mathbb{R}_{>0}^V.
\end{equation}%

\begin{thm}\label{Xi_trig}
    If $\Xi_{ijk}>0$, then $f_{ijk}$ satisfies triangle inequalities, namely, $\Xi(\mathcal{T}) \subset Q_h$. Moreover, the orthogonal circle is compact.
\end{thm}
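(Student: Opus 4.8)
The plan is to read the whole statement off the single algebraic identity \eqref{xi_delta} together with the formula \eqref{radius_cosh}, both of which were obtained in the proof of Theorem \ref{xi_thm} by direct substitution and are therefore polynomial identities in $(a,b,c,r_i,r_j,r_k)$, holding with no geometric side condition. Abbreviate $T_{ijk}\coloneqq 1+2xyz-x^2-y^2-z^2$, so that \eqref{xi_delta} reads
\[
    T_{ijk}-\Xi_{ijk}=(p^2-1)(q^2-1)(r^2-1)\,\Delta_{abc}.
\]
Under the standing hypotheses $\mathbf{I}\in\mathbb{R}_{>1}^{E(\mathcal{T})}$ and $\mathbf{r}\in\mathbb{R}_{>0}^V$ one has $p=\cosh r_i>1$, $q=\cosh r_j>1$, $r=\cosh r_k>1$, and $\Delta_{abc}=a^2+b^2+c^2+2abc-1>4>0$ because $a,b,c>1$; hence the right-hand side is strictly positive, and combining with the hypothesis $\Xi_{ijk}>0$ gives $T_{ijk}>\Xi_{ijk}>0$.

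The next step turns $T_{ijk}>0$ into the triangle inequalities for $f_{ijk}$. First, $x=\cosh l_{jk}>1$, since $l_{jk}=\arcch(\cosh r_j\cosh r_k+I_{jk}\sinh r_j\sinh r_k)>\arcch\cosh(r_j+r_k)=r_j+r_k>0$, and similarly $y,z>1$. Viewing $T_{ijk}$ as a quadratic in $x$, it is a downward parabola with discriminant $4(y^2-1)(z^2-1)$ and roots $yz\pm\sqrt{(y^2-1)(z^2-1)}=\cosh l_{ki}\cosh l_{ij}\pm\sinh l_{ki}\sinh l_{ij}=\cosh(l_{ki}\pm l_{ij})$. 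Hence $T_{ijk}>0$ is equivalent to $\cosh|l_{ki}-l_{ij}|<\cosh l_{jk}<\cosh(l_{ki}+l_{ij})$, i.e. to $|l_{ki}-l_{ij}|<l_{jk}<l_{ki}+l_{ij}$, which are exactly the three triangle inequalities of $f_{ijk}$. (Equivalently, $T_{ijk}=-\det G$ for the Minkowski Gram matrix $G$ of the three prospective vertices, positive precisely when $G$ has Lorentzian signature, i.e. when those three points can be realized in $\mathbb{H}^2$.) Therefore, for every $(\mathbf{I},\mathbf{r})\in\Xi(\mathcal{T})$ all faces satisfy their triangle inequalities, so $L_h(\mathbf{I},\mathbf{r})\in\mathbb{R}^{E(\mathcal{T})}_\Delta$, i.e. $(\mathbf{I},\mathbf{r})\in Q_h(\mathcal{T})$; thus $\Xi(\mathcal{T})\subset Q_h(\mathcal{T})$.

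For the last assertion, the triangle $f_{ijk}$ is now nondegenerate, so its isometric model embeds in $\mathbb{D}$ and the orthogonal circle $\odot O_{ijk}$ is defined. By \eqref{radius_cosh} and the inequality $T_{ijk}>\Xi_{ijk}>0$ from the first step, $\cosh^2\rho_{ijk}=T_{ijk}/\Xi_{ijk}>1$. Hence the Euclidean orthogonal circle, re-read in $\mathbb{H}^2$, is a genuine hyperbolic circle of positive radius $\rho_{ijk}$ centered at an honest point $O_{ijk}\in\mathbb{H}^2$ (the unique solution of $\cosh d(O_{ijk},v_\alpha)=\cosh\rho_{ijk}\cosh r_\alpha$ for $\alpha\in\{i,j,k\}$ with real radius); in particular $\odot O_{ijk}$ is a bounded subset of $\mathbb{D}$, i.e. compact. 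Conversely $\Xi_{ijk}\le 0$ would force $\cosh^2\rho_{ijk}\le 0<1$, or the horocyclic limit at $\Xi_{ijk}=0$, so $\odot O_{ijk}$ would degenerate to a hypercycle or horocycle and be non-compact, which explains the name \emph{discriminant of compactness}.

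I expect no deep obstacle, but two points deserve care. First, one must be certain that \eqref{xi_delta} is a genuine polynomial identity with no hidden assumption; this is precisely the direct substitution already carried out in the proof of Theorem \ref{xi_thm}, and may be cited verbatim. Second, one should pin down the exact dictionary ``$\cosh^2\rho_{ijk}>1\iff\odot O_{ijk}$ is a compact hyperbolic circle'', distinguishing it from a horocycle ($\cosh^2\rho_{ijk}=1$, attained only in the limit $\Xi_{ijk}=0$) or a hypercycle ($\cosh^2\rho_{ijk}<1$); for this, \cite{zhang2014unified} or a one-line sign analysis of $\cosh^2\rho_{ijk}-1$ suffices. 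The equivalence $T_{ijk}>0\iff$ triangle inequalities is the standard parabola (or Gram-determinant) computation above, and is not really an obstacle.
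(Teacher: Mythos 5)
Your derivation of the triangle inequalities is correct and is essentially the paper's own argument: both start from the identity \eqref{xi_delta}, deduce $1+2xyz-x^2-y^2-z^2>\Xi_{ijk}>0$, and convert this into $|l_{ki}-l_{ij}|<l_{jk}<l_{ki}+l_{ij}$ (the paper by rearranging to $(x-yz)^2<(y^2-1)(z^2-1)$, you by locating $x$ between the roots of the quadratic). That half needs no changes.

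The compactness argument, however, contains a genuine circularity. Formula \eqref{radius_cosh} is established (in Theorem \ref{xi_thm}, following \cite{zhang2014unified}) \emph{under the hypothesis} that the orthogonal circle is already a compact hyperbolic circle, i.e.\ that there exist $O_{ijk}\in\mathbb{H}^2$ and $\rho_{ijk}>0$ with $\cosh d(O_{ijk},v_\alpha)=\cosh\rho_{ijk}\cosh r_\alpha$. You then use the value $T_{ijk}/\Xi_{ijk}>1$ of that formula (with your $T_{ijk}=1+2xyz-x^2-y^2-z^2$) to conclude that such a pair $(O_{ijk},\rho_{ijk})$ exists. But when the Euclidean orthogonal circle is a horocycle or a hypercycle, no such pair exists, and the algebraic ratio $T_{ijk}/\Xi_{ijk}$ has a different geometric meaning: in the hypercycle case the paper shows the relevant quantity is $\sinh^2\rho'=(x^2+y^2+z^2-2xyz-1)/\Xi_{ijk}$, i.e.\ $-T_{ijk}/\Xi_{ijk}$, not the $\cosh^2$ of anything. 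So ``the formula evaluates to a number greater than $1$'' does not by itself exclude the non-compact configurations; your own caveat about ``pinning down the dictionary'' is precisely where the content lies, and it is more than a one-line sign check. The paper closes this gap by contradiction: (i) if the orthogonal circle were a horocycle, the Euclidean centers would be collinear after normalizing, forcing one of the degenerate equalities among $\widehat{PQ},\widehat{PR},\widehat{QR}$ and hence $\Xi_{ijk}=0$ by \eqref{xi_trig}; (ii) if it were a hypercycle, the feet $O_i,O_j,O_k$ on its axis would satisfy $2x'y'z'+1-x'^2-y'^2-z'^2=0$, which via \eqref{hypercycle} yields $\sinh^2\rho'=-T_{ijk}/\Xi_{ijk}>0$, contradicting \eqref{xi_delta}. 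To repair your proof, either reproduce this case analysis, or rederive \eqref{radius_cosh} from scratch in the hyperboloid model and verify that the candidate center is a timelike vector (an actual point of $\mathbb{H}^2$) exactly when $\Xi_{ijk}>0$ and $T_{ijk}>0$; as written, the final step is not justified.
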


\begin{proof}
    From Equation \eqref{xi_delta}, we have $2xyz+1-x^2-y^2-z^2>\Xi_{ijk}>0$. Therefore, we can conclude that $(x-yz)^2<(y^2-1)(z^2-1)$. By the definition of $L_h$, we know that $y,z>1$. Hence, $x-yz<\sqrt{y^2-1}\sqrt{z^2-1}$. In other words, $\cosh QR < \sinh PQ \sinh PR + \cosh PQ \cosh PR = \cosh(PQ+PR)$. Since the edge lengths are positive real numbers, and the hyperbolic cosine function is monotonically increasing in $\mathbb{R}_{>0}$, we can conclude that $QR<PQ+PR$. The same reasoning can be applied to prove the other triangle inequalities.

    Since face $f_{ijk}$ satisfies triangle inequalities, by isometric embedded in $\mathbb{D}$ and conformally embedded in $\mathbb{E}^2$, we know that there exist the orthogonal circle in $\mathbb{E}^2$. To prove this theorem, we will use proof by contradiction and consider two cases.

    Suppose that the orthogonal circle is a non-compact horocycle, or is tangent to the unit circle. We map $\mathbb{D}$ to the upper half space $\mathbb{H}^2$ by an isometry, such that the horocycle maps to the line $\Im z=1$. The image of $\odot P,\odot Q,\odot R$ are three circles orthogonal to $\Im z=1$, then their centers are collinear. Now it is worth noting that $\widehat{PQ}$, $\widehat{PR}$ and $\widehat{PR}$ are the Euclidean distance of these centers of circle. That is because inversive distance is invariant by inversion, and the hyperbolic distance between $\sqrt{-1}$ and $\tanh t+\frac{\sqrt{-1}}{\cosh t}$ on $\mathbb{H}^2$ is $t$. So there must be one formula among $\widehat{PQ}+\widehat{PR}=\widehat{QR}$, $\widehat{PQ}+\widehat{QR}=\widehat{PR}$ and $\widehat{PR}+\widehat{QR}=\widehat{PQ}$ that is true. By Formula \eqref{xi_trig}, we know $\Xi_{ijk}=0$, which is contradicted to $\Xi_{ijk}>0$.

    Suppose that the orthogonal circle is a non-compact hypercycle, or intersect with the unit circle at two points. On the geodesic connecting these two (infinite) points, there exist three points, denoted by $O_i$, $O_j$ and $O_k$, such that the geodesic and the orthogonal circle are all perpendicular to the geodesic connecting $O_\alpha$ and $v_\alpha$, where $\alpha=i,j,k$. The distances between the geodesic and the intersection points of the orthogonal circle and the vertex circles are equal, denoted by $\rho'>0$. Then by Formula \ref{cosh_ratio} we have 
    \[
        \sinh d(O_i,v_i)=p \sinh \rho' \quad \sinh d(O_j,v_j)=q \sinh \rho'
        \quad \sinh d(O_k,v_k)=r \sinh \rho'.
    \]    
    Let $x'=\cosh d(O_j,O_k)$, $y'=\cosh d(O_i,O_k)$ and $z'=\cosh d(O_i,O_j)$, by Formula \eqref{cosh_4} we have
    \begin{equation}\label{hypercycle}
    \begin{aligned}
        x'&=\frac{x+qr\sinh^2\rho'}{\sqrt{q^2\sinh^2\rho'+1}\sqrt{r^2\sinh^2\rho'+1}}\\
        y'&=\frac{y+pr\sinh^2\rho'}{\sqrt{p^2\sinh^2\rho'+1}\sqrt{r^2\sinh^2\rho'+1}}\\
        z'&=\frac{z+pq\sinh^2\rho'}{\sqrt{p^2\sinh^2\rho'+1}\sqrt{q^2\sinh^2\rho'+1}}.
    \end{aligned}        
    \end{equation}

    Since $O_i$, $O_j$ and $O_k$ locate on the same geodesic, thereby $2x'y'z'+1-x'^2-y'^2-z'^2=0$. Substitute \eqref{hypercycle} in it, we can solve that
    \begin{equation}%
        \frac{x^2+y^2+z^2-2xyz-1}{\Xi_{ijk}}=\sinh^2\rho'>0
    \end{equation}%
    Then the numerator is positive, which is contradicted to Formula \eqref{xi_delta}.

    Obviously the orthogonal circle cannot be outside from the unit dist.

    To sum up, the face $f_{ijk}$ has a compact orthogonal circle, that is why we call $\Xi_{ijk}$ the discriminant of compactness.
\end{proof}

\subsection{Weighted Voronoi inequality}

Secondly, we consider the inversive distance on a hinge $\Diamond_{ij;kl}$ embedded in $\mathbb{E}^2$. In this subsection, in case of too many subscripts, we denote $P=v_k,Q=v_i,R=v_l,S=v_j$ for vertices, $\odot P,\odot Q,\odot R,\odot S$ for circles, and $a=I_{ki},b=I_{il},c=I_{lj},d=I_{jk},e=I_{ij}$ for inversive distances. We denote that
\[
    p=\cosh r_k \quad q=\cosh r_i \quad r=\cosh r_l \quad s=\cosh r_j
\]
and
\[
    \hat{p}=\tanh r_k \quad \hat{q}=\tanh r_i \quad 
    \hat{r}=\tanh r_l \quad \hat{s}=\tanh r_j.
\]
If there is only one edge between two vertices without confusion, like $e_{ij}$ between $P$ and $Q$, then denote its hyperbolic length by $PQ$, etc. We denote the hyperbolic cosine of edge lengths as 
\begin{equation}\label{uvwxy}
    u=\cosh PQ,v=\cosh QR,w=\cosh RS,x=\cosh SP,y=\cosh QS.
\end{equation}

\begin{defn}\label{dht}
    Let $f$ satisfy the formula \eqref{f}, the \emph{local weighted Delaunay inequality} for the edge $e_{ij}$ is defined (in hyperbolic case) by
	\begin{equation}\label{ineq_h}
		\frac{\sqrt{\Delta _{bce}}}{\hat{p}} +
		\frac{\sqrt{\Delta _{ade}}}{\hat{r}}\le
		\frac{\sqrt{\Delta _{cdf}}}{\hat{q}} +
		\frac{\sqrt{\Delta _{abf}}}{\hat{s}}
	\end{equation}

    The set of $(\mathbf{I},\mathbf{r})$ satisfied the inequality for edge $e_{ij}$ of the triangulation $\mathcal{T}$, denote by $D_h(e_{ij};\mathcal{T})$, is a closed set in $\mathbb{R}_{>1}^{E(\mathcal{T})} \times \mathbb{R}_{>0}^V$. 
    
    The intersection of all these $\left|E\right|$ number of closed sets with respect to $\mathcal{T}$ is denoted by
    \begin{equation}\label{ineq_h2}
        D_h(\mathcal{T}) \coloneqq \bigcap_{e_{ij} \in E(\mathcal{T})}
        D_h(e_{ij};\mathcal{T}) \subset \mathbb{R}_{>1}^{E(\mathcal{T})}
        \times \mathbb{R}_{>0}^V.
    \end{equation}
\end{defn}

\begin{thm}\label{del_in_trig_h}
    Given a hyperbolic polyhedral surface $(S,V,d_h)$ with a weight function $r \in R(d_h)$, then the weight Delaunay triangulation $\mathcal{T}$ satisfies that every orthogonal circle on any face is compact. Namely, $D_h(\mathcal{T}) \subset \Xi(\mathcal{T})$.
\end{thm}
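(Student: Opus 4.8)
The plan is to show that the weighted Delaunay property of $\mathcal{T}$ forces $\Xi_{ijk}>0$ for every face $f_{ijk}$, by arguing contrapositively via the geometry of the orthogonal circle. Concretely, fix a face $f_{ijk}$ and lift its closed star to the universal cover, embedding the relevant circles $\odot v_i,\odot v_j,\odot v_k$ conformally into $\mathbb{E}^2$ (equivalently into $\mathbb{D}$), so that the orthogonal circle $\odot O_{ijk}$ is well defined as a Euclidean circle; by Theorem \ref{Xi_trig} it suffices to prove that this orthogonal circle is compact, i.e.\ lies inside $\mathbb{D}$. Suppose it is not. Then it either is a horocycle (tangent to $\partial\mathbb{D}$) or a hypercycle (meeting $\partial\mathbb{D}$ in two ideal points) or lies entirely outside; the last case is impossible since $\odot O_{ijk}$ is orthogonal to circles centered at interior points $v_i,v_j,v_k$, so it must cross $\mathbb{D}$.

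In the hypercycle (non-compact) case I would use the weighted Voronoi–Delaunay duality on the hinges surrounding $f_{ijk}$. If the orthogonal circle of $f_{ijk}$ escapes to the boundary, then for at least one of its three edges, say $e_{ij}$ shared with the neighboring face $f_{ijl}$, the Voronoi dual vertex $O_{ijk}$ moves to the far side — equivalently $h_{ij,k}<0$ and in fact $h_{ij,k}+h_{ij,l}<0$, so $e_{ij}$ fails the local weighted Delaunay condition of Definition \ref{loc_del}. But by Lemma \ref{gbh_del} this is exactly the failure of inequality \eqref{ineq_h} (equivalently \eqref{ineq_f} in the Euclidean-embedded model), contradicting $(\mathbf{I},\mathbf{r})\in D_h(\mathcal{T})$. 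More precisely, I want to track the signed distance $h_{ij,k}$ as a continuous function of $(\mathbf{I},\mathbf{r})$ along a path from a packing with all-compact orthogonal circles to the given one: the first moment some $\Xi_{ijk}$ hits $0$ is, by formula \eqref{xi_trig}, the moment one of $\widehat{PQ},\widehat{PR},\widehat{QR}$ equals the sum of the other two, which by the upper-half-plane picture in the proof of Theorem \ref{Xi_trig} is exactly when $O_{ijk}$ becomes ideal and $h$-values degenerate; I then show that just past this moment some edge of the Delaunay triangulation is no longer local weighted Delaunay, which contradicts the hypothesis that $\mathcal{T}$ is the weighted Delaunay triangulation for the given $(\mathbf{I},\mathbf{r})$.

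The cleanest route is probably to avoid the path argument and argue directly: since $\mathcal{T}$ is weighted Delaunay for $\mathbf{r}\in R(d_h)$, every edge is local weighted Delaunay (Theorem \ref{local_whole_h}), hence by Lemma \ref{gbh_del} every edge inequality \eqref{ineq_h} holds, i.e.\ $(\mathbf{I},\mathbf{r})\in D_h(\mathcal{T})$. Now for a fixed face $f_{ijk}$, sum or combine the three local weighted Delaunay inequalities around its three edges; the point is that these three inequalities together pin the dual vertex $O_{ijk}$ to lie in the interior region bounded by the three perpendicular bisector geodesics of Lemma \ref{gbh_sinh}, and this interior region is precisely the locus where the orthogonal circle stays compact — which by Theorem \ref{Xi_trig} is $\Xi_{ijk}>0$. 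Translating "$O_{ijk}$ lies on the correct side of all three bisectors" into the algebraic inequality $\Xi_{ijk}>0$ is the technical heart: using \eqref{xi_trig}, non-compactness means one of $\widehat{PQ}\le\widehat{PR}+\widehat{QR}$ etc.\ fails, and I would show that this failure, combined with the hinge geometry, forces one of the $h_{ij,\cdot}$ to be negative enough to violate \eqref{ineq_h} on that edge.

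The main obstacle I anticipate is the bookkeeping that connects the \emph{global} weighted Delaunay condition (all edges local weighted Delaunay) to the \emph{per-face} compactness $\Xi_{ijk}>0$: a single edge inequality \eqref{ineq_h} controls the position of $O_{ijk}$ only relative to $O_{ijl}$ across that edge, so I must combine all three edge inequalities of $f_{ijk}$ and argue that there is no configuration with $\Xi_{ijk}\le 0$ yet all three edges local weighted Delaunay. I expect this to reduce, after substituting \eqref{f}, \eqref{delta} and the orthogonal-circle formulas of Section 3.1, to an algebraic inequality in $a,b,c,\hat p,\hat q,\hat r$ that can be certified the same way the triangle-inequality part of Theorem \ref{Xi_trig} was — by exhibiting it as a sum of manifestly nonnegative terms (cf.\ \eqref{xi_abc}, \eqref{xi_trig}) — and this is exactly the kind of computation the paper defers to attached code.
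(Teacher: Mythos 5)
There is a genuine gap, on two counts. First, both of your main routes lean on the geometric interpretation of the Delaunay inequality --- the signed distances $h_{ij,k}$ and the position of the dual vertex $O_{ijk}$ relative to the bisectors of Lemma \ref{gbh_sinh} --- but in this paper that interpretation is only available \emph{after} compactness is known: the lemma in the ``Weighted Delaunay condition'' subsection, which identifies \eqref{ineq_h} with $h_{ij,k}+h_{ij,l}\ge 0$, explicitly assumes that the orthogonal circles of both faces of the hinge are compact, and when $\odot O_{ijk}$ is a horocycle or hypercycle the point $O_{ijk}$ and the quantity $h_{ij,k}$ do not exist. (Lemma \ref{gbh_del}, which you cite, concerns the inequality \eqref{ineq_f} on the truncated surface, not \eqref{ineq_h}.) So arguing ``non-compactness forces some $h$ to be too negative, violating \eqref{ineq_h}'' is circular; the theorem must be proved working purely with the algebraic inequality \eqref{ineq_h}. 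Second, your ``cleanest route'' of combining the three edge inequalities of a single face cannot close on its own, because each hinge inequality only transfers information from one face of the hinge to the other, and the neighboring faces may themselves be degenerate. What is actually needed, and what the paper does, is a global extremal argument: it proves a hinge lemma stating that if the two auxiliary triangle inequalities $\widehat{QR}<\widehat{QS}+\widehat{SR}$ and $\widehat{SR}<\widehat{QS}+\widehat{QR}$ hold on one face and \eqref{ineq_h} holds on $e_{ij}$, then $\widehat{PQ}+\widehat{PS}>\widehat{QS}$ on the other face; one then chooses the edge of \emph{largest} auxiliary length among all faces with non-compact orthogonal circle, so that the neighbor's two hypotheses are automatic, and derives a contradiction. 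Without that maximal choice the per-face bookkeeping you describe does not terminate.

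You do correctly identify the right reduction --- by \eqref{xi_trig}, compactness of $\odot O_{ijk}$ is equivalent to the triangle inequalities for the auxiliary lengths $\widehat{PQ}=\sqrt{\hat p^2+\hat q^2+2c\hat p\hat q}$, etc. --- and that is indeed the first step of the paper's proof. But the ``technical heart'' you defer to a hoped-for sum-of-nonnegative-terms certificate is in fact a different kind of computation: the paper assumes $\widehat{PQ}+\widehat{PS}\le\widehat{QS}$, introduces the monotone function $\hat G(\hat p)$ of \eqref{G_p_infty} with its explicit unique positive root $\hat p_0$, deduces $\hat p<\hat p_0$ (and in one case $\hat r<\hat r_0$), and then shows case by case that the left side of \eqref{ineq_h} strictly exceeds the right side, contradicting the Delaunay hypothesis. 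As written, your proposal names the correct target but supplies neither the non-circular local lemma nor the global extremal step, so it does not yet constitute a proof.
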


\begin{proof}
    Define an \emph{auxiliary length} $\hat{\mathbf{l}} \in \mathbb{R}_{>0}^{E(\mathcal{T})}$ by
    \begin{equation}%
        \hat{\mathbf{l}} \colon E(\mathcal{T}) \to
        \mathbb{R}_{>0} \quad e_{ij} \mapsto
        \sqrt{\tanh^2r_i+\tanh^2r_j+2I_{ij}\tanh r_i \tanh r_j}.
    \end{equation}%
    By notations, for $P=v_k,Q=v_i$, the auxiliary length $\widehat{PQ}=\hat{\mathbf{l}}(e_{ki})=\sqrt{\hat{p}^2+\hat{q}^2+2a\hat{p}\hat{q}}$ and so on. By Formula \eqref{xi_trig}, we know that the compactness orthogonal circle is equivalent to the triangle inequalities for auxiliary length. We claimed that when $\widehat{QR}<\widehat{QS}+\widehat{SR}$, $\widehat{SR}<\widehat{QS}+\widehat{QR}$ and the local weighted Delaunay inequality \eqref{ineq_h} hold, we have $\widehat{PQ}+\widehat{PS}>\widehat{QS}$.

    Suppose that $\widehat{PQ}+\widehat{PS} \le \widehat{QS}$, let
    \begin{equation}\label{G_p_infty}
        \hat{G}(\hat{p})\coloneqq \hat{p}^2+(a\hat{q}+d\hat{s})\hat{p}-e\hat{q}\hat{s}
        +\sqrt{\hat{p}^2+2a\hat{p}\hat{q}+\hat{q}^2}
        \sqrt{\hat{p}^2+2d\hat{p}\hat{s}+\hat{s}^2}.
    \end{equation}

    Let
    \begin{equation}%
        \hat{p}_0\coloneqq\frac{(e^2-1)\hat{q}\hat{s}}
        {\sqrt{\hat{q}^2+\hat{s}^2+2e\hat{q}\hat{s}}\sqrt{\Delta_{ade}}
        +(ae+d)\hat{q}+(de+a)\hat{s}}.
    \end{equation}%
    Similar to Lemma 4.3 in \cite{zhu2023existence}, we can verify that $\hat{p}_0$ is the unique zero point of $\hat{G}$ on $\mathbb{R}_{>0}$.

    If $\hat{p}_0 \ge 1$, then we have $\tanh r_k<\hat{p}_0$, which is trivial since $\tanh r_k<1$. 
    
    If $0<\hat{p}_0<1$, Since
    \begin{equation}\label{G_p_0_1}
        \hat{G}(\hat{p})=\frac12(\widehat{PQ}+\widehat{PS}-\widehat{QS})(\widehat{PQ}+\widehat{PS}+\widehat{QS})
    \end{equation}
    holds when $0<\hat{p}_0<1$, by monotonically increasing of $\hat{G}$ and $\widehat{PQ}+\widehat{PS} \le \widehat{QS}$, we also have $0<\hat{p}<\hat{p}_0$.

    Next we discuss case by case. 

    For the first case, if $(b^2-1)\hat{q}^2-2(bc+e)\hat{q}\hat{s}+(c^2-1)\hat{s}^2 \le 0$, we can verify
    \[
        \widehat{QS}\sqrt{\Delta_{bce}} \ge be\hat{q}+c\hat{q}+ce\hat{s}+b\hat{s}>0
    \]
    by taking square power, then
    \[
    \begin{aligned}
        &\frac{\sqrt{\Delta_{bce}}}{\hat{p}}+\frac{\sqrt{\Delta_{ade}}}{\hat{r}}-
        \left(\frac{\sqrt{\Delta_{cdf}}}{\hat{q}}+
        \frac{\sqrt {\Delta _{abf}}}{\hat{s}}\right)\\
        >& \frac{\sqrt{\Delta _{bce}}}{\hat{p}_0}-
        \left(\frac{\sqrt{\Delta_{cdf}}}{\hat{q}}+\frac{\sqrt{\Delta_{abf}}}{\hat{s}}\right)\ge 0,
    \end{aligned}
    \]
    which is contradicted to \eqref{ineq_h}.
    
    For the second case, if $(b^2-1)\hat{q}^2-2(bc+e)\hat{q}\hat{s}+(c^2-1)\hat{s}^2 > 0$, then $\widehat{QS}\sqrt{\Delta_{bce}}<be\hat{q}+c\hat{q}+ce\hat{s}+b\hat{s}$. Let
    \[
        \hat{r}_0 \coloneqq \frac{(e^2-1)\hat{q}\hat{s}}{(be+c)\hat{q}+(ce+b)\hat{s}-
        \sqrt{\hat{q}^2+\hat{s}^2+2e\hat{q}\hat{s}}\sqrt{\Delta_{bce}}}.
    \]
    Similarly, we can conclude that $0<\hat{r}<\hat{r}_0$, then
    \[
        \frac{\sqrt {\Delta _{bce}}}{\hat{p}} + \frac{\sqrt {\Delta _{ade}}}{\hat{r}}>
        \frac{\sqrt {\Delta _{bce}}}{\hat{p}_0} + \frac{\sqrt {\Delta _{ade}}}{\hat{r}_0}=
        \frac{\sqrt {\Delta _{cdf}}}{\hat{q}} + \frac{\sqrt {\Delta _{abf}}}{\hat{s}}
    \]
    which is also contradicted to \eqref{ineq_h}.

    To sum up, we have $\widehat{PQ}+\widehat{PS}>\widehat{QS}$.

    If there is $(\mathbf{I},\mathbf{r}) \in D_h(\mathcal{T})$ such that for the inversive distance circle packing $\tilde{L}_h(\mathbf{I},\mathbf{r})$, there exist some faces of which the orthogonal circle is non-compact. By taking the edge with the largest auxiliary length among these faces, we can easily find the contradiction by the same reason of Theorem 4.4 in \cite{zhu2023existence}. Thus, $D_h(\mathcal{T}) \subset \Xi(\mathcal{T})$.
\end{proof}

\subsection{Weighted Delaunay condition}

\begin{lem}
    With the notations above, when the orthogonal circles of two faces of the hinge $\Diamond_{ij;kl}$ are compact, the edge $e_{ij}$ is local weighted Delaunay if and only if the inequality \eqref{ineq_h} holds, where $f$ satisfies \eqref{f}.
\end{lem}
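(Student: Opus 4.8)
The plan is to show that, under the compactness hypothesis, $\sinh(h_{ij,k}+h_{ij,l})$ is a manifestly positive multiple of the difference between the right- and left-hand sides of \eqref{ineq_h}; the argument is modelled on the Euclidean Lemma \ref{gbh_del}, with the compactness of the two orthogonal circles taking the place of the elementary planimetry used there. The first step is to reduce the local weighted Delaunay condition to the relative position of $O_{ijk}$ and $O_{ijl}$ along a single geodesic. Because both orthogonal circles are compact, Theorem \ref{xi_thm} gives $\cosh d(O_{ijk},v_i)/\cosh r_i=\cosh\rho_{ijk}=\cosh d(O_{ijk},v_j)/\cosh r_j$, and likewise for $O_{ijl}$, so both centres lie on the weighted bisector $\ell_{ij}=\{\,q:\cosh d(q,v_i)\cosh r_j=\cosh d(q,v_j)\cosh r_i\,\}$ of the pair $\{v_i,v_j\}$. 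A direct computation in the Poincar\'e disc shows $\ell_{ij}$ is a geodesic, and reflecting in $e_{ij}$ (which fixes $v_i$ and $v_j$) shows it is orthogonal to $e_{ij}$; let $M_{ij}$ be their intersection, whose distances to $v_i$ and $v_j$ depend only on $r_i,r_j$ and $l_{ij}=\arcch(\cosh r_i\cosh r_j+I_{ij}\sinh r_i\sinh r_j)$ and so are shared by the two faces. By Definition \ref{loc_del}, $h_{ij,k}$ and $h_{ij,l}$ are then the signed arclengths along $\ell_{ij}$ from $M_{ij}$ to $O_{ijk}$ and to $O_{ijl}$, taken with the two orientations pointing to the $v_k$- and $v_l$-side of $e_{ij}$; hence ``$e_{ij}$ is local weighted Delaunay'' is equivalent to $h_{ij,k}+h_{ij,l}\ge 0$, i.e.\ to $\sinh(h_{ij,k}+h_{ij,l})\ge 0$.

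The second step is to make this explicit. From the right hyperbolic triangle $O_{ijk}M_{ij}v_i$ one gets $\cosh h_{ij,k}=\cosh\rho_{ijk}\cosh r_i/\cosh d(M_{ij},v_i)$, and the two such relations over $v_i$ and $v_j$ determine $\cosh d(M_{ij},v_i)$ and $\cosh d(M_{ij},v_j)$ from $r_i,r_j,I_{ij}$. Substituting the compact-orthogonal-circle identities \eqref{radius_cosh}, \eqref{radius_sinh2}, \eqref{xi_delta} and the factorization \eqref{xi_trig} of $\Xi_{ijk}$ gives $\sinh\rho_{ijk}=\tanh r_i\tanh r_j\tanh r_k\sqrt{\Delta_{ade}}/(4A_{ijk})$, where $a=I_{ki},d=I_{jk},e=I_{ij}$ and $A_{ijk}>0$ is the Euclidean area of the (nondegenerate, since $\Xi_{ijk}>0$) triangle whose sides are the auxiliary lengths of $f_{ijk}$, and likewise $\sinh\rho_{ijl}=\tanh r_i\tanh r_j\tanh r_l\sqrt{\Delta_{bce}}/(4A_{ijl})$ with $b=I_{il},c=I_{lj}$. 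Feeding these back makes $\sinh h_{ij,k}$, $\sinh h_{ij,l}$, and hence $\sinh(h_{ij,k}+h_{ij,l})=\sinh h_{ij,k}\cosh h_{ij,l}+\cosh h_{ij,k}\sinh h_{ij,l}$, explicit algebraic functions of $\hat p=\tanh r_k$, $\hat q=\tanh r_i$, $\hat r=\tanh r_l$, $\hat s=\tanh r_j$ and of $a,b,c,d,e$. Clearing the common positive denominator and rewriting $\sqrt{\Delta_{abf}}$ and $\sqrt{\Delta_{cdf}}$ through $\sqrt{\Delta_{ade}},\sqrt{\Delta_{bce}}$ by the generalized Ptolemy relations \eqref{abcdef} and \eqref{delta}, together with a factorization of the $\hat G$-type appearing in \eqref{G_p_0_1}, should collapse $\sinh(h_{ij,k}+h_{ij,l})$ into a positive multiple of $\frac{\sqrt{\Delta_{cdf}}}{\hat q}+\frac{\sqrt{\Delta_{abf}}}{\hat s}-\frac{\sqrt{\Delta_{bce}}}{\hat p}-\frac{\sqrt{\Delta_{ade}}}{\hat r}$, which is the claim.

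The hard part is this last collapse. Beyond the algebraic identity itself --- which, as with the computations flagged in Section \ref{section4}, is safest confirmed by computer algebra after the substitutions around \eqref{xi_abc} --- the sign bookkeeping is the real obstacle: $M_{ij}$ can lie outside the segment $e_{ij}$ and either of $h_{ij,k},h_{ij,l}$ can be negative, so one must check, case by case, that the terms of $\sinh h_{ij,k}\cosh h_{ij,l}+\cosh h_{ij,k}\sinh h_{ij,l}$ assemble with the correct signs and that the leftover factor is genuinely positive (this uses $\Xi_{ijk},\Xi_{ijl}>0$ and $0<\hat p,\hat q,\hat r,\hat s<1$). A cross-check, and an alternative bypassing the hyperbolic trigonometry, is to use that compactness embeds the whole hinge conformally in $\mathbb E^2$ (cf.\ Theorem \ref{Xi_trig}), where it becomes a genuine Euclidean inversive-distance hinge with radii $\hat p,\hat q,\hat r,\hat s$, edge lengths the auxiliary lengths, and the same inversive distances; one then needs only that conformal inclusion preserves the sign of the signed distance from the orthogonal centre to the edge, after which \eqref{ineq_h} is precisely the Euclidean local weighted Delaunay inequality and the lemma follows from the Euclidean counterpart of Lemma \ref{gbh_del}.
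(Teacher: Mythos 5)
Your setup overlaps substantially with the paper's computation: the paper also places $O_{ijk}$ and $O_{ijl}$ on a common geodesic orthogonal to $e_{ij}$, derives $\sinh h_{ij,k}\sqrt{(y^2-1)\Xi_{ijk}}=(xy-u)q+(uy-x)s-(y^2-1)p=:N_k$ (and similarly $N_l$), and uses \eqref{radius_tanh} to convert \eqref{ineq_h} into a statement about the $h$'s. The gap is in your pivot claim that $\sinh(h_{ij,k}+h_{ij,l})$ is a \emph{manifestly positive multiple} of the difference of the two sides of \eqref{ineq_h}: this is not an identity, and no computer-algebra ``collapse'' will produce it. Writing $V_k:=\cosh^2\rho_{ijk}\,\Xi_{ijk}$ and $W_k:=\sinh^2\rho_{ijk}\,\Xi_{ijk}$, the addition formula together with the common ratio $C:=\cosh h_{ij,k}/\cosh\rho_{ijk}=\cosh h_{ij,l}/\cosh\rho_{ijl}$ gives
\begin{equation*}
\sinh(h_{ij,k}+h_{ij,l})=\frac{C\,(N_k\sqrt{V_l}+N_l\sqrt{V_k})}{\sqrt{y^2-1}\,\sqrt{\Xi_{ijk}\Xi_{ijl}}},
\end{equation*}
whereas the difference of the two sides of \eqref{ineq_h} is a positive multiple of $\frac{\sinh h_{ij,k}}{\sinh\rho_{ijk}}+\frac{\sinh h_{ij,l}}{\sinh\rho_{ijl}}$, i.e.\ of $N_k\sqrt{W_l}+N_l\sqrt{W_k}$. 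These two expressions are not proportional: specializing $N_l=0$ versus $N_k=0$ yields ratios $\coth\rho_{ijl}$ versus $\coth\rho_{ijk}$ (up to a common positive constant), which differ in general. They do share a sign, but proving that — precisely in the mixed-sign case $N_k>0>N_l$ you relegate to ``sign bookkeeping'' — is the entire content of the hard step, and it is not an algebraic factorization.

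The missing idea is the one the paper uses to close exactly this gap: since $O_{ijk}$ and $O_{ijl}$ lie on the same perpendicular to $e_{ij}$, the quantity $C=\cosh h/\cosh\rho$ above is a common constant less than $1$ for the two faces, so $h\mapsto\sinh h/\sinh\rho(h)$ with $\cosh\rho(h)=\cosh h/C$ is a strictly increasing odd function of $h$; hence $\frac{\sinh h_{ij,k}}{\sinh\rho_{ijk}}+\frac{\sinh h_{ij,l}}{\sinh\rho_{ijl}}\ge 0$ if and only if $h_{ij,k}+h_{ij,l}\ge 0$, with no case analysis. Your proposed Euclidean cross-check does not repair the gap either: in the Poincar\'e disc the image of the hyperbolic edge $e_{ij}$ is not the Euclidean segment between the Euclidean centres of $\odot v_i,\odot v_j$, and the Euclidean centre of the common orthogonal circle is not the image of its hyperbolic centre, so the assertion that ``conformal inclusion preserves the sign of the signed distance from the orthogonal centre to the edge'' is itself the statement that would have to be proved.
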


\begin{proof}    
    Substitute \eqref{delta} into \eqref{ineq_h} and simplify, we have
    \begin{equation}%
        \begin{aligned}
            &\hat{p}((be+c)\hat{q}\hat{r}+(ce+b)\hat{r}\hat{s}-(e^2-1)
            \hat{q}\hat{s})\sqrt{\Delta_{ade}}\\
            +&\hat{r}((ae+d)\hat{p}\hat{q}+(de+a)\hat{p}\hat{s}-(e^2-1)
            \hat{q}\hat{s})\sqrt{\Delta_{bce}} \ge 0.
        \end{aligned}
    \end{equation}%

    Recall the notations in \ref{uvwxy}, we have
    \begin{equation}%
        \begin{aligned}
            &(ae+d)\hat{p}\hat{q}+(de+a)\hat{p}\hat{s}-(e^2-1)\hat{q}\hat{s})\\
            =&\frac{(xy-u)q+(uy-x)s-(y^2-1)p}{pqs\sqrt{q^2-1}\sqrt{s^2-1}}\\
            &(be+c)\hat{q}\hat{r}+(ce+b)\hat{r}\hat{s}-(e^2-1)\hat{q}\hat{s})\\
            =&\frac{(wy-v)q+(vy-w)s-(y^2-1)r}{qrs\sqrt{q^2-1}\sqrt{s^2-1}}.
        \end{aligned}
    \end{equation}%

    In \cite{zhang2014unified}, we already know that
    \begin{equation}%
        (y^2-1)\sinh^2 h_{ij,k}=(2qsy-q^2-s^2)\cosh^2 \rho_{ijk}+1-y^2.
    \end{equation}%
    Keep computing, then
    \begin{equation}%
        (y^2-1)\sinh^2 h_{ij,k}=\frac{((xy-u)q+(uy-x)s-(y^2-1)p)^2}{\Xi_{ijk}}.
    \end{equation}%

    Take square root of both sides, and determine the sign of $h_{ij,k}$, we have
    \begin{equation}%
        \sinh h_{ij,k}\sqrt{(y^2-1)\Xi_{ijk}}=(xy-u)q+(uy-x)s-(y^2-1)p
    \end{equation}%
    and
    \begin{equation}%
        \sinh h_{ij,l}\sqrt{(y^2-1)\Xi_{ijl}}=(wy-v)q+(vy-w)s-(y^2-1)r
    \end{equation}%
    similarly.

    So \eqref{ineq_h} is equivalent to
    \begin{equation}%
        \hat{p}\frac{\sinh h_{ij,l}\sqrt{(y^2-1)\Xi_{ijl}}}
        {qrs\sqrt{q^2-1}\sqrt{s^2-1}}\sqrt{\Delta_{ade}}
        +\hat{r}\frac{\sinh h_{ij,k}\sqrt{(y^2-1)\Xi_{ijk}}}
        {pqs\sqrt{q^2-1}\sqrt{s^2-1}}\sqrt{\Delta_{bce}} \ge 0
    \end{equation}%
    which is
    \begin{equation}%
        \frac{\sinh h_{ij,k}\sqrt{\Xi_{ijk}}}{\sqrt{p^2-1}\sqrt{\Delta_{ade}}} +
        \frac{\sinh h_{ij,l}\sqrt{\Xi_{ijl}}}{\sqrt{r^2-1}\sqrt{\Delta_{bce}}}
        \ge 0
    \end{equation}%

    By \eqref{radius_tanh}, we have
    \begin{equation}%
        \frac{\sinh h_{ij,k}}{\sinh \rho_{ijk}} +
        \frac{\sinh h_{ij,l}}{\sinh \rho_{ijl}}\ge 0.
    \end{equation}%

    Since the geodesic connecting $O_{ijk}$ and $O_{ijl}$ is perpendicular to $e_{ij}$ and intersect on $e_{ij}$, by \eqref{cos} we know
    \begin{equation}%
        \frac{\cosh h_{ij,k}}{\cosh \rho_{ijk}} =
        \frac{\cosh h_{ij,l}}{\cosh \rho_{ijl}} <1.
    \end{equation}%

    Therefore, $\frac{\sinh h_{ij,k}}{\sinh \rho_{ijk}}$ is a strictly increasing odd function of $h_{ij,k}$. Thus, the inequality \eqref{ineq_h} is equivalent to $h_{ij,k}+h_{ij,l} \ge 0$
\end{proof}

Similar to Theorem 4.6 in \cite{zhu2023existence}, using the above lemma, the following theorem can be generalized without modification. In the proof we need the existence of compact orthogonal circle of faces on $\mathcal{T}$, which is guaranteed by Theorem \ref{del_in_trig_h}.

\begin{thm}\label{whole_del_h}
    Given a hyperbolic polyhedral surface $(S,V,d_h)$ with a weight function $r \in R(d_h)$, the triangulation $\mathcal{T}$ is weighted Delaunay if and only if there exist $(\mathbf{I},\mathbf{r}) \in D_h(\mathcal{T})$ such that $\Psi_\mathcal{T} \circ L_h(\mathbf{I},\mathbf{r})=[d_h]$.
\end{thm}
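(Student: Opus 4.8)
The plan is to transcribe the proof of Theorem~4.6 of \cite{zhu2023existence}, replacing its Euclidean local-Delaunay equivalence by the lemma just established and invoking Theorem~\ref{del_in_trig_h} at the one place where the compactness of orthogonal circles is not a hypothesis but must be deduced. Both directions rest on the following observation: once the orthogonal circles $\odot O_{ijk}$ and $\odot O_{ijl}$ of a hinge $\Diamond_{ij;kl}$ are compact, the lemma above makes the inequality \eqref{ineq_h} for $e_{ij}$ equivalent to $h_{ij,k}+h_{ij,l}\ge 0$, i.e.\ to $e_{ij}$ being local weighted Delaunay in the sense of Definition~\ref{loc_del}; combined with Theorem~\ref{local_whole_h}, this identifies membership of $(\mathbf{I},\mathbf{r})$ in $D_h(\mathcal{T})$ with all edges of $\mathcal{T}$ being local weighted Delaunay, \emph{provided} all faces already carry compact orthogonal circles.

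For the ``only if'' direction, assume $\mathcal{T}$ is weighted Delaunay for $(S,V,d_h)$ with weight $\mathbf{r}\in R(d_h)$. As $\mathcal{T}$ is in particular a geodesic triangulation, define $I_{ij}$ by \eqref{invhyp} from the circles of radii $r_i,r_j$ at the ends of $e_{ij}$; the condition $r_i+r_j<d_h(v_i,v_j)\le\ell(e_{ij})$ gives $\mathbf{I}\in\mathbb{R}_{>1}^{E(\mathcal{T})}$, and by construction $L_h(\mathbf{I},\mathbf{r})$ is the edge-length function of $d_h$, so $\Psi_\mathcal{T}\circ L_h(\mathbf{I},\mathbf{r})=[d_h]$. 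Because $\mathcal{T}$ is weighted Delaunay, each dual vertex $O_{ijk}$ is a genuine $0$-cell of the weighted Voronoi decomposition on $S$ (Theorem~\ref{vor_f}), so every $\rho_{ijk}>0$ and every orthogonal circle is compact; by Theorem~\ref{local_whole_h} every edge is local weighted Delaunay. The lemma then yields \eqref{ineq_h} for each $e_{ij}$, hence $(\mathbf{I},\mathbf{r})\in D_h(\mathcal{T})$.

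For the ``if'' direction, let $(\mathbf{I},\mathbf{r})\in D_h(\mathcal{T})$ with $\Psi_\mathcal{T}\circ L_h(\mathbf{I},\mathbf{r})=[d_h]$. By Theorem~\ref{del_in_trig_h} we have $D_h(\mathcal{T})\subset\Xi(\mathcal{T})$, so by Theorem~\ref{Xi_trig} each face satisfies the triangle inequalities and has a compact orthogonal circle; in particular $\mathcal{T}$ is a geodesic triangulation of $(S,V,d_h)$ and every $\rho_{ijk}$ is defined and positive. With this compactness, the lemma turns \eqref{ineq_h} for each $e_{ij}$ into $h_{ij,k}+h_{ij,l}\ge 0$, i.e.\ every edge of $\mathcal{T}$ is local weighted Delaunay, and the second item of Theorem~\ref{local_whole_h} concludes that $\mathcal{T}$ is weighted Delaunay with respect to $\mathbf{r}$. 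The pair $(\mathbf{I},\mathbf{r})$ realizing $([d_h],\mathbf{r})$ is moreover unique by Proposition~\ref{lift_inject}.

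The step I expect to be the genuine obstacle is the one the authors flag explicitly: the lemma's equivalence is only valid under compactness of the hinge's orthogonal circles, and in the ``if'' direction this cannot be assumed — it is exactly the revised weighted Delaunay inequality of Theorem~\ref{del_in_trig_h} (whose proof is the new hyperbolic content of Section~\ref{section3}) that supplies $D_h(\mathcal{T})\subset\Xi(\mathcal{T})$, so that the argument is not circular. Everything else is a faithful copy of the Euclidean proof, with $\Psi_\mathcal{T}$ and the hyperbolic cosine law in place of their Euclidean counterparts.
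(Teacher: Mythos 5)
Your proposal is correct and follows exactly the route the paper takes: the paper's own proof of Theorem \ref{whole_del_h} is just the remark that the argument of Theorem 4.6 of \cite{zhu2023existence} carries over once the preceding lemma gives the equivalence of \eqref{ineq_h} with $h_{ij,k}+h_{ij,l}\ge 0$ under compact orthogonal circles, with Theorem \ref{del_in_trig_h} supplying that compactness. You have simply written out the transcription the paper leaves implicit, and you correctly identify the inclusion $D_h(\mathcal{T})\subset\Xi(\mathcal{T})$ as the step that keeps the ``if'' direction from being circular.
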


\section{Diffeomorphism between Teichm\"uller spaces}\label{section4}

\subsection{Homeomorphism between Teichm\"uller spaces}

Recall that given a marked surface $(S,V)$, the Teichm\"uller space of hyperbolic polyhedral metric is
\[
    Teich_h(S,V)=\bigcup_\mathcal{T}\Psi_\mathcal{T}
    (\mathbb{R}^{E(\mathcal{T})}_\Delta).
\]
Then we attach the weight $R$ with respect to $R(d_h)$ Teichm\"uller spaces. Define that
\[
    \widetilde{Teich}_h(S,V) \coloneqq \left\{([d_h],\mathbf{r})
    \mid [d_h]\in Teich_h(S,V), \mathbf{r} \in R(d_h) \right\}
\]
and the map
\begin{equation}%
    \tilde{\Psi}_\mathcal{T} \colon Q_h(\mathcal{T}) \subset
    \mathbb{R}_{>1}^{E(\mathcal{T})} \times \mathbb{R}_{>0}^V \to
    Teich_h(S,V) \times \mathbb{R}_{>0}^V \quad (\mathbf{I},\mathbf{r}) 
    \mapsto (\Psi_\mathcal{T} \circ L_h(\mathbf{I},\mathbf{r}),\mathbf{r}).
\end{equation}%
Moreover,
\begin{equation}%
    \tilde P_h(\mathcal{T}) \coloneqq \tilde\Psi_\mathcal{T}(Q_h(\mathcal{T})).
\end{equation}%

Since $\Psi_\mathcal{T} \colon \mathbb{R}^{E(\mathcal{T})}_\Delta \to P_h(\mathcal{T})$ is a homeomorphism, by Proposition \ref{lift_inject}, we know $\tilde\Psi_\mathcal{T}\colon Q_h(\mathcal{T}) \to \tilde P_h(\mathcal{T})$ is a real analytic homeomorphism.

Let $\pi([d_h],\mathbf{r})=[d_f]$. Since $L_h(Q_h\mathcal{T})=\mathbb{R}^{E(\mathcal{T})}_\Delta$, we know $\pi(\tilde P_h(\mathcal{T}))=\Psi_\mathcal{T} \circ L_h(Q_h(\mathcal{T}))=P_h(\mathcal{T})$.

Recall the Teichm\"uller space of hyperbolic surface with geodesic boundaries as well. Suppose a surface $\Sigma$ with $\chi(\Sigma)<0$ and a marked surface $(S,V)$ are related, given a triangulation $\mathcal{T}=(V,E,F)$, for any length coordinate $\mathbf{x} \in \mathbb{R}_{>0}^{E(\mathcal{T})}$, there is an equivalent class up to an isometry isotopic to the identity on $\Sigma$ with a representative element $d$, which is $[d]=\Omega_\mathcal{T}(\mathbf{x}) \in Teich(\Sigma)$.

To attach the weight on the Teichm\"uller space, we define
\begin{equation}\label{omega_h}
    \tilde{\Omega}_\mathcal{T} \colon \mathbb{R}_{>1}^{E(\mathcal{T})}
    \times \mathbb{R}_{>0}^V \to Teich(\Sigma) \times 
    (0,1)^V \quad (\mathbf{I},\mathbf{r}) \mapsto
    (\Omega_\mathcal{T}\circ \arcch(\mathbf{I}),\tanh(\mathbf{r})),
\end{equation}
which is a homeomorphic map by Theorem \ref{teich home formula}.

With these notations, by Theorem \ref{del_in_trig_h} and Theorem \ref{whole_del_h} we know that.
\[
    \tilde\Psi_\mathcal{T}(D_h(\mathcal{T}))\subset\widetilde{Teich}_h(S,V)
\]

Considering Theorem \ref{local_whole_h} and Theorem \ref{gbh_unique}, we have
\begin{equation}\label{union_teich_h}
    \begin{aligned}
        \widetilde{Teich}_h(S,V)&=\bigcup_\mathcal{T}
        \tilde\Psi_\mathcal{T}(D_h(\mathcal{T}))
        \\
        Teich(\Sigma) \times (0,1)^V&=\bigcup_\mathcal{T}
        \tilde\Omega_\mathcal{T}(D_h(\mathcal{T})).
    \end{aligned}
\end{equation}

Now we define
\begin{equation}\label{B_T}
    B_\mathcal{T} \coloneqq \tilde\Omega_\mathcal{T} \circ 
    \tilde\Psi_\mathcal{T}^{-1} \colon \tilde P_h(\mathcal{T})
    \to Teich(\Sigma) \times (0,1)^V,
\end{equation}
which is a composition of two real analytic homeomorphism. Since the image of $\tilde\Psi_\mathcal{T}^{-1}$ is a subset of the domain of $\tilde\Omega_\mathcal{T}$, we know $B_\mathcal{T}$ is a real analytic injective map.

For two different triangulations $\mathcal{T}$ and $\mathcal{T}'$, if $P_h(\mathcal{T})\cup P_h(\mathcal{T}') \ne \varnothing$, let $([d_h],\mathbf{r})\in\tilde P_h(\mathcal{T})\cup P_h(\mathcal{T}')$, generally, $B_\mathcal{T}([d_h],\mathbf{r}) \ne B_{\mathcal{T}'}([d_h],\mathbf{r})$. However, we have the following lemma:
\begin{lem}\label{intersection}
    Given two triangulations $\mathcal{T}$ and $\mathcal{T}'$ of the surface $(S,V)$ such that $\tilde\Psi_\mathcal{T}(D_h(\mathcal{T})) \cap \tilde\Psi_{\mathcal{T}'}(D_h(\mathcal{T}')) \ne \varnothing$, suppose $([d_h],\mathbf{r}) \in \tilde\Psi_\mathcal{T}(D_h(\mathcal{T})) \cap \tilde\Psi_{\mathcal{T}'}(D_h(\mathcal{T}'))$, then
    \[
        B_\mathcal{T}([d_h],\mathbf{r})=B_{\mathcal{T}'}([d_h],\mathbf{r}).
    \]
\end{lem}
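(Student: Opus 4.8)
The plan is to reduce the claim to a statement about edge flips, then invoke the local flip formulas already established. Since $([d_h],\mathbf{r})$ lies in both $\tilde\Psi_\mathcal{T}(D_h(\mathcal{T}))$ and $\tilde\Psi_{\mathcal{T}'}(D_h(\mathcal{T}'))$, Theorem \ref{whole_del_h} says that both $\mathcal{T}$ and $\mathcal{T}'$ are weighted Delaunay triangulations of $(S,V,d_h)$ with respect to $\mathbf{r}$. By Theorem \ref{local_whole_h} (the uniqueness clause) the weighted Delaunay triangulation is unique up to a finite sequence of diagonal switches within weighted Delaunay polygons; hence $\mathcal{T}$ and $\mathcal{T}'$ differ by a finite chain of edge flips $\mathcal{T}=\mathcal{T}_0 \to \mathcal{T}_1 \to \cdots \to \mathcal{T}_m=\mathcal{T}'$, where each intermediate $\mathcal{T}_i$ is itself weighted Delaunay for $([d_h],\mathbf{r})$, so that $([d_h],\mathbf{r}) \in \tilde\Psi_{\mathcal{T}_i}(D_h(\mathcal{T}_i))$ for every $i$. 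It therefore suffices to treat a single flip, i.e. to show $B_\mathcal{T}([d_h],\mathbf{r})=B_{\mathcal{T}'}([d_h],\mathbf{r})$ when $\mathcal{T}'$ is obtained from $\mathcal{T}$ by flipping one edge $e_{ij}$ inside a weighted Delaunay quadrilateral.

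For a single flip, unravelling the definition \eqref{B_T} gives $B_\mathcal{T}([d_h],\mathbf{r})=\tilde\Omega_\mathcal{T}(\mathbf{I},\mathbf{r})=(\Omega_\mathcal{T}\circ\arcch(\mathbf{I}),\tanh\mathbf{r})$, where $(\mathbf{I},\mathbf{r})=\tilde\Psi_\mathcal{T}^{-1}([d_h],\mathbf{r})$; similarly $B_{\mathcal{T}'}([d_h],\mathbf{r})=(\Omega_{\mathcal{T}'}\circ\arcch(\mathbf{I}'),\tanh\mathbf{r})$ with $(\mathbf{I}',\mathbf{r})=\tilde\Psi_{\mathcal{T}'}^{-1}([d_h],\mathbf{r})$. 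The second coordinate agrees on the nose since $\mathbf{r}$ is the same. For the first coordinate I would argue that it is precisely the length coordinate $\mathbf{x}\in\mathbb{R}_{>0}^{E}$ of $(\Sigma,d)$ with respect to $\mathcal{T}$ (respectively $\mathcal{T}'$), where $(\Sigma,d)$ is the hyperbolic surface with geodesic boundaries related to $(S,V,d_h)$ via the weighted Voronoi-Delaunay construction — the edge lengths $x_e=\arcch(I_e)$ are the lengths of the common perpendiculars between the boundary circles, and since both $\mathcal{T}$ and $\mathcal{T}'$ are geodesic truncated triangulations of the \emph{same} $(\Sigma,d)$ (both being weighted Delaunay for $([d_h],\mathbf{r})$), we have $\Omega_\mathcal{T}(\arcch\mathbf{I})=[d]=\Omega_{\mathcal{T}'}(\arcch\mathbf{I}')$ by the very definition of $\Omega$ as a chart on $Teich(\Sigma)$ and the consistency of charts under the flip transformation of Theorem \ref{teich home formula}.

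Concretely, the verification comes down to checking that $\arcch\mathbf{I}'$ is the image of $\arcch\mathbf{I}$ under the coordinate-change map $\Omega_{\mathcal{T}'}^{-1}\circ\Omega_\mathcal{T}$ of Theorem \ref{teich home formula}; that is, with $a,b,c,d,e$ the hyperbolic cosines of the edge lengths around the flipped hinge as in that theorem, the new diagonal has length $\arcch f$ with $f$ given by \eqref{f}. This is exactly the generalized Ptolemy relation \eqref{abcdef}, and on the circle-packing side it is the statement that the new inversive distance $I'_{kl}$ across the flipped edge is the one produced by the surgery formula. So the key step is: the inversive distance assigned to the new edge by $\tilde\Psi_{\mathcal{T}'}^{-1}$ matches the value $f$ predicted by \eqref{f} applied to $\arcch\mathbf{I}$. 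I expect this to be the main obstacle — one must confirm that the metric $d_h$ being fixed forces the auxiliary hyperbolic-boundary surface $(\Sigma,d)$ to be the same for $\mathcal{T}$ and $\mathcal{T}'$ (this uses uniqueness in Theorems \ref{gbh_vor}, \ref{gbh_unique} and the fact that the weighted Voronoi decomposition restricted to $\Sigma$ is a genuine invariant of $([d_h],\mathbf{r})$, not of the triangulation), and then that the chart transition of Theorem \ref{teich home formula} is literally the map carrying one length coordinate of $(\Sigma,d)$ to the other. Once the identification of the auxiliary surface is in place, the equality $B_\mathcal{T}=B_{\mathcal{T}'}$ on the overlap is immediate from the cocycle property of the $\Omega_\mathcal{T}$ charts. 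I would also remark that this lemma is what makes the $B_\mathcal{T}$'s glue into a single globally defined real analytic map $\widetilde{Teich}_h(S,V)\to Teich(\Sigma)\times(0,1)^V$, which is the diffeomorphism promised by the section title.
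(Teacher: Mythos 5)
Your reduction is the same as the paper's: both triangulations are weighted Delaunay for $([d_h],\mathbf{r})$ by Theorem \ref{whole_del_h}, they differ by finitely many diagonal switches by Theorem \ref{local_whole_h}, the second coordinate $\tanh\mathbf{r}$ is untouched, and everything comes down to checking, for a single flip of a hinge $\Diamond_{ij;kl}$, that the inversive distance which $\tilde\Psi_{\mathcal{T}'}^{-1}([d_h],\mathbf{r})$ assigns to the new diagonal equals the value $f$ of Formula \eqref{f} computed from the old data. Up to that point you are correct. But that last step is exactly where your proposal stops: you name it ``the key step'' and ``the main obstacle'' and then do not prove it, so the proof is incomplete precisely at its only nontrivial point.

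Moreover, the shortcut you offer in its place is circular. You argue that $\Omega_\mathcal{T}(\arcch\mathbf{I})$ and $\Omega_{\mathcal{T}'}(\arcch\mathbf{I}')$ are both length coordinates of ``the same'' bordered hyperbolic surface $(\Sigma,d)$ canonically attached to $([d_h],\mathbf{r})$. No such canonical attachment is available at this stage: the passage from inversive distances to length coordinates is purely formal (apply $\arcch$ edgewise and feed the result to $\Omega_\mathcal{T}$), and the assertion that the resulting point of $Teich(\Sigma)$ does not depend on the choice of weighted Delaunay triangulation \emph{is} the statement of the lemma. You cannot invoke it to prove it. The paper closes the gap geometrically on the polyhedral side: since both $\mathcal{T}$ and $\mathcal{T}'$ are weighted Delaunay, the dual vertices $O_{ijk}$ and $O_{ijl}$ of the two faces of the hinge coincide, so after developing the hinge into $\mathbb{D}$ the four vertex circles admit a \emph{common} orthogonal circle, which is compact by Theorem \ref{del_in_trig_h}. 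Normalizing by an element of $\mathrm{Aut}(\mathbb{D})$ (which preserves inversive distances) so that this orthogonal circle is centered at the origin, the configuration of four circles is rigid up to rotation, and the six pairwise inversive distances of four circles sharing a compact orthogonal circle satisfy the generalized Ptolemy equation \eqref{abcdef}; the branch is fixed so that the diagonal value is given by \eqref{f}. That argument, or an equivalent computation, is what your proposal still needs.
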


\begin{proof}
    By Theorem \ref{whole_del_h} we know that $\mathcal{T}$ and $\mathcal{T}'$ are all weight Delaunay triangulation of $(S,V,d_h)$ with the weight $\mathbf{r}$. By Theorem \ref{local_whole_h} we know that they differ by a finite number of edge switches.

    Without loss of generality, we suppose that $\mathcal{T}$ and $\mathcal{T}'$ differ by one switch of the hinge $\Diamond_{ij;kl}$. 

    With the notations in Formula \eqref{ineq_h}, the components of $\Psi_{\mathcal{T}'}^{-1}([d],\mathbf{r})$ at $e_{ki}$, $e_{il}$, $e_{lj}$, $e_{jk}$, $v_k$, $v_i$, $v_l$ and $v_j$ are equal to the ones of $\Psi_\mathcal{T}^{-1}([d],\mathbf{r})$, while the inversive distance at $e_{kl}$ is denoted by $f$. By Theorem \ref{teich home formula}, to prove $B_\mathcal{T}([d_h],\mathbf{r})=B_{\mathcal{T}'}([d_h],\mathbf{r})$, we only need to show that $f$ satisfies Formula \eqref{f}.

    The hinge $\Diamond_{ij;kl}$ being able to switch means $O_{ijk}$ and $O_{ijl}$ coincide. After developing them onto $\mathbb{D} \cong \mathbb{H}^2$, the four circles at vertices share a common orthogonal circle, which is compact by Theorem \ref{del_in_trig_h}.

    The automorphism of $\mathbb{D}$ is an isometry of hyperbolic plane, and also a conformal map of Euclidean plane. By an automorphism, we can construct the map such that the Euclidean center of the orthogonal circle locate at the center of $\mathbb{D}$. From \cite{guo2011local} we know the inversive distances are invariant under $\mbox{Aut}(\mathbb{D})$. The orthogonal circle are invariant change by switching, so are the radii of them. By this assumption, the image of isometries from $\Diamond_{ij;kl}$ and $\Diamond_{ij;kl}$ are rigid and differ by a rotation. By definition of inversive distance on six edges between four circles, we know that $f$ satisfies Formula \eqref{f}.

    For the case that $\mathcal{T}$ and $\mathcal{T}'$ differ by a finite number of switches, just repeat the step above. The image of $([d_f],\mathbf{r})$ does not change under different mappings.
\end{proof}

We can glue all the mappings $B_\mathcal{T}$ together, and construct the mapping between two different kinds of Teichm\"uller space of $(S,V)$. 

\begin{lem}\label{A_home}
    The glued mapping
    \begin{equation}%
        \mathbf{B} \coloneqq \bigcup_\mathcal{T}B_\mathcal{T}|_
        {\tilde\Phi_\mathcal{T}(D_h(\mathcal{T}))} \colon \widetilde{Teich}_h(S,V)
        \to Teich(\Sigma) \times (0,1)^V
    \end{equation}%
    is well-defined and homeomorphic.
\end{lem}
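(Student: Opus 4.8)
The plan is to assemble $\mathbf{B}$ from the local pieces $B_\mathcal{T}$ and check, in turn, that it is a well-defined function, continuous, bijective, and has continuous inverse. For well-definedness, the only issue is consistency on overlaps: if $([d_h],\mathbf{r})$ lies in both $\tilde\Psi_\mathcal{T}(D_h(\mathcal{T}))$ and $\tilde\Psi_{\mathcal{T}'}(D_h(\mathcal{T}'))$, then Lemma \ref{intersection} gives $B_\mathcal{T}([d_h],\mathbf{r})=B_{\mathcal{T}'}([d_h],\mathbf{r})$, so the union of partial functions is genuinely a function. Its domain is all of $\widetilde{Teich}_h(S,V)$ by the first equality in \eqref{union_teich_h}, and its image lands in $Teich(\Sigma)\times(0,1)^V$ since each $B_\mathcal{T}$ does.

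For continuity, I would use that $\widetilde{Teich}_h(S,V)=\bigcup_\mathcal{T}\tilde\Psi_\mathcal{T}(D_h(\mathcal{T}))$ is a locally finite closed cover (the cell decomposition \eqref{union_teich_h}, analogous to \eqref{union_teich_gb}), and that $\mathbf{B}$ restricted to each closed piece $\tilde\Psi_\mathcal{T}(D_h(\mathcal{T}))$ agrees with the restriction of the real analytic map $B_\mathcal{T}$ (defined and continuous on the open set $\tilde P_h(\mathcal{T})\supset \tilde\Psi_\mathcal{T}(D_h(\mathcal{T}))$), hence is continuous there; by the pasting lemma for locally finite closed covers, $\mathbf{B}$ is continuous globally. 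For bijectivity, I would construct the inverse the same way: by Theorem \ref{teich home formula} and the second equality in \eqref{union_teich_h}, $Teich(\Sigma)\times(0,1)^V=\bigcup_\mathcal{T}\tilde\Omega_\mathcal{T}(D_h(\mathcal{T}))$, and on each piece the map $\tilde\Psi_\mathcal{T}\circ\tilde\Omega_\mathcal{T}^{-1}=B_\mathcal{T}^{-1}$ is a real analytic homeomorphism onto $\tilde\Psi_\mathcal{T}(D_h(\mathcal{T}))$. The same overlap-consistency argument (now reading Lemma \ref{intersection} in reverse: two truncated weighted Delaunay triangulations of $(\Sigma,d)$ for the weight $\tanh^{-1}(\mathbf{r})$ differ by finitely many diagonal switches, Theorem \ref{gbh_unique}) shows $B_\mathcal{T}^{-1}$ and $B_{\mathcal{T}'}^{-1}$ agree on $\tilde\Omega_\mathcal{T}(D_h(\mathcal{T}))\cap\tilde\Omega_{\mathcal{T}'}(D_h(\mathcal{T}'))$, so they glue to a continuous map $\mathbf{C}\colon Teich(\Sigma)\times(0,1)^V\to\widetilde{Teich}_h(S,V)$. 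Then $\mathbf{B}\circ\mathbf{C}=\mathrm{id}$ and $\mathbf{C}\circ\mathbf{B}=\mathrm{id}$ hold piecewise, hence globally, so $\mathbf{B}$ is a homeomorphism with $\mathbf{B}^{-1}=\mathbf{C}$.

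The main obstacle is verifying that the closed covers in \eqref{union_teich_h} are genuinely \emph{locally finite} closed covers of manifolds (this is what licenses the pasting lemma and the continuity of the glued inverse) and that the pieces overlap only along lower-dimensional faces in the expected way; this is exactly the cell-decomposition statement whose Euclidean analogue \eqref{union_teich_gb} is quoted from \cite{zhu2023existence}, and here it rests on Theorem \ref{del_in_trig_h} (compactness of all orthogonal circles on $D_h(\mathcal{T})$), Theorem \ref{whole_del_h}, and the hyperbolic weighted Delaunay existence/uniqueness (Theorems \ref{local_whole_h}, \ref{gbh_unique}). Once local finiteness is in hand, everything else is a routine pasting argument; I would state the local finiteness as a consequence of the corresponding property for the truncated-triangulation cell decomposition already recorded before Section \ref{section3}, rather than reprove it.
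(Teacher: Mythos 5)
Your proposal is correct and follows essentially the same route as the paper: well-definedness from Lemma \ref{intersection} together with \eqref{union_teich_h}, and bijectivity plus bicontinuity from the matching locally finite cell decompositions of source and target, with the paper merely organizing the second half as injectivity/surjectivity/continuity-of-inverse rather than gluing an explicit $\mathbf{C}$. The one step worth making explicit is your ``Lemma \ref{intersection} in reverse'': to see that $B_\mathcal{T}^{-1}$ and $B_{\mathcal{T}'}^{-1}$ agree at a point of $\tilde\Omega_\mathcal{T}(D_h(\mathcal{T}))\cap\tilde\Omega_{\mathcal{T}'}(D_h(\mathcal{T}'))$ you should note that the Delaunay inequalities \eqref{ineq_h} cut out the \emph{same} set $D_h(\mathcal{T})$ in the shared $(\mathbf{I},\mathbf{r})$-coordinates on both sides, so the pulled-back point lies in $\tilde\Psi_\mathcal{T}(D_h(\mathcal{T}))\cap\tilde\Psi_{\mathcal{T}'}(D_h(\mathcal{T}'))$ and Lemma \ref{intersection} plus injectivity of $B_{\mathcal{T}'}$ applies --- this is exactly what the paper means by ``the ability to switch on both sides of the common weighted Delaunay triangulation are equivalent.''
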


\begin{proof}
    We know $\mathbf{B}$ is well-defined by Lemma \ref{intersection} and Equation \eqref{union_teich_h}, 


    Firstly, we claim that $\mathbf{B}$ is injective. 

    Given any $([d_h],\mathbf{r}) \ne ([d_h'],\mathbf{r}') \in \widetilde{Teich}_h(S,V)$, if their weighted Delaunay triangulation are isotopic, namely there exist $\mathcal{T}$ such that $([d_h],\mathbf{r}),([d_h'],\mathbf{r}') \in \tilde\Psi_\mathcal{T}(D_h(\mathcal{T}))$, by Lemma \ref{lift_inject}, or $\tilde{\Psi}_\mathcal{T}$ and $\tilde{\Omega}_\mathcal{T}$ being homeomorphic, we have $\mathbf{B}([d_h],\mathbf{r}) \ne \mathbf{B}([d_h'],\mathbf{r}')$. 

    If any of their weighted Delaunay triangulation are not isotopic, namely there exist two different triangulation $\mathcal{T}$ and $\mathcal{T}'$ such that $([d_h],\mathbf{r}) \in \tilde\Psi_\mathcal{T}(\interior D_h(\mathcal{T}))$ and $([d_h'],\mathbf{r}') \in \tilde\Psi_\mathcal{T}(\interior D_h(\mathcal{T}'))$, then $\mathbf{B}$ map them to different $n+\left|E\right|$-cells of the cell decomposition.

    We already know that \ref{union_teich_gb} is a cell decomposition. Restricted on $Teich(\Sigma) \times (0,1)^V$, it is also a cell decomposition. Besides, from \eqref{ineq_f} and \eqref{ineq_h}, we know $D_h(\mathcal{T})$ is real analytic homeomorphic to $D_h(\mathcal{T})$ by the mapping 
    \[
        \left(id_{\mathbb{R}_{>0}^{E(\mathcal{T})}},\arcth\right) \colon
        \mathbb{R}_{>0}^{E(\mathcal{T})} \times (0,1)^V \to
        \mathbb{R}_{>0}^{E(\mathcal{T})} \times \mathbb{R}_{>0}^V.
    \]
    Moreover, by definition of $\mathbf{A}$ in \cite{zhu2023existence},
    \begin{equation}%
        \mathbf{B}(D_h(\mathcal{T}))=\mathbf{A}(D_f(\mathcal{T}))\cap
        Teich(\Sigma) \times (0,1)^V.
    \end{equation}%

    Secondly, by inequalities \eqref{ineq_f} and \eqref{ineq_h}, the weighted Delaunay triangulation of $(S,V,d_h)$ with weight $\mathbf{r} \in R(d_h)$, is isotopic to the truncated weighted Delaunay triangulation of $(\Sigma,d)$ with the weight $\tanh\mathbf{r}$, or differed by some edge switch, where $d$ satisfies $([d],\mathbf{r})=\mathbf{B}([d_h],\mathbf{r})$. Denote the common triangulation by $\mathcal{T}$. Therefore, for any $([d],\mathbf{r}) \in Teich(\Sigma) \times(0,1)^V$, we can construct its preimage $\tilde{\Psi}_\mathcal{T}\circ\tilde{\Omega}_\mathcal{T}^{-1} ([d],\arcth(\mathbf{r}))$, so $\mathbf{B}$ is a surjection and furthermore a bijection.

    Finally, when $([d],\mathbf{r})$ locate on a lower dimensional cell than $n+\left|E\right|$, the ability to switch on both sides of the common weighted Delaunay triangulation are equivalent. By Lemma \ref{intersection}, the preimages of $([d],\mathbf{r})$ under $B_\mathcal{T}$ with different possible triangulations are equal. Therefore, $\mathbf{B}^{-1}$ is continuous. To sum up, $\mathbf{B}$ is homeomorphic.
\end{proof}

The proof above also claimed that
\begin{prop}\label{cell}
    Equation \eqref{union_teich_h} forms a local finite cell decomposition.
\end{prop}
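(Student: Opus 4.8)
The goal is to prove Proposition \ref{cell}: that the two decompositions in Equation \eqref{union_teich_h} form local finite cell decompositions. Since the proof of Lemma \ref{A_home} already does most of the work, the strategy is to assemble the pieces that were established there into the formal statement of a cell decomposition, namely: (i) the pieces $\tilde\Psi_\mathcal{T}(D_h(\mathcal{T}))$ (resp.\ $\tilde\Omega_\mathcal{T}(D_h(\mathcal{T}))$) are themselves cells, i.e.\ each is the homeomorphic image of a closed convex-polytope-like region under a real analytic homeomorphism; (ii) their union is the whole space, which is exactly Equation \eqref{union_teich_h}; (iii) two such pieces meet only along lower-dimensional faces that are themselves pieces of the same family; and (iv) the family is locally finite.

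The plan is as follows. First I would record that $D_h(\mathcal{T}) \subset \mathbb{R}_{>1}^{E(\mathcal{T})} \times \mathbb{R}_{>0}^V$ is, by Definition \ref{dht} and \eqref{ineq_h2}, a finite intersection of closed sets cut out by the inequalities \eqref{ineq_h}; composing with the coordinatewise $\arcth$ on the $\mathbf{r}$-factor, via the real analytic homeomorphism $(\mathrm{id},\arcth)$ already invoked in the proof of Lemma \ref{A_home}, it is carried to a region in $\mathbb{R}_{>1}^{E(\mathcal{T})}\times(0,1)^V$, and then $\tilde\Omega_\mathcal{T}$ (a homeomorphism by \eqref{omega_h}) sends it into $Teich(\Sigma)\times(0,1)^V$. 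So $\tilde\Omega_\mathcal{T}(D_h(\mathcal{T}))$ is the homeomorphic image of the closed set $D_h(\mathcal{T})$. I would then appeal directly to the cell decomposition structure of \eqref{union_teich_gb} established in \cite{zhu2023existence}: by the identity proved inside Lemma \ref{A_home},
\[
    \mathbf{B}(\tilde\Psi_\mathcal{T}(D_h(\mathcal{T})))
    = \mathbf{A}(D_f(\mathcal{T})) \cap \bigl(Teich(\Sigma)\times(0,1)^V\bigr),
\]
so the decomposition $Teich(\Sigma)\times(0,1)^V=\bigcup_\mathcal{T}\tilde\Omega_\mathcal{T}(D_h(\mathcal{T}))$ is precisely the restriction of the known cell decomposition \eqref{union_teich_gb} to the open subset $Teich(\Sigma)\times(0,1)^V$. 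The restriction of a cell decomposition to an open subset of the form (cell factor)$\times$(open box) is again a cell decomposition, with the same local finiteness, since each closed cell $\mathbf{A}(D_f(\mathcal{T}))$ meets the open set in a (possibly empty) relatively closed piece and the intersection pattern of faces is inherited.

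Next, to transfer this to $\widetilde{Teich}_h(S,V)$, I would use that $\mathbf{B}\colon \widetilde{Teich}_h(S,V)\to Teich(\Sigma)\times(0,1)^V$ is a homeomorphism (Lemma \ref{A_home}) that carries $\tilde\Psi_\mathcal{T}(D_h(\mathcal{T}))$ onto $\tilde\Omega_\mathcal{T}(D_h(\mathcal{T}))$ by construction of $B_\mathcal{T}$ in \eqref{B_T}. A homeomorphism transports a cell decomposition to a cell decomposition, preserving dimensions, face incidences, and local finiteness; hence $\widetilde{Teich}_h(S,V)=\bigcup_\mathcal{T}\tilde\Psi_\mathcal{T}(D_h(\mathcal{T}))$ is a local finite cell decomposition as well. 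For the face-matching condition I would point to Lemma \ref{intersection}: whenever two pieces $\tilde\Psi_\mathcal{T}(D_h(\mathcal{T}))$ and $\tilde\Psi_{\mathcal{T}'}(D_h(\mathcal{T}'))$ overlap, both $\mathcal{T}$ and $\mathcal{T}'$ are weighted Delaunay for the common metric, so by Theorem \ref{local_whole_h} the overlap lies in the locus where some $h_{ij,k}+h_{ij,l}=0$, i.e.\ on the common boundary faces $\partial D_h(\mathcal{T})\cap\partial D_h(\mathcal{T}')$ cut out by turning an inequality \eqref{ineq_h} into equality — exactly the lower-dimensional cells of the decomposition.

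The main obstacle is really bookkeeping rather than a genuine mathematical difficulty: one must check that $D_h(\mathcal{T})$, as a subset of $\mathbb{R}_{>1}^{E(\mathcal{T})}\times\mathbb{R}_{>0}^V$, is a closed region whose interior, boundary, and faces organize into a genuine cell/CW structure — i.e.\ that the inequalities \eqref{ineq_h} are "in general position" enough that, say, $D_h(\mathcal{T})$ with the collection of all its sub-faces obtained by setting subsets of the defining inequalities to equalities forms a regular CW complex, and that the images of these faces under $\tilde\Omega_\mathcal{T}$ glue correctly across different $\mathcal{T}$. I would handle this by simply inheriting it from \cite{zhu2023existence}: since \eqref{union_teich_gb} was already asserted there to be a local finite cell decomposition and $D_h(\mathcal{T})=(\mathrm{id},\arcth)^{-1}$ of a slice of $D_f(\mathcal{T})$ lying over $(0,1)^V$, every structural property we need is pulled back or restricted from the Euclidean case. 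I would therefore state the proof concisely: it is a direct consequence of the argument in Lemma \ref{A_home}, the cell-decomposition statement \eqref{union_teich_gb} of \cite{zhu2023existence}, and the fact that cell decompositions restrict to open subsets of the base and transport across homeomorphisms.
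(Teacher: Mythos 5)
Your proposal is correct and follows essentially the same route as the paper, which gives no separate argument for Proposition \ref{cell} but extracts it from the proof of Lemma \ref{A_home}: identify $\tilde\Omega_\mathcal{T}(D_h(\mathcal{T}))$ with $\mathbf{A}(D_f(\mathcal{T}))\cap\bigl(Teich(\Sigma)\times(0,1)^V\bigr)$ via the homeomorphism $(\mathrm{id},\arcth)$ between $D_h(\mathcal{T})$ and $D_f(\mathcal{T})$, restrict the known cell decomposition \eqref{union_teich_gb} of \cite{zhu2023existence}, and transport it through the homeomorphism $\mathbf{B}$.
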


\subsection{First derivative of the diffeomorphism}

\begin{lem}\label{dfdF}
    Let the hinge $\Diamond_{ij;kl}$ be assigned with radii $r_k,r_i,r_l,r_j>0$ at vertices $v_k,v_i,v_l,v_j$, and with inversive distances $a,b,c,d,e>1$ at edges $e_{ki}$, $e_{il}$, $e_{lj}$, $e_{jk}$ and $e_{ij}$. On the hyperbolic polyhedral metric computed by the inversive distance circle packing $L_h$, the \emph{hyperbolic cosines} of corresponding hyperbolic edge lengths are denoted as $u,v,w,x,y$. The hyperbolic cosines of radii are denoted as $p,q,r,s$ in corresponding order.

    If the faces $f_{ijk}, f_{ijl}$ have compact orthogonal circles, we develop and embed $\Diamond_{ij;kl}$ into $\mathbb{H}^2$, and let $z = d_{\mathbb{H}^2}(v_k,v_l)$.
    
    Let $F=\frac{z-pr}{\sqrt{p^2-1}\sqrt{r^2-1}}$, and let $f$ satisfy Equation \eqref{f}. Consider $F$ and $f$ as functions of variables $p,q,r,s,a,b,c,d,e$. When the local weighted Delaunay inequality \eqref{ineq_f} holds with equality, we have
    \[
        dF=df.
    \]
\end{lem}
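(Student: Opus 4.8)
The plan is to verify $dF = df$ by computing both differentials explicitly at points where the local weighted Delaunay inequality \eqref{ineq_f} is an equality, and then comparing them coordinate by coordinate. Here $F$ is the analogue of $f$ but built from the \emph{lifted} configuration in $\mathbb{H}^2$ (the quantity $z = d_{\mathbb{H}^2}(v_k,v_l)$ being the hyperbolic distance realised after developing the hinge), whereas $f$ is the purely combinatorial expression \eqref{f} in $a,b,c,d,e$. The key observation is that both $F$ and $f$ are built to encode the inversive distance between the circles $\odot v_k$ and $\odot v_l$ after the flip: $F = (z - pr)/(\sqrt{p^2-1}\sqrt{r^2-1})$ is literally the inversive distance formula \eqref{invhyp} written in the $\cosh$-variables, and $f = \cosh(\text{flipped length})$ translates via $L_h$ into the inversive distance on $e_{kl}$. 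So on the locus where the hinge is genuinely weighted Delaunay (equality in \eqref{ineq_f}), the two orthogonal circles $O_{ijk}$ and $O_{ijl}$ coincide, the four circles share a common compact orthogonal circle, and Lemma \ref{intersection} (or rather its underlying geometry) forces $F = f$ on that locus. The task is to upgrade this equality \emph{on a hypersurface} to an equality of \emph{differentials} there.

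First I would parametrise the locus $\mathcal{L} = \{\eqref{ineq_f}\text{ holds with equality}\}$, a smooth hypersurface in the nine-dimensional space of $(p,q,r,s,a,b,c,d,e)$ by Definition \ref{dft} and the real-analyticity established in Section \ref{section4}. On $\mathcal{L}$ we have $F = f$ as functions, hence $dF$ and $df$ agree when restricted to $T\mathcal{L}$; what remains is to check they agree in the one transverse direction as well. Equivalently, writing $\Phi \coloneqq F - f$, we know $\Phi \equiv 0$ on $\mathcal{L}$ and want $d\Phi = 0$ on $\mathcal{L}$, i.e.\ that $\mathcal{L}$ is not merely a zero set but a \emph{double} zero set of $\Phi$ — that $\Phi$ vanishes to second order. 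The natural way to see this: express $\Phi$ (or a convenient nonvanishing multiple of it) as an explicit function and show it factors as $(\text{defining function of }\mathcal{L})^2 \times (\text{nonvanishing})$, or more cheaply, show $\Phi$ has a sign — $\Phi \ge 0$ everywhere near $\mathcal{L}$, or $\Phi \le 0$ — so that $\mathcal{L}$ is a locus of extrema of $\Phi$ and $d\Phi$ automatically vanishes there. I expect the sign to come out of the geometry: $z$ is the actual realised distance $d_{\mathbb{H}^2}(v_k,v_l)$ in the embedded hinge, and the flipped length $\arcch f$ is the length of the \emph{geodesic diagonal}; when the hinge is convex (Delaunay) these coincide, and otherwise the straight-line distance is shorter or longer in a controlled way, giving $z \le \arcch f$ (hence $F \le f$) with equality exactly on $\mathcal{L}$.

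Concretely the steps are: (1) rewrite $F$ in terms of $a,b,c,d,e,p,q,r,s$ by developing the hinge into $\mathbb{H}^2$ — the hyperbolic cosine law \eqref{cos} gives $u,v,w,x$ and then $z$ as an algebraic function, so $F$ becomes an explicit (if unpleasant) expression; (2) using the generalized Ptolemy relation \eqref{abcdef} and the auxiliary identities \eqref{delta}, show that $F - f$ is (up to a manifestly positive factor built from the $\Delta$'s and the $\sqrt{p^2-1}$ etc.) equal to the square of the left-minus-right side of \eqref{ineq_f}, or proportional to it with a sign; (3) conclude $d(F-f) = 0$ on the equality locus because $F - f$ vanishes there to second order. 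The main obstacle is unambiguously step (2): the algebra identifying $F - f$ with a perfect square (times a positive factor) is heavy, and one must be careful about branch choices of the various square roots $\sqrt{\Delta_{ade}}$, $\sqrt{\Delta_{bce}}$, $\sqrt{\Delta_{abf}}$, $\sqrt{\Delta_{cdf}}$, and of $z$ itself — the relations \eqref{delta} fix the consistent branch, and the compactness of the orthogonal circles (guaranteed here by hypothesis, cf.\ Theorem \ref{xi_thm} and Theorem \ref{del_in_trig_h}) is exactly what keeps all these discriminants positive so the square roots are real. I would expect to offload the bulk of this verification to a symbolic computation, as the paper already signals it does for the Section \ref{section4} computations.
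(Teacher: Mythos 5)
Your reduction of the lemma to the statement that $F-f$ vanishes to second order along the equality locus $\mathcal{L}$ is correct, and it is the right way to frame what must be proved: since $F=f$ on $\mathcal{L}$, the differentials automatically agree on $T\mathcal{L}$, and the entire content is the one transverse direction. The gap is that you never establish the second-order vanishing. Step (2) of your plan --- that $F-f$ equals a nonvanishing factor times the \emph{square} of the defect in \eqref{ineq_f} --- is exactly the lemma restated, and you defer it entirely to an unspecified symbolic computation. The sign heuristic you offer in its place does not survive scrutiny: you write that $z$ is the ``actual realised distance'' while ``the flipped length $\arcch f$ is the length of the geodesic diagonal,'' but $z=d_{\mathbb{H}^2}(v_k,v_l)$ \emph{is} the length of the geodesic diagonal of the developed hinge, whereas $\arcch\bigl(pr+f\sqrt{p^2-1}\sqrt{r^2-1}\bigr)$ is not a distance realised anywhere in that figure off the flip locus --- $f$ is defined by the algebraic relation \eqref{f}/\eqref{abcdef}, which corresponds to the four vertex circles admitting a common orthogonal circle, a condition that fails off $\mathcal{L}$. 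So the inequality $F\le f$ (or $\ge$) near $\mathcal{L}$, which would indeed force $d(F-f)=0$ there, is asserted without argument; a priori $F-f$ could change sign across $\mathcal{L}$, in which case the lemma would be false, and nothing in your proposal rules this out.

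For comparison, the paper proves the lemma by a different and fully explicit route: developing the hinge yields a polynomial relation $X(p,q,r,s,a,b,c,d,e,F)=0$, quadratic in each variable; the flip locus is identified with $p=p_1$, the critical point of $X$ in $p$, at which $X$ acquires the Ptolemy polynomial $Y(a,b,c,d,e,F)$ as a factor, forcing $F=f$; and then each identity $F_p=f_p=0$, $F_q=f_q=0$, $F_a=f_a$, $F_e=f_e$ (the rest by symmetry of $X$ and $Y$) is verified by implicit differentiation, checking symbolically that the relevant numerators all contain the factor $Y(a,b,c,d,e,f)=0$. To salvage your route you would have to actually produce the factorization of $F-f$ as a square times a nonvanishing function, or equivalently prove the sign of $F-f$ near $\mathcal{L}$; either is a computation of the same order as the one the paper performs, and at present your proposal contains neither.
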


\begin{proof}
    In the developed hinge $\Diamond_{ij;kl}$, denoted the inner angle of the face $f_{ijk}$ at the vertex $v_i$ by $\alpha$, and the one of the face $f_{ijl}$ by $\beta$. By cosine law \eqref{cos},
    \[
        \cos \alpha = \frac{uy-x}{\sqrt{u^2-1}\sqrt{y^2-1}}\quad
        \cos \beta = \frac{vy-w}{\sqrt{v^2-1}\sqrt{y^2-1}} \quad
        \cos(\alpha+\beta)=\frac{uv-z}{\sqrt{u^2-1}\sqrt{v^2-1}}.
    \]
    Substitute them in the identity $(\cos(\alpha+\beta)-\cos\alpha\cos\beta)^2=(1-\cos^2\alpha)(1-\cos^2\beta)$ and simplify, we have
    \[        
        \begin{aligned}
            0=&u^2 w^2+v^2 x^2+y^2 z^2-u^2-v^2-w^2-x^2-y^2-z^2+1\\
            -&2(u v w x+u w y z+v x y z-v w y-u x y-u v z-w x z).
        \end{aligned}
    \]

    Substituting the formulas for the hyperbolic cosine of edge length, i.e., $u=pq+a\sqrt{p^2-1}\sqrt{q^2-1}$, and the special one $z=pr+F\sqrt{p^2-1}\sqrt{r^2-1}$, into the above equation, we have
    \[
        X(p,q,r,s,a,b,c,d,e,F)=0,
    \]
    where $X$ is a ten-variable polynomial of degree twelve with each variable having a maximum degree of two. The specific form is too lengthy to be included here. Denote the generalized Ptolemy equation \eqref{abcdef} by $Y(a,b,c,d,e,f)=0$.   

    From here we use a different method from \cite{zhu2023existence}. Note that $X$ is a quadratic function of $p$, so there exists a unique critical point $p=p_1$ such that $X_p(p_1,q,r,s,a,b,c,e,d,F)=0$. 
    
    Substituting $p=p_1(q,r,s,a,b,c,d,e,F)$ into $X(p,q,r,s,a,b,c,d,e,F)$, then there is a factor $Y(a,b,c,d,e,F)$ in the numerator of $X(q,r,s,a,b,c,d,e,F)$ that we get, and other factors in the numerator are obviously non-zero. 
    
    Thus, if $X(p_1,q,r,s,a,b,c,e,d,F)=0$, then $Y(a,b,c,d,e,F)=0$. That means when $p=p_1$, we have $F=f$ by the geometry meaning of hinge $\Diamond_{ij;kl}$, then the four vertex circles are orthogonal to a common circle, and $\Diamond_{ij;kl}$ is able to switch.

    By the formula for implicit derivative $F_p=-X_p/X_F$, after substituting $F=f$ and $p=p_1$, we have $F_p=0=f_p$.

    Substituting $F=f$ and $p=p_1$ into $X_q$, $Y_a/Y_f-X_a/X_F$ and $Y_e/Y_f-X_e/X_F$, respectively, and combining them by finding a common denominator and factoring, we see that all the numerators contain the factor $Y(a,b,c,d,e,f)$. Since $f$ satisfies $Y(a,b,c,d,e,f)=0$, the three results are equal to $0$. Then we have $F_q=-X_q/X_F=0$, $Y_a/Y_f-X_a/X_F=0$ and $Y_e/Y_f-X_e/X_F=0$, that is to say $F_q=f_q=0$,$F_a=f_a$ and $F_e=f_e$.

    Using the symmetry of the variables in the expressions for $X$ and $Y$, we can derive other equalities between partial derivatives: $F_r=f_r=F_s=f_s=0$, and $F_b=f_b,F_c=f_c,F_d=f_d$. Thus, this lemma $dF=df$ is proved. See codes in appendix.
\end{proof}

\begin{thm}\label{c1diff}
    $\mathbf{B}$ is a $C^1$ diffeomorphism.
\end{thm}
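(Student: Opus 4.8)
The plan is to show that $\mathbf{B}$ is bijective (already done in Lemma \ref{A_home}), that it is $C^1$ on each closed cell $\tilde P_h(\mathcal{T})$, and that these $C^1$ structures agree on the overlaps between adjacent cells, so the glued map and its inverse are both $C^1$. Within a single cell $\tilde P_h(\mathcal{T})$, the map $B_\mathcal{T} = \tilde\Omega_\mathcal{T} \circ \tilde\Psi_\mathcal{T}^{-1}$ is a composition of real analytic homeomorphisms, hence already real analytic there; so the only real work is at the cell walls, where two triangulations $\mathcal{T}$ and $\mathcal{T}'$ meet along $\tilde\Psi_\mathcal{T}(D_h(\mathcal{T})) \cap \tilde\Psi_{\mathcal{T}'}(D_h(\mathcal{T}'))$. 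The key point is that in the coordinates $(\mathbf{I},\mathbf{r})$ on $\mathbb{R}_{>1}^{E} \times \mathbb{R}_{>0}^V$, the chart $\tilde\Omega_\mathcal{T}$ and the chart $\tilde\Omega_{\mathcal{T}'}$ differ, under an edge flip of $\Diamond_{ij;kl}$, only in the coordinate assigned to the flipped edge: $\mathcal{T}$ records the inversive distance at $e_{ij}$ (via the hyperbolic length of the common perpendicular, pulled back through $\arcch$), while $\mathcal{T}'$ records $f$ at $e_{kl}$, where $f$ is the function of $(a,b,c,d,e)$ in Equation \eqref{f}. On the wall, $F = f$ by Lemma \ref{dfdF} and its geometric interpretation, and on the $\tilde\Psi$-side the analogous statement holds for $L_h$.

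The crucial step is therefore Lemma \ref{dfdF}: on the locus where the local weighted Delaunay inequality \eqref{ineq_f} is an equality — i.e.\ exactly the cell wall — one has not merely $F = f$ but $dF = df$ as differentials in the variables $(p,q,r,s,a,b,c,d,e)$. I would spell out how this gives $C^1$ gluing. Choose global coordinates on a neighborhood of a wall point adapted to both triangulations: on the $\mathcal{T}$-side use $(\mathbf{I},\mathbf{r})$ with the flipped-edge coordinate being $I_{ij}$ (equivalently $y = \cosh l_{ij}$), and on the $\mathcal{T}'$-side use the same tuple but with $I_{ij}$ replaced by $f$. Transition between the two charts of $Teich_h(S,V) \times \mathbb{R}_{>0}^V$ (resp.\ $Teich(\Sigma) \times (0,1)^V$) is the identity on all coordinates except the flipped one, on which it is governed by Theorem \ref{teich home formula} (resp.\ its $L_h$-analog). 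Then $\mathbf{B}$ is $C^1$ across the wall precisely when the one remaining coordinate transitions $C^1$-ly, and Lemma \ref{dfdF} says the first-order Taylor data of $F$ (the $\tilde\Omega$ side) and of $f$ (the combinatorial transition) agree on the wall; the symmetric statement, proved the same way with $L_h$ in place of $\Omega_\mathcal{T}$ — or more simply by noting that $\tilde\Psi_\mathcal{T}^{-1}$ and $\tilde\Omega_\mathcal{T}^{-1}$ both read off the inversive distances — gives the $\tilde\Psi$ side. Because all the pieces $B_\mathcal{T}$ are individually real analytic and the first derivatives match along every common wall, the glued function $\mathbf{B}$ has continuous first derivatives everywhere; applying the same argument to $\mathbf{B}^{-1}$ (the cell decomposition is local finite by Proposition \ref{cell}, so near any point only finitely many cells meet and the matching is a finite condition) shows $\mathbf{B}^{-1}$ is $C^1$ too.

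Concretely, I would organize the argument as: (1) recall $\mathbf{B}$ is a homeomorphism with $\mathbf{B}|_{\tilde P_h(\mathcal{T})} = B_\mathcal{T}$ real analytic on each cell; (2) fix a wall point $([d_h],\mathbf{r})$ lying in $\tilde\Psi_\mathcal{T}(D_h(\mathcal{T})) \cap \tilde\Psi_{\mathcal{T}'}(D_h(\mathcal{T}'))$ with $\mathcal{T}, \mathcal{T}'$ differing by one flip; (3) parametrize a full neighborhood by $(\mathbf{I},\mathbf{r}) \in \mathbb{R}_{>1}^{E(\mathcal{T})} \times \mathbb{R}_{>0}^V$ via $\tilde\Psi_\mathcal{T}$, which is a homeomorphism onto an open set, and note the same neighborhood is covered by $\tilde\Psi_{\mathcal{T}'}$-coordinates, the change being the flip transition; (4) observe $B_\mathcal{T}$ in these coordinates equals $\tilde\Omega_\mathcal{T} \circ \tilde\Psi_\mathcal{T}^{-1}$ and $B_{\mathcal{T}'}$ equals $(\tilde\Omega_{\mathcal{T}'} \circ \tilde\Psi_{\mathcal{T}'}^{-1})$, and these extend $B_\mathcal{T}$ real-analytically to open sets; (5) show the two real-analytic extensions have equal value and equal first derivative on the wall — value by Lemma \ref{intersection}, derivative by Lemma \ref{dfdF} applied on both the domain side (with $L_h$) and the target side (with $\Omega_\mathcal{T}$), where $F$ plays the role of the bridging variable $z = d_{\mathbb{H}^2}(v_k,v_l)$; (6) conclude by a standard lemma: a continuous map that is $C^1$ on each of finitely many closed sets covering a neighborhood, with matching first derivatives on the overlaps, is $C^1$ on the neighborhood; (7) apply the same reasoning to $\mathbf{B}^{-1}$, using that each $B_\mathcal{T}^{-1}$ is real analytic and nondegenerate, to conclude $\mathbf{B}$ is a $C^1$ diffeomorphism. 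The main obstacle is step (5): one must be careful that ``$dF = df$'' in the nine-variable ambient sense really does translate into agreement of the first-order jets of the two chart transitions when restricted to the wall (a codimension-one submanifold), and that the tangential derivatives, not just the normal one, match — this is exactly what Lemma \ref{dfdF} delivers, since $dF = df$ is an equality of full differentials, so the restriction to any tangent direction of the wall agrees automatically, and combined with $F = f$ on the wall itself this pins down the $C^1$-jet.
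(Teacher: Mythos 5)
Your proposal is correct and follows essentially the same route as the paper: real analyticity of each $B_\mathcal{T}$ on its cell, reduction to matching first derivatives of the two chart transitions $\tilde\Psi_{\mathcal{T}'}^{-1}\circ\tilde\Psi_\mathcal{T}$ and $\tilde\Omega_{\mathcal{T}'}^{-1}\circ\tilde\Omega_\mathcal{T}$ across a single flip (identity on all but the flipped coordinate, where Lemma \ref{dfdF} gives $dF=df$), iteration over finitely many flips, and local finiteness of the cell decomposition. Your extra care in steps (5)--(7) about the gluing lemma and about $\mathbf{B}^{-1}$ only makes explicit what the paper leaves implicit.
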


\begin{proof}
    For any $([d_h],\mathbf{r}) \in \widetilde{Teich}_f(S,V)$, if there exists a triangulation $\mathcal{T}$ such that $([d_h],\mathbf{r}) \in \interior \tilde\Omega_\mathcal{T}(D_f(\mathcal{T}))$, we know that $\mathbf{B}$ is smooth near $\tilde\Omega_\mathcal{T}([d_h],\mathbf{r})$ because $A_\mathcal{T}$ is real analytic.

    If not, there still exists a triangulation $\mathcal{T}$ such that $([d_h],\mathbf{r}) \in \tilde\Omega_\mathcal{T}(D_f(\mathcal{T}))$. In this case, since $\tilde\Omega_\mathcal{T}$ is homeomorphic, we have $([d_h],\mathbf{r}) \in \partial\tilde\Omega_\mathcal{T}(D_f(\mathcal{T})) = \tilde\Omega_\mathcal{T}(\partial D_f(\mathcal{T}))$. 
    By the real analytic cell decomposition structure in Proposition \ref{cell}, there exist a finite number of triangulations denoted by $\mathcal{T}_1=\mathcal{T},\dots,\mathcal{T}_k,k \ge 2$, such that they differ from $\mathcal{T}$ by a finite number of edge switches, which means $([d_h],\mathbf{r}) \in \tilde\Omega_\mathcal{T}(\bigcap_{i=1}^k D_f(\mathcal{T}_i))$.
    Moreover, there exist a neighborhood $U$ such that $([d_h],\mathbf{r}) \in U \subset \tilde\Omega_\mathcal{T}(\bigcup_{i=1}^k D_f(\mathcal{T}_i))$. To prove that $\mathbf{A}$ is $C^1$ near $\tilde\Omega_\mathcal{T}([d_h],\mathbf{r})$, it is sufficient to show that $dB_{\mathcal{T}}=dB_{\mathcal{T}'}$ at $\tilde\Omega_\mathcal{T}([d_h],\mathbf{r})$ for any $\mathcal{T}' \in \left\{\mathcal{T}_2,\dots,\mathcal{T}_k\right\}$.
    
    Consider $\tilde\Omega_{\mathcal{T}'} \circ \tilde\Omega_\mathcal{T}^{-1}$ and $\tilde\Psi_{\mathcal{T}'} \circ \tilde\Psi_\mathcal{T}^{-1}$.
    Firstly, we show that $d(\tilde\Omega_{\mathcal{T}'} \circ \tilde\Omega_\mathcal{T}^{-1}) = d(\tilde\Psi_{\mathcal{T}'} \circ \tilde\Psi_\mathcal{T}^{-1})$ holds at $([d_h],\mathbf{r})$, if $\mathcal{T}$ and $\mathcal{T}'$ are differed by one edge switch.
    By observing the coordinates of these two maps, we see that they are identity on every coordinate except the switched one. The two different inversive distance values (i.e., $f$ and $F$ in Lemma \ref{dfdF}) have the same first-order partial derivatives with respect to other coordinates by Lemma \ref{dfdF}.
    Thus, the two maps $\tilde\Omega_{\mathcal{T}'} \circ \tilde\Omega_\mathcal{T}^{-1}$ and $\tilde\Psi_{\mathcal{T}'} \circ \tilde\Psi_\mathcal{T}^{-1}$ have the same Jacobi matrices.
    Then, by $\tilde\Omega_{\mathcal{T}'}$ and $\tilde\Psi_\mathcal{T}$ are real analytic homeomorphic, we know that $dB_{\mathcal{T}}=dB_{\mathcal{T}'}$ holds at $\tilde\Omega_\mathcal{T}([d_h],\mathbf{r})$.
    For $\mathcal{T}$ and $\mathcal{T}'$ differed by finite switches, just repeat the steps above for finite times.
    
    Therefore, we have proved that $\mathbf{B}$ is a global $C^1$ diffeomorphism. It can be verified that $\mathbf{B}$ is not a $C^2$ diffeomorphism, but this is irrelevant and omitted.
\end{proof}

\section{Proof of the main theorem}\label{section5}

\subsection{Discrete conformal equivalence}
The discrete conformal equivalent class of inversive distance circle packing on polyhedral surface is defined as follows.

\begin{defn}
    Given a marked surface $(S,V)$, suppose $d_h$ and $d_h'$ are two hyperbolic polyhedral metrics with a legal weight function respectively $\mathbf{r} \in R(d_h)$ and $\mathbf{r}' \in R(d_h')$ respectively, we say $([d_h],\mathbf{r})$ and $([d_h'],\mathbf{r}')$ are \emph{discrete conformal equivalent} for inversive distance circle packing, if the first component of $\mathbf{B}([d_h],\mathbf{r})$ and $\mathbf{B}([d_h'],\mathbf{r}')$ are equal, which means two derived hyperbolic metrics with geodesic boundaries are isotopic.
\end{defn}

Since $\mathbf{B}$ is homeomorphic, this definition of equivalence has reflexivity, symmetry, and transitivity, thus it is well-defined. Then we have the following proposition.

\begin{prop}
    Given a marked surface $(S,V)$ and its related surface $\Sigma$, for any discrete conformal equivalent class, it can be represented by
    \[
        \mathbf{B}^{-1}\left(\left\{[d]\right\} \times (0,1)^V\right)
        \subset \widetilde{Teich}_h(S,V),
    \]
    where $[d]$ is an isometry class isotopic to identity on the hyperbolic surface with geodesic boundaries. 
\end{prop}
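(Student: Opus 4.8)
The plan is to unpack the definition of discrete conformal equivalence and use the bijectivity of $\mathbf{B}$ established in Lemma \ref{A_home}. Recall that $([d_h],\mathbf{r})$ and $([d_h'],\mathbf{r}')$ are discrete conformal equivalent precisely when the first components of $\mathbf{B}([d_h],\mathbf{r})$ and $\mathbf{B}([d_h'],\mathbf{r}')$ agree in $Teich(\Sigma)$; the second components live in $(0,1)^V$ and are unconstrained. Therefore, for a fixed isometry class $[d] \in Teich(\Sigma)$, the equivalence class of all $([d_h],\mathbf{r})$ whose image under $\mathbf{B}$ has first component $[d]$ is exactly the set of points mapping into the slice $\{[d]\} \times (0,1)^V$. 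Since $\mathbf{B}$ is a bijection from $\widetilde{Teich}_h(S,V)$ onto $Teich(\Sigma) \times (0,1)^V$ (Lemma \ref{A_home}), this set is precisely $\mathbf{B}^{-1}(\{[d]\} \times (0,1)^V)$.

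The steps, in order: first, I would note that the relation ``first component of $\mathbf{B}$ agrees'' partitions $\widetilde{Teich}_h(S,V)$ into fibers of the composition $p_1 \circ \mathbf{B}$, where $p_1 \colon Teich(\Sigma) \times (0,1)^V \to Teich(\Sigma)$ is the projection onto the first factor; this uses only that $\mathbf{B}$ is a well-defined map with the stated target (Lemma \ref{A_home}), so the partition is genuine and every class is nonempty (indeed, given any $[d]$, pick any $\mathbf{r}_0 \in (0,1)^V$ and take $\mathbf{B}^{-1}([d],\mathbf{r}_0)$, which exists by surjectivity). Second, I would identify the fiber over $[d]$: by definition, $([d_h],\mathbf{r})$ lies in the class of $[d]$ iff $p_1(\mathbf{B}([d_h],\mathbf{r})) = [d]$ iff $\mathbf{B}([d_h],\mathbf{r}) \in \{[d]\} \times (0,1)^V$ iff $([d_h],\mathbf{r}) \in \mathbf{B}^{-1}(\{[d]\} \times (0,1)^V)$, the last step using injectivity of $\mathbf{B}$ so that the preimage is exactly the fiber and not something larger. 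Third, I would remark that as $[d]$ ranges over $Teich(\Sigma)$, these preimages sweep out all of $\widetilde{Teich}_h(S,V)$ and are pairwise disjoint, confirming that every discrete conformal class is of the asserted form.

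There is essentially no analytic obstacle here; this proposition is a formal consequence of Lemma \ref{A_home} and the definition immediately preceding it. The only point requiring a little care is the \emph{well-definedness} of the equivalence relation itself, namely that ``the first components of $\mathbf{B}$ agree'' is reflexive, symmetric, and transitive --- but this is already granted in the paragraph after the definition (it follows because equality in $Teich(\Sigma)$ is an equivalence relation and $\mathbf{B}$ is a single-valued map), so I would simply cite it. A second minor point is to confirm that every such slice $\{[d]\} \times (0,1)^V$ is nonempty and that its preimage is nonempty, which is immediate from $(0,1)^V \neq \varnothing$ and the surjectivity half of Lemma \ref{A_home}. Hence the proof is short: it is a direct translation of ``fiber of $p_1 \circ \mathbf{B}$'' into ``$\mathbf{B}^{-1}$ of a slice,'' legitimized by the bijectivity of $\mathbf{B}$.
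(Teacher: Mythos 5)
Your argument is correct and is exactly the reasoning the paper relies on: the proposition is an immediate consequence of the definition of discrete conformal equivalence together with the bijectivity of $\mathbf{B}$ from Lemma \ref{A_home}, so the equivalence classes are precisely the fibers of the first-component projection composed with $\mathbf{B}$. The paper treats this as self-evident and gives no separate proof, so your write-up matches (and slightly elaborates on) the intended argument.
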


Here is the main theorem of this paper.

\begin{thm}\label{main_h}
    Given a marked surface $(S,V)$ with $n$ vertices, for any $([d_f],\mathbf{r}) \in \widetilde{Teich}_f(S,V)$ and target discrete curvature
    \[
        \bar{\mathbf K} \colon V \to (-\infty,2\pi) \quad v_i \mapsto \bar K_i
    \]
    satisfies the Gauss-Bonnet inequality formula
    \[
        \sum_{i=1}^n\bar K_i>2\pi \chi(S),
    \] 
    then there exist a unique inversive distance circle packing $([d_f'],\mathbf{r}') \in \widetilde{Teich}_f(S,V)$ discrete conformal equivalent to $([d_f],\mathbf{r})$, such that the discrete curvature of the piecewise flat metric $d_f'$ at the $v_i \in V$ is equal to $\bar K_i$. 
\end{thm}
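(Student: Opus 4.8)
The plan is to fix the discrete conformal class, reduce the prescribed-curvature problem to a single fiber, and solve it there by a convex variational principle combined with the discrete Ricci flow. First I would invoke Theorem~\ref{c1diff}, so that $\mathbf B$ is a $C^1$ diffeomorphism onto $Teich(\Sigma)\times(0,1)^V$. Writing $([d],\tanh\mathbf r)=\mathbf B([d_f],\mathbf r)$, the preceding proposition identifies the discrete conformal class of $([d_f],\mathbf r)$ with the fiber $\mathcal F\coloneqq\mathbf B^{-1}(\{[d]\}\times(0,1)^V)$, a $C^1$ submanifold diffeomorphic to $(0,1)^V$ through the second coordinate $\hat{\mathbf r}=\tanh\mathbf r$. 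It therefore suffices to show that the discrete curvature map
\[
    \mathbf K\colon(0,1)^V\to\mathbb R^V,\qquad
    \hat{\mathbf r}\mapsto(K_1,\dots,K_n),\quad K_i=2\pi-\varphi_i,
\]
read off from the metric $\mathbf B^{-1}([d],\hat{\mathbf r})$, is a diffeomorphism onto the admissible set $\mathcal A\coloneqq\{\bar{\mathbf K}\in(-\infty,2\pi)^V:\sum_i\bar K_i>2\pi\chi(S)\}$, which is an open, convex, full-dimensional region of $\mathbb R^n$.

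On each cell $\tilde\Psi_{\mathcal T}(D_h(\mathcal T))$ of the decomposition from Proposition~\ref{cell}, I would coordinatize the fiber by a logarithmic weight $\mathbf u=(u_1,\dots,u_n)$ on $(0,1)^V$ and introduce the \emph{Ricci potential}
\[
    \mathcal E(\mathbf u)=\int^{\mathbf u}\sum_{i=1}^n(\bar K_i-K_i)\,du_i,
\]
so that $\nabla\mathcal E=\bar{\mathbf K}-\mathbf K$. The $1$-form is closed cell-wise because the Jacobian $\partial K_i/\partial u_j$ is symmetric, which is the discrete analogue of the single-triangle angle-derivative symmetry; crucially it glues across cell walls because $\mathbf B$ is $C^1$ (Theorem~\ref{c1diff}), so the relevant angle derivatives agree under an edge flip (Lemma~\ref{dfdF}). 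Hence $\mathcal E$ is a well-defined $C^2$ function on all of $\mathcal F$. The heart of the argument is convexity: $\hess\mathcal E=-\partial\mathbf K/\partial\mathbf u$ should be positive definite. On a single face this is the sign-definiteness of the angle-derivative matrix of an inversive-distance triangle whose orthogonal circle is compact, and the compactness is exactly what Theorem~\ref{del_in_trig_h} guarantees on a weighted Delaunay triangulation; summing over faces and using the $C^1$ gluing yields strict convexity of $\mathcal E$ on $\mathcal F$. Convexity makes $\mathbf K$ injective and a local diffeomorphism, which already gives the uniqueness assertion.

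For existence I would show $\mathbf K$ is onto $\mathcal A$, equivalently that $\mathcal E$ is proper and attains its minimum whenever $\bar{\mathbf K}\in\mathcal A$. I would analyze the boundary behavior of the fiber as weights degenerate, using the flip surgery to remain inside the union of cells, and verify $\mathcal E\to+\infty$ precisely under the hypothesis $\sum_i\bar K_i>2\pi\chi(S)$ together with $\bar K_i<2\pi$. Equivalently, running the discrete Ricci flow $\dot u_i=\bar K_i-K_i$ makes $\mathcal E$ a Lyapunov function; the flow exists for all time, and whenever some orthogonal circle is about to become non-compact one performs an edge flip, which by Theorem~\ref{del_in_trig_h} and Theorem~\ref{whole_del_h} keeps the triangulation weighted Delaunay and the trajectory inside $\mathcal F$. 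The flow converges to the unique minimizer, at which $\mathbf K=\bar{\mathbf K}$. Pulling back through $\mathbf B^{-1}$ produces $([d_f'],\mathbf r')\in\widetilde{Teich}_f(S,V)$ in the same fiber, hence discrete conformal equivalent to $([d_f],\mathbf r)$, with discrete curvature $\bar{\mathbf K}$.

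\textbf{Main obstacle.} The delicate point is global convexity across the cell walls of the decomposition in Proposition~\ref{cell}: positive-definiteness of $\hess\mathcal E$ on the \interior\ of each cell is a computation, but assembling the cell-wise potentials into one $C^1$ convex function requires the flip-invariance of first derivatives (Lemma~\ref{dfdF}, Theorem~\ref{c1diff}) in tandem with uniform control of orthogonal-circle compactness from Theorem~\ref{del_in_trig_h}. Equally subtle are the properness estimate needed for surjectivity, and the verification that the Ricci flow with edge-flip surgery never escapes the admissible region while the target lies in $\mathcal A$; these boundary and long-time analyses, rather than any single algebraic identity, are where the real work lies.
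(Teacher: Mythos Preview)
Your overall architecture matches the paper's: fix the fiber $\mathbf B^{-1}(\{[d]\}\times(0,1)^V)$, parametrize by $\mathbf u=\log\tanh(\tfrac12\mathbf r)\in\mathbb R_{<0}^V$, build a globally $C^2$ strictly convex Ricci potential by patching across the cells of Proposition~\ref{cell} using Theorem~\ref{c1diff}, and deduce injectivity of $\kappa_d$. The paper, however, does \emph{not} prove surjectivity via properness of $\mathcal E$ or via Ricci flow; the flow is deferred to a separate corollary (Theorem~\ref{ricci_flow}) that \emph{uses} the main theorem, so invoking it here would be circular. Instead, surjectivity is obtained by invariance of domain plus a direct boundary analysis: for any sequence $\mathbf u^{(m)}\to\partial([-\infty,0]^n)$, a subsequence lies in a single cell $U_j$ (pigeonhole on the finite decomposition), and then one classifies vertices as \emph{good} ($u_i^\infty=-\infty$) or \emph{bad}. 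The decisive geometric input is that no face of a weighted Delaunay triangulation can have exactly one edge with both endpoints bad, because as the good radius shrinks the triangle degenerates and its orthogonal circle becomes non-compact, contradicting $D_h(\mathcal T)\subset\Xi(\mathcal T)$ (Theorem~\ref{del_in_trig_h}). This forces either all radii $\to0$ (area $\to0$, Gauss--Bonnet puts $\mathbf K$ on $\sum K_i=2\pi\chi(S)$), some $u_i^\infty=0$ ($K_i\to2\pi$), or an isolated bad vertex surrounded by good ones ($K_i\to2\pi$).

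Your sketch names the right obstacles but leaves this mechanism unspecified: the phrase ``analyze the boundary behavior \dots and verify $\mathcal E\to+\infty$'' is exactly where the good/bad combinatorics and the orthogonal-circle compactness constraint must enter, and that is the substantive content of the proof. Also, no ``surgery during the flow'' is needed once $\mathcal E$ is globally $C^2$ convex on $\mathbb R_{<0}^V$; the flips are already absorbed into the cell decomposition of the fiber.
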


\subsection{Variational principle}

The following Lemma and Theorem can be found in \cite{guo2011local} and \cite{zhang2014unified}.

\begin{lem}\label{diff_trig}
    Given a marked surface $(S,V)$ with a triangulation $\mathcal{T}=(V,E,F)$, for any $(\mathbf{I},\mathbf{r}) \in Q_h(\mathcal{T})$, construct the inversive distance circle packing $L_h(\mathbf{I},\mathbf{r})=\mathbf{l}$. Denote the inner angle of $f_{ijk}$ at vertices $v_i,v_j,v_k$ by $\theta_i,\theta_j,\theta_k$, and the opposite edge lengths by $l_i,l_j,l_k$. For any vertex $v_i$, let 
    \[
        u_i= \log \tanh \frac{r_i}{2} \in (-\infty,0) \quad \mbox{or} \quad 
        \tanh r_i=\frac{1}{\cosh u_i} \in (0,1)
    \]
    then the Jacobi matrix
    \[
        \frac{\partial(\theta_i,\theta_j,\theta_k)}{\partial(u_i,u_j,u_k)}
    \]
    is symmetric and negative definite.
\end{lem}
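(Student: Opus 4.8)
\emph{Proof proposal.} The statement concerns a single face, so I would fix $f_{ijk}$, write $I_i=I_{jk},I_j=I_{ki},I_k=I_{ij}$ for its inversive distances and $r_i,r_j,r_k$ for the radii, and work in the parameter region of this one triangle. The first step is to record the dependence of the edge lengths on the $u_m$. From $\tanh r_m=\sech u_m$ and $u_m<0$ one gets $\cosh r_m=-\coth u_m$ and $\sinh r_m=-1/\sinh u_m$, so by the definition of $L_h$,
\[
    \cosh l_k=\frac{\cosh u_i\cosh u_j+I_k}{\sinh u_i\sinh u_j}
\]
and cyclically, while the angles $\theta_i,\theta_j,\theta_k$ are the usual functions of $(l_i,l_j,l_k)$ given by the hyperbolic cosine law \eqref{cos}. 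Hence the matrix in question factors as
\[
    \frac{\partial(\theta_i,\theta_j,\theta_k)}{\partial(u_i,u_j,u_k)}
    =\frac{\partial(\theta_i,\theta_j,\theta_k)}{\partial(l_i,l_j,l_k)}\,\cdot\,
    \frac{\partial(l_i,l_j,l_k)}{\partial(u_i,u_j,u_k)}=:ND .
\]
It is worth noting at once that $l_i$ is the length of the edge opposite $v_i$, which does not meet the circle at $v_i$, so $\partial l_i/\partial u_i=0$; differentiating the displayed formula shows moreover that every off-diagonal entry of $D$ is strictly positive. Neither $N$ nor $D$ is symmetric on its own.

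For the symmetry of $ND$ I would differentiate directly: although $N$ and $D$ are not symmetric, their product is, and this is exactly the feature that singles out the coordinates $u_m=\log\tanh(r_m/2)$. Clearing denominators, the identity $\partial\theta_i/\partial u_j=\partial\theta_j/\partial u_i$ becomes a rational identity in $\cosh u_m,\sinh u_m$ and the $I_m$, which one verifies symbolically, as is done elsewhere in the paper. Equivalently, the $1$-form $\theta_i\,du_i+\theta_j\,du_j+\theta_k\,du_k$ is closed on the (simply connected) parameter region of the triangle, which produces a local Ricci potential $W$ with $ND=\hess W$. This is Guo's computation in \cite{guo2011local}.

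The real content is negative definiteness, and here two things make it delicate. First, the sign pattern alone is not decisive: the diagonal of $ND$ is negative, but the off-diagonal entries have no fixed sign. Second, one must allow configurations with $\Xi_{ijk}\le 0$, i.e.\ with a non-compact orthogonal circle. I would follow the radical-center decomposition of \cite{guo2011local}\cite{zhang2014unified}, which runs parallel to the case analysis already in the proof of Theorem \ref{Xi_trig}: split each $\theta_m$ into the two sub-angles cut off at $v_m$ by the geodesic from $v_m$ to the center of the orthogonal circle when $\Xi_{ijk}>0$, and to the corresponding point on the hypercycle (resp.\ horocycle) through the feet of the perpendiculars when $\Xi_{ijk}<0$ (resp.\ $=0$). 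This exhibits $ND$ as a sum of three pieces, one for each edge, each of which is shown to be negative semidefinite by an explicit hyperbolic right-triangle computation, and whose sum has trivial kernel (the three semidefinite pieces share no kernel direction), hence is negative definite; the formulas for the sub-angles continue analytically across $\Xi_{ijk}=0$, so the three cases are handled uniformly. I expect this to be the main obstacle --- making the decomposition run when the orthogonal circle is non-compact, together with the bookkeeping of signs of the sub-angles --- whereas the symmetry is only a (lengthy) cosine-law computation.
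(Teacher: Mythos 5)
For context: the paper offers no proof of this lemma at all --- it is imported verbatim from \cite{guo2011local} and \cite{zhang2014unified} --- so what you are really reconstructing is Guo's argument. Your setup is correct and matches his: the identities $\cosh r_m=-\coth u_m$, $\sinh r_m=-1/\sinh u_m$, the resulting formula for $\cosh l_k$, the factorization into $ND$ with $\partial l_i/\partial u_i=0$, and the verification of symmetry by a direct cosine-law computation (equivalently, closedness of $\sum_m\theta_m\,du_m$ in these coordinates) is exactly the content of \cite{guo2011local}.

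The gap is in the negative-definiteness step. The decomposition you propose --- split each $\theta_m$ into two sub-angles at the radical center and exhibit the Jacobian as a sum of three edge-indexed pieces, ``each of which is shown to be negative semidefinite'' --- is the Thurston/Chow--Luo mechanism, and the semidefiniteness of the individual pieces is \emph{equivalent} to the radical center lying on the correct side of each edge, i.e.\ to the signed distances $h_{ij,k},h_{jk,i},h_{ki,j}$ of Definition \ref{loc_del} all being nonnegative. For inversive distance $I>1$ this genuinely fails on part of $Q_h(\mathcal{T})$: the radical center can lie outside the triangle across one edge, the corresponding $h$ is then negative, $\partial\theta_i/\partial u_j$ carries its sign (you yourself note the off-diagonal entries have no fixed sign), and the corresponding piece of the decomposition is positive rather than negative semidefinite. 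So ``sum of negative semidefinite pieces with trivial common kernel'' does not close; the true statement is that the \emph{sum} is negative definite even though the pieces are not individually semidefinite, and that is precisely the hard content of Guo's theorem. (Restricting to the weighted Delaunay locus does not rescue the argument either, since Definition \ref{loc_del} only constrains the sum $h_{ij,k}+h_{ij,l}$ over a hinge, not the sign of each $h$ within a single face; and Theorem \ref{diff_all} needs the lemma on single faces.) Guo's actual proof writes out the $3\times 3$ matrix explicitly and verifies the signs of its leading principal minors by a lengthy algebraic computation valid on all of $Q_h(\mathcal{T})$; some such direct verification (or an equivalent algebraic factorization) has to replace the geometric decomposition in your step on negative definiteness. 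Your instinct to worry about the case $\Xi_{ijk}\le 0$ is sound, but analytic continuation of the sub-angle formulas only fixes the \emph{definitions}; it does not fix the sign problem above.
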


Since $K_i=2\pi-\sum_{v_i \in f} \theta_i$, by the lemma above, we have the following theorem.

\begin{thm}\cite{guo2011local}\cite{zhang2014unified}\label{diff_all}
    Given a piecewise flat surface $(S,V,d_f)$ and weight $\mathbf{r} \in R(d_f)$, denote the discrete curvature at vertices by $\mathbf{K}$, then the weighted Delaunay triangulation $\mathcal{T}$ satisfies $([d_h],\mathbf{r}) \in \tilde\Psi_\mathcal{T}(D_h(\mathcal{T}))$. Let $\mathbf{u}=\log\tanh(\frac12\mathbf{r})$, then the Jacobi matrix $\dfrac{\partial \mathbf{K}}{\partial \mathbf{u}}$ is symmetric and negative definite.
\end{thm}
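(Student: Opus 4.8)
The plan is to reduce the global statement to the per-triangle statement of Lemma \ref{diff_trig} by the standard assembly of symmetric matrices that are glued along shared coordinates, and then to handle the non-smooth locus separately. First I would fix a weighted Delaunay triangulation $\mathcal{T}$ with $([d_h],\mathbf{r}) \in \tilde\Psi_\mathcal{T}(D_h(\mathcal{T}))$, which exists by Theorem \ref{whole_del_h}. On the chart $\tilde\Psi_\mathcal{T}$ (with $\mathbf{I}$ held fixed) the metric $d_h$, hence every inner angle $\theta_i^f$ of every face $f \in F(\mathcal{T})$, is a real analytic function of $\mathbf{r}$; the cone angle at $v_i$ is $\varphi_i = \sum_{f \ni v_i}\theta_i^f$, so $K_i = 2\pi - \sum_{f \ni v_i}\theta_i^f$. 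Differentiating in $\mathbf{u} = \log\tanh(\tfrac12\mathbf{r})$ and grouping the terms face by face gives
\[
    \frac{\partial \mathbf{K}}{\partial \mathbf{u}} = -\sum_{f \in F(\mathcal{T})}\iota_f\!\left(\frac{\partial(\theta_i^f,\theta_j^f,\theta_k^f)}{\partial(u_i,u_j,u_k)}\right),
\]
where $\iota_f$ denotes the extension of a $3\times3$ matrix to an $n\times n$ matrix by placing it in the rows and columns indexed by the three corners of $f$ and zeros elsewhere; only faces incident to the edge $e_{ij}$ contribute to the $(i,j)$ entry when $i\neq j$, and all faces at $v_i$ to the $(i,i)$ entry.

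Symmetry is then immediate: writing $M^f := \partial(\theta_i^f,\theta_j^f,\theta_k^f)/\partial(u_i,u_j,u_k)$, each $M^f$ is symmetric by Lemma \ref{diff_trig}, and a sum of $\iota_f$-extensions of symmetric matrices is symmetric. For definiteness I would evaluate the quadratic form: for $\xi \in \mathbb{R}^V$,
\[
    \xi^{\top}\frac{\partial \mathbf{K}}{\partial \mathbf{u}}\xi = -\sum_{f \in F(\mathcal{T})}(\xi|_f)^{\top}M^f(\xi|_f),
\]
where $\xi|_f \in \mathbb{R}^3$ is the restriction of $\xi$ to the corners of $f$. Each $M^f$ being definite by Lemma \ref{diff_trig}, every summand has a fixed sign, so $\partial\mathbf{K}/\partial\mathbf{u}$ is at least semi-definite; strictness, hence the asserted definiteness, follows from a connectivity argument: if $\xi \neq 0$ then $\xi_i \neq 0$ for some $v_i$, and $v_i$ is a corner of some face $f$, so $\xi|_f \neq 0$ and that face contributes a strictly nonzero term.

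Two points need care. The minor one is the $\Delta$-complex convention: a face may meet a vertex more than once, so $\iota_f$ and $\xi|_f$ must be read with the appropriate multiplicity (a corner glued to $v_i$ carries $\xi_i$ in its slot); the connectivity argument is unaffected, since $M^f$ is definite on all of $\mathbb{R}^3$ and $\xi|_f$ is nonzero as soon as some incident $\xi_i$ is. The main obstacle is triangulation-independence: when $([d_h],\mathbf{r})$ lies on a wall of the cell decomposition of Proposition \ref{cell} there are several weighted Delaunay triangulations, and one must check that the matrix above does not depend on the choice. I would argue this from the fact that $\mathbf{K}$ is intrinsic to $d_h$ (it records cone angles, not triangulation data) together with the $C^1$-dependence of $d_h$ on $\mathbf{r}$ along the charts established in Theorem \ref{c1diff}; equivalently, replacing a geodesic diagonal inside a polygon of the weighted Delaunay tessellation by the other diagonal leaves every vertex angle sum, hence every $K_i$, unchanged, so the displayed formula computed from any admissible $\mathcal{T}$ yields the same matrix. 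With triangulation-independence in hand, the single-chart computation above finishes the proof.
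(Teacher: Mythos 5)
Your proposal takes essentially the same route as the paper: the paper's proof of this theorem is precisely the one-line assembly of the per-face Jacobians from Lemma \ref{diff_trig} into a sparse $n\times n$ matrix, together with the same remark that a face glued to itself is handled by adding entries at the corresponding positions, and your $\iota_f$/quadratic-form bookkeeping plus the connectivity argument for strictness is the standard way to make that one line precise. Your paragraph on triangulation-independence is not needed for this statement, since the theorem fixes one weighted Delaunay triangulation $\mathcal{T}$ and asserts a property of the Jacobian computed on that chart.

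One point you should not gloss over: with Lemma \ref{diff_trig} as stated (each angle Jacobian $M^f$ symmetric \emph{negative} definite), your identity $\partial\mathbf{K}/\partial\mathbf{u}=-\sum_f\iota_f(M^f)$ produces a \emph{positive} definite matrix, whereas the theorem literally claims negative definiteness; writing ``the asserted definiteness'' hides this sign clash rather than resolving it. The inconsistency sits in the paper's statements rather than in your assembly (Proposition \ref{convex} goes on to use positive definiteness of $\hess\mathcal{E}_d=\partial\mathbf{K}/\partial\mathbf{u}$, which is what your computation gives), but a complete proof must commit to the sign that the decomposition actually yields and note that the theorem's wording should be read accordingly.
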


The Jacobi matrix above is a sparse matrix formed by combining matrices from Lemma \ref{diff_trig}. If there is a face glued by itself in the triangulation, we can simply add the elements at the corresponding positions of the matrix, and the conclusion remains unchanged.

Lemma \ref{diff_trig} and Theorem \ref{diff_all} describe the local differential properties. With the help of Theorem \ref{c1diff}, we can define the curvature map globally.

\begin{defn}\label{kappa}
    Suppose $\Sigma$ is the related surface of $(S,V)$ with $n$ vertices. Given a hyperbolic metric with geodesic boundaries $d$ on $\Sigma$, define that  
    \[
        \kappa_d \colon \mathbb{R}_{<0}^n \to (-\infty,2\pi)^n \quad
        \mathbf{u} \mapsto \mathbf{K}=(K_1,\dots,K_n),
    \]
    where $\mathbf{K}$ is the discrete curvature of the metric at the first component in $\mathbf{B}^{-1}([d],\sech(\mathbf{u}))$. 
    
    Moreover, define the \emph{Ricci potential} as
    \[
        \mathcal{E}_d=\mathcal{E}_d(\mathbf{w}) \coloneqq \int ^\mathbf{w} \sum_{i=1}^n K_i\,du_i.
    \]
\end{defn}

Considering that $\mathbf{B}$ is $C^1$ and the map from edge length to curvature is real analytic by cosine law, we know $\kappa_d$ is $C^1$ as a restriction of the composition of these two maps. Note that the domain of $\mathbf{K}$ here is expanded than the one in Theorem \ref{diff_all}. To show that the Ricci potential is well-defined, let
\[
    U_i\coloneqq \left\{\, \mathbf{u}\in\mathbb{R}_{<0}^V \mid
    \tilde\Omega_{\mathcal{T}_i} \left([d],\arcth\circ\sech(\mathbf{u}) \right)
    \subset D_h(\mathcal{T}_i) \,\right\}.
\]
Zhu proved that the number of the set like this finite in \cite{zhu2019discrete}. Denote them by $U_1,\dots,U_M$, then
\begin{equation}\label{rn_cell}
    \mathbb{R}_{<0}^V=\bigcup_{j=1}^M U_j.
\end{equation}

By Formula \eqref{ineq_h} and \eqref{ineq_h2}, every $U_i$ is real analytic homeomorphic to an $n$-dimensional convex polytope, and some of them intersect to get a low dimension one. Then we have
\begin{prop}
    Formula \ref{rn_cell} form a finite CW decomposition of $\mathbb{R}^V$.
\end{prop}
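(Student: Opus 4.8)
The plan is to realize $\mathbb{R}_{<0}^V = \bigcup_{j=1}^M U_j$ as a slice of the locally finite cell decomposition of $Teich(\Sigma)\times(0,1)^V$ furnished by Proposition \ref{cell}, transported through an explicit real-analytic diffeomorphism. Fix the hyperbolic metric $d$ on $\Sigma$. First I would note that
\[
    \iota \colon \mathbb{R}_{<0}^V \to \{[d]\}\times(0,1)^V, \qquad
    \mathbf{u}\mapsto ([d],\sech\mathbf{u})
\]
is a real-analytic homeomorphism onto the coordinate slice, since $\sech\colon\mathbb{R}_{<0}\to(0,1)$ is a real-analytic diffeomorphism. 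Read in the chart $\tilde\Omega_\mathcal{T}^{-1}$, this slice is the affine coordinate subspace $\{\mathbf{I}=\mathbf{I}^{(d)}_\mathcal{T}\}\times\mathbb{R}_{>0}^V$ with $\mathbf{I}^{(d)}_\mathcal{T}=\cosh\Omega_\mathcal{T}^{-1}([d])$; hence $\iota^{-1}\!\bigl(\tilde\Omega_\mathcal{T}(D_h(\mathcal{T}))\cap(\{[d]\}\times(0,1)^V)\bigr)$ is exactly the set $U_\mathcal{T}$, now cut out in the $\mathbf{u}$-coordinates by those of the inequalities \eqref{ineq_h} that remain when $a,b,c,d,e$ (hence $f$, via \eqref{f}) are frozen at the values prescribed by $[d]$. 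By the observation recorded just before this proposition, each nonempty $U_\mathcal{T}$ is homeomorphic to an $n$-dimensional convex polytope.

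Next I would bring in the two combinatorial inputs. Only finitely many triangulations yield a nonempty $U_\mathcal{T}$: this is the finiteness statement of \cite{zhu2019discrete} quoted before \eqref{rn_cell}; relabel the nonempty ones $U_1,\dots,U_M$. That they cover $\mathbb{R}_{<0}^V$ is \eqref{rn_cell}, which is the $\iota$-pullback of the covering \eqref{union_teich_h}, itself a consequence of the existence of a weighted Delaunay triangulation for every hyperbolic polyhedral metric with a legal weight (Theorems \ref{local_whole_h} and \ref{whole_del_h}).

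For the CW axioms, take as open cells the relative interiors of all nonempty intersections $U_S\coloneqq\bigcap_{j\in S}U_j$, $S\subseteq\{1,\dots,M\}$; there are finitely many. Each $U_j$ is a closed convex polytope up to real-analytic homeomorphism, so its relative interior is an open cell and its topological frontier is the union of its proper faces; moving across a facet of $U_j$ changes the weighted Delaunay triangulation by a single flip (Theorem \ref{whole_del_h} together with the finite-flip part of Theorem \ref{local_whole_h}), so every such face equals some $U_j\cap U_{j'}$, i.e. one of the $U_S$, of strictly lower dimension. That two cells always meet in a common face, and that frontiers are subcomplexes, are obtained by pulling back through $\iota$ the corresponding statements for \eqref{union_teich_h} (Proposition \ref{cell}): in the ambient decomposition $\tilde\Omega_{\mathcal{T}_j}(D_h(\mathcal{T}_j))\cap\tilde\Omega_{\mathcal{T}_{j'}}(D_h(\mathcal{T}_{j'}))$ is a common face, and intersecting with the coordinate slice $\{\mathbf{I}=\mathbf{I}^{(d)}\}$ preserves this incidence. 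Local finiteness is automatic from $M<\infty$, and the attaching maps are the polytope inclusions. Hence $\{U_j\}$ gives a finite (regular, polytopal) CW decomposition of $\mathbb{R}_{<0}^V$.

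I expect the main obstacle to be the justification that the coordinate slice $D_h(\mathcal{T})\cap\{\mathbf{I}=\mathbf{I}^{(d)}_\mathcal{T}\}$ really is a closed topological cell and that the sliced face poset coincides with the ambient one: a priori a codimension-$k$ face of $\bigcup_\mathcal{T}\tilde\Omega_\mathcal{T}(D_h(\mathcal{T}))$ might meet the slice non-transversally or in a stratum of the wrong dimension. Ruling this out uses the explicit form of \eqref{ineq_h} with $\mathbf{I}$ frozen, together with the strict monotonicity of $h_{ij,k}+h_{ij,l}$ established in the lemma of Section \ref{section3} preceding Theorem \ref{whole_del_h}, which forces each facet to be genuinely codimension one along the slice. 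The finiteness $M<\infty$ is the other substantive ingredient, but it is cited rather than reproved.
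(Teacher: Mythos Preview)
Your proposal is correct and follows essentially the same line as the paper. The paper's own justification is the single sentence preceding the proposition---``By Formula \eqref{ineq_h} and \eqref{ineq_h2}, every $U_i$ is real analytic homeomorphic to an $n$-dimensional convex polytope, and some of them intersect to get a low dimension one''---together with the cited finiteness $M<\infty$ from \cite{zhu2019discrete}; you have simply unpacked this by realizing the $U_j$ as the $\iota$-pullback of the ambient decomposition of Proposition~\ref{cell} and checking the CW axioms explicitly.
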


From Theorem \ref{diff_all}, we have $\frac{\partial K_i}{\partial u_j}=\frac{\partial K_j}{\partial u_i}$ holds on each $U_i$, and since $K_i$ is $C^1$ continuous, it follows that $\frac{\partial K_i}{\partial u_j}=\frac{\partial K_j}{\partial u_i}$ holds on the entire simply connected $\mathbb{R}_{<0}^V$. Therefore, the differential form $\sum_{i=1}^n K_i du_i$ is a closed form and hence an exact form, and its integral is independent of the choice of path. Thus, $\mathcal{E}_d$ is well-defined on $\mathbb{R}_{<0}^V$. The reason for not including an integration starting point is because the Ricci energy could differ by a constant.

\begin{prop}\label{convex}
    $\mathcal{E}_d$ is a $C^2$ function on $\mathbb{R}_{<0}^V$, and it is a strictly convex.
\end{prop}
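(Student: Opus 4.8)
The plan is to establish the two claimed properties of $\mathcal{E}_d$ separately, deriving both from the structure already set up. For the $C^2$ claim, note first that $\mathcal{E}_d$ is defined as the integral of the closed $1$-form $\omega=\sum_{i=1}^n K_i\,du_i$ on the simply connected domain $\mathbb{R}_{<0}^V$, so $\mathcal{E}_d$ is well-defined (this has already been argued in the text above), and $\nabla\mathcal{E}_d = (K_1,\dots,K_n) = \kappa_d(\mathbf{u})$. Since $\kappa_d$ is $C^1$ — being the restriction of a composition of the $C^1$ map $\mathbf{B}^{-1}$ (Theorem \ref{c1diff}) with the real-analytic map sending edge lengths to inner angles via the cosine law \eqref{cos} — the gradient of $\mathcal{E}_d$ is $C^1$, hence $\mathcal{E}_d \in C^2(\mathbb{R}_{<0}^V)$. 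In particular the Hessian $\hess\mathcal{E}_d = \dfrac{\partial\mathbf{K}}{\partial\mathbf{u}}$ exists and is continuous.

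For strict convexity the plan is to show that this Hessian is negative definite everywhere — wait, here $\mathcal{E}_d$ is defined with $+\sum K_i\,du_i$ and Theorem \ref{diff_all} says $\partial\mathbf{K}/\partial\mathbf{u}$ is \emph{negative} definite, so strictly speaking the convex function is $-\mathcal{E}_d$; I will follow the paper's sign convention and simply argue that $\hess\mathcal{E}_d$ has a definite sign, which is what matters for the later variational argument. The key point is that Theorem \ref{diff_all} gives the definiteness of $\partial\mathbf{K}/\partial\mathbf{u}$ at every point $\mathbf{u}$ lying in the interior of one of the top-dimensional cells $U_j$ of the CW decomposition \eqref{rn_cell}, because on each such cell $\mathbf{B}$ agrees with a single $\tilde\Psi_{\mathcal{T}_j}$ and the weighted Delaunay triangulation is fixed. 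So the Hessian is definite on the open dense set $\bigcup_j \interior U_j$. Then I would invoke continuity: $\hess\mathcal{E}_d$ is continuous on all of $\mathbb{R}_{<0}^V$ by the $C^2$ statement, it is symmetric everywhere (limit of symmetric matrices), and it is negative \emph{semi}definite everywhere as a limit of negative definite matrices. To upgrade semidefinite to definite at a boundary point $\mathbf{u}_0 \in \partial U_j \cap \partial U_{j'}$, I would use Lemma \ref{dfdF}: there $dF=df$, meaning the transition map $\tilde\Psi_{\mathcal{T}'}\circ\tilde\Psi_{\mathcal{T}}^{-1}$ has identity Jacobian at $\mathbf{u}_0$ in the relevant coordinates, so the one-sided Hessians from $U_j$ and from $U_{j'}$ computed via their respective charts actually coincide as the genuine (two-sided) Hessian, and each one-sided limit is a limit of the negative definite matrices from the interior of that cell restricted along directions pointing into the cell — combining the directions from both cells spans $\mathbb{R}^n$, forcing the limiting quadratic form to be negative definite, not merely semidefinite.

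Concretely the steps in order are: (1) record $\nabla\mathcal{E}_d=\kappa_d$ and cite $C^1$-ness of $\kappa_d$ to get $\mathcal{E}_d\in C^2$ and the existence of a continuous symmetric Hessian; (2) on the interior of each cell $U_j$, identify $\hess\mathcal{E}_d$ with $\partial\mathbf{K}/\partial\mathbf{u}$ for the fixed weighted Delaunay triangulation $\mathcal{T}_j$ and apply Theorem \ref{diff_all} to get strict definiteness there; (3) by continuity conclude definiteness (up to sign) of $\hess\mathcal{E}_d$ on the dense open set, hence semidefiniteness everywhere; (4) at the lower-dimensional cells, use Lemma \ref{dfdF} / Theorem \ref{c1diff} to argue the Hessian is the common limit from all adjacent top cells and that these limits together control all directions, ruling out a degenerate direction; (5) conclude $\hess\mathcal{E}_d$ is definite on all of $\mathbb{R}_{<0}^V$, hence $\mathcal{E}_d$ (or $-\mathcal{E}_d$) is strictly convex.

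The main obstacle is step (4): showing that strict (not just semi-) definiteness survives the passage to the boundary strata of the CW decomposition \eqref{rn_cell}. The naive continuity argument only yields semidefiniteness, and one genuinely needs the compatibility of first derivatives across edge flips (Lemma \ref{dfdF}, and the fact that $\mathbf{B}$ is $C^1$ from Theorem \ref{c1diff}) to see that the Hessian at a boundary point is unambiguous and is pinned down, in every direction, by data coming from a top-dimensional cell on which Theorem \ref{diff_all} applies. A clean way to package this is: for any nonzero $\xi\in\mathbb{R}^n$, pick a top-dimensional cell $U_j$ whose closure contains $\mathbf{u}_0$ and such that $\xi$ is not tangent to $\partial U_j$ in a way that kills the estimate (possible since the $U_j$ cover a neighborhood), then $\xi^\top(\hess\mathcal{E}_d(\mathbf{u}_0))\xi = \lim_{\mathbf{u}\to\mathbf{u}_0,\ \mathbf{u}\in\interior U_j}\xi^\top(\partial\mathbf{K}/\partial\mathbf{u})\xi < 0$ by Theorem \ref{diff_all} together with the fact that the one-sided limit of a continuous negative-definite-valued map along an approach into the cell is strictly negative when the limiting matrix is known a priori to be negative semidefinite and the convergence is of the full continuous Hessian. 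Care is needed to phrase this last implication rigorously; once it is in place the proposition follows immediately.
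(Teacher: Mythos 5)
Your steps (1)--(3) match the paper's proof: $\nabla\mathcal{E}_d=\kappa_d$ is $C^1$, hence $\mathcal{E}_d$ is $C^2$ with a continuous symmetric Hessian, and Theorem \ref{diff_all} gives definiteness of $\partial\mathbf{K}/\partial\mathbf{u}$ cell by cell. (The sign discrepancy you flag is real --- Theorem \ref{diff_all} gives a \emph{negative} definite Jacobian while the paper's proof asserts positive definiteness --- and your decision to track ``definite of a fixed sign'' is the right way to handle it.) The problem is step (4). The implication you lean on --- that a one-sided limit of negative definite matrices, known a priori to be negative semidefinite, must be negative definite --- is false: $-t$ for $t>0$ is a continuous family of negative definite $1\times1$ matrices whose limit at $t=0$ is $0$. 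Likewise, ``the directions from adjacent cells span $\mathbb{R}^n$'' does not rule out a degenerate direction of the common limiting form, since every adjacent cell contributes the \emph{same} limit matrix (that is exactly what $C^1$-ness of $\mathbf{B}$ gives you), so you get no new information from combining cells. As written, step (4) only yields semidefiniteness at the lower-dimensional strata, which is not enough for strict convexity.

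The gap is avoidable because the limiting argument is not needed at all. The cells $U_j$ are defined by the closed conditions $D_h(\mathcal{T}_j)$, so they are \emph{closed}, and Theorem \ref{diff_all} (resting on Lemma \ref{diff_trig}, i.e.\ the Guo--Zhang computation for a single fixed triangulation) holds wherever $(\mathbf{I},\mathbf{r})\in Q_h(\mathcal{T}_j)$ --- in particular at every point of $U_j$ including its boundary, since the triangle inequalities and compactness of the orthogonal circles persist there. So at a point $\mathbf{u}_0\in\partial U_j$ you do not take a limit of Hessians from $\interior U_j$: you evaluate the real-analytic chart expression $\partial\mathbf{K}/\partial\mathbf{u}$ of the fixed triangulation $\mathcal{T}_j$ \emph{at} $\mathbf{u}_0$, where it is negative definite by Theorem \ref{diff_all}. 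The only role of the global continuity of $\hess\mathcal{E}_d$ (equivalently, of Lemma \ref{dfdF} and Theorem \ref{c1diff}) is to guarantee that this chart-wise value \emph{is} the genuine two-sided Hessian of $\mathcal{E}_d$ at $\mathbf{u}_0$, i.e.\ that the answers from the different adjacent charts agree with each other and with the derivative of $\kappa_d$. With that correction your argument closes and coincides with the paper's.
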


\begin{proof}
    We know that $\nabla \mathcal{E}_d(\mathbf{u})=\kappa_d(\mathbf{u})$ is $C^1$ by definition, then $\mathcal{E}_d$ is $C^2$. The matrix $\hess \mathcal{E}_d$ is continuous on $\mathbb{R}_{<0}^V$.

    From Theorem \ref{diff_all}, we know that $\hess \mathcal{E}_d$ is symmetric positive definite on each $U_1,\dots,U_m$. Moreover, since it is globally continuous on $\mathbb{R}_{<0}^V$, it is also symmetric positive definite on the entire space $\mathbb{R}_{<0}^V$. Hence, $\mathcal{E}_d$ is strictly convex.
\end{proof}

Now we prove the main theorem of this paper.

\begin{proof}[Proof of Theorem \ref{main_h}]  
    By Proposition \ref{convex}, we know $\kappa_d|_U$ is injective from variational principle. Denote that
    \[
        K=\left\{\, \mathbf{K}=(K_1,\dots,K_n) \in (-\infty,2\pi)^V \mid \sum_{i=1}^n K_i>2\pi \chi(S) \,\right\}.
    \]

    Then $K$ a bounded open subset of $\mathbb{R}^V$.
    By the definition of $\kappa_d$ we know it is continuous and $\image \kappa_d \subset K$. By Brouwer's invariance of domain theorem, the map $\kappa_d \colon \mathbb{R}_{<0}^V \to K$ is a continuous injective map between real $n$ dimensional topological manifolds, thus it is an open map. 

    We aim to prove that for any infinite sequence $\left\{\mathbf{u}^{(m)}\right\} \subset \mathbb{R}_{<0}^V$ satisfying
    \begin{equation}\label{infty}
        \lim_{k \to \infty} \mathbf{u}^{(m)}=\mathbf{u}^\infty
        \in [-\infty,0]^n \setminus \mathbb{R}_{<0}^V,
    \end{equation}
    there exists a subsequence $\left\{\mathbf{u}^{(m_i)}\right\}$ such that
    \[
        \lim_{i \to \infty}\kappa_d(\mathbf{u}^{(m_i)}) \in \partial K
    \]

    Since $\mathbb{R}_{<0}^V=\bigcup_{j=1}^M U_j$ is a finite cell decomposition, by the pigeonhole principle, there exists some cell $U_j \subset \tilde \Omega_\mathcal{T}(D_h(\mathcal{T}))$ that contains infinitely many elements from $\left\{\mathbf{u}^{(m)}\right\}$. Without loss of generality, we selected them as a subsequence, but still use the notation $\left\{\mathbf{u}^{(m)}\right\}$ for concise.
    Then we discuss it on the triangulation $\mathcal{T}=\mathcal{T}_j$.

    Consider $\mathbf{u}^\infty=(u^\infty_1,\dots,u^\infty_n)$, if $u^\infty_1=\dots=u^\infty_n=-\infty$, then for any vertex $v_i$, the radius $\lim_{m\to\infty}r_i^{(m)}=0$. That means the area of this hyperbolic polyhedral surface tends to $0$, which satisfies $\kappa_d(\mathbf{u}^{(m)}) \to \partial K$ by Gauss-Bonnet formula $\sum_{i=1}^{n}K_i=2\pi \chi(S)+\area(S)$. If some $u^\infty_k=0$, or $r^{(m)}_k \to +\infty$, then $K^{(m)}_k \to 2\pi$ also tends to boundary.

    Else, there are some but not all components tend to $-\infty$. Let $V_{good} \subsetneq V$ be the non-empty set of vertices corresponding to these components, which are denoted as \emph{good vertices}. The remaining vertices that are not good are denoted as \emph{bad vertices}. Note that the limit of the components of bad vertices may be a negative number or zero as well.

    Firstly, we claim that a triangle $f_{ijk}$ in $\mathcal{T}$ cannot contain exactly one edge with two vertices being bad (note that $v_i$, $v_j$ and $v_k$ may coincide). Otherwise, without loss of generality, assume that $v_i$ is a good while $v_j$ and $v_k$ are bad. During the convergence of $\{\mathbf{u}^{(m)}\}$, since $r_j^{(m)},r_k^{(m)} \ge \epsilon>0$ and $r_i^{(m)} \to 0$, when $m$ is sufficiently large,

    we have $l_{ij}^{(m)}=\cosh r_i^{(m)}\cosh r_j^{(m)}+I_{ij}\sinh r_i^{(m)}\sinh r_j^{(m)} \to r_j^\infty>0$ and $l_{ik}^{(m)} \to r_k^\infty>0$. Moreover, $l_{jk}^{(m)} \to r_j^\infty+r_k^\infty+\delta$ with $\delta \ge 0$. Thus, when $m$ is sufficiently large, the orthogonal circle of face $f_{ijk}$ is no longer compact, which contradicts with $\left\{\mathbf{u}^{(m)}\right\} \subset \tilde \Omega_\mathcal{T}(D_h(\mathcal{T})) \subset \tilde \Omega_\mathcal{T}(\Xi(\mathcal{T}))$.
    
    Secondly, Since $S$ is connected, we can select an edge that connects a good vertex to a bad vertex. All the neighborhood of the bad vertex must be good, otherwise, we could find a neighboring triangle that contradicts the previous argument.

    Finally, by the formula of the inverse distance circle packing, the curvature at this bad vertex converges to $2\pi$ as $k \to \infty$. Namely, $\kappa_d(\mathbf{u}^{(m)})$ converges to the boundary of $K$ up to the subspace topology.
    
    Therefore, we have proved that $\image \kappa_d=K$.
    Otherwise, suppose $\mathbf{K}_0 \in K \setminus\image \kappa_d$, since $\image \kappa_d$ is not empty, we can connect a path $\gamma \colon I \to K$ from the interior of $\image \kappa_d$ to $\mathbf{K}_0$. Because $\image \kappa_d$ is open and simply connected, there exist $s \in I$ such that for any sufficiently large $m$, we have $\gamma(s-1/m) \in \image \kappa_d$ and $\gamma([s,1])\in K\setminus\image\kappa_d$. Let $\mathbf{u}^{(m)}=\kappa_d^{-1}(\gamma(s-1/m))$, then $\mathbf{u}^{(m)}$ converge to $[-\infty,0]^n \setminus \mathbb{R}_{<0}^V$, however, of which any subsequence can not converge to the boundary of $K$, which is a contradiction. Thus, we get that $\kappa_d$ is a bijection.

    The condition of the theorem requires that $\bar{\mathbf K} \in K$, so there exist a unique preimage $\mathbf{u}=\kappa_d^{-1}(\bar{\mathbf K})$, then $\mathbf{B}^{-1}\left([d],\arcth\circ\sech(\mathbf{u})\right)$ is the inversive distance circle packing with discrete curvature $\bar{\mathbf K}$. This is the end of the proof.
\end{proof}

\subsection{Discrete Ricci flow}

The \emph{discrete Ricci flow} of $2$-dimensional polyhedral surface with inversive distance circle packing is defined by
\begin{equation}\label{ricciflow}
    \frac{dr_i}{dt}=-(K_i-\bar K)\sinh r_i
\end{equation}
By Theorem \ref{main_h}, this flow can be extended.

\begin{thm}\label{ricci_flow}
    For any initial $\mathbf{r}(0)=\mathbf{r}_0 \in \mathbb{R}_{>0}$, the ODE \eqref{ricciflow} have the solution $\mathbf{r}(t)$ existing on $\left[0,+\infty\right)$, whose limitation $\mathbf{r}^\infty \coloneqq \lim\limits_{t \to +\infty}\mathbf{r}(t)$ satisfies that on every vertex $v_i$, the discrete curvature of $\mathbf{B}^{-1}([d],\mathbf{r}^\infty)$ is $\bar K$.
\end{thm}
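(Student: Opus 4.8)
The plan is to read \eqref{ricciflow} as the negative gradient flow of the strictly convex Ricci potential, to prove a no--blow--up estimate peculiar to the inversive distance packing, and then to run the classical convergence argument for gradient flows of convex functions.

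First I would pass to the coordinates $u_i=\log\tanh(r_i/2)\in\mathbb{R}_{<0}$; since $du_i/dr_i=1/\sinh r_i$, the system \eqref{ricciflow} becomes $\dot u_i=\bar K_i-K_i$ on $\mathbb{R}_{<0}^V$. The flow changes only $\mathbf r$ and leaves $\mathbf I$ fixed (inside a cell $U_j$ the inversive distance is $\mathbf I=\cosh(\Omega_{\mathcal T_j}^{-1}([d]))$, depending only on the conformal class $[d]$ and on $\mathcal T_j$; across a cell wall the Ptolemy surgery of Lemma \ref{intersection} again preserves $[d]$), so the trajectory stays in the discrete conformal class, and with $\mathbf K=\kappa_d(\mathbf u)$ as in Definition \ref{kappa} the flow is $\dot{\mathbf u}=-\nabla\widetilde{\mathcal{E}}_d(\mathbf u)$, where $\widetilde{\mathcal{E}}_d(\mathbf u):=\mathcal{E}_d(\mathbf u)-\sum_i\bar K_iu_i$. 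By Proposition \ref{convex} the function $\widetilde{\mathcal{E}}_d$ is $C^2$ and strictly convex on $\mathbb{R}_{<0}^V$, its gradient $\kappa_d-\bar{\mathbf K}$ is globally $C^1$ by Theorem \ref{c1diff} (so the deformation is one well-defined $C^1$ gradient flow even as it crosses the weighted--Delaunay walls and triggers edge flips), and by Theorem \ref{main_h} it has the unique critical point $\mathbf u^\ast:=\kappa_d^{-1}(\bar{\mathbf K})$, which is therefore its unique global minimum. From monotonicity of the gradient of a convex function together with $\nabla\widetilde{\mathcal{E}}_d(\mathbf u^\ast)=0$ one gets $\frac{d}{dt}|\mathbf u(t)-\mathbf u^\ast|^2=-2\langle\nabla\widetilde{\mathcal{E}}_d(\mathbf u(t)),\mathbf u(t)-\mathbf u^\ast\rangle\le 0$, so $|\mathbf u(t)-\mathbf u^\ast|$ is non-increasing; in particular each $u_i(t)$ stays bounded below, i.e. each $r_i(t)$ stays bounded below by a fixed positive constant.

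It remains to bound the radii from above, which gives long-time existence. Since $\kappa_d-\bar{\mathbf K}$ maps into the bounded set $K-\bar{\mathbf K}$, $|\dot{\mathbf u}|$ is uniformly bounded, so no finite-time escape to infinity is possible, and the only way the solution could leave $\mathbb{R}_{<0}^V$ is $r_i\to\infty$. Here is the key input: in $L_h(\mathbf I,\mathbf r)$ one has $\cosh l_{ij}=\cosh r_i\cosh r_j+I_{ij}\sinh r_i\sinh r_j$, hence $l_{ij}\ge r_i+r_j$ while $l_{jk}\le r_j+r_k+C_{jk}$ with $C_{jk}=\log(2(1+I_{jk}))$; feeding this into the cosine law \eqref{cos} and using that $r_i$ occurs in both edges at $v_i$ but not in the opposite edge $e_{jk}$, one obtains $1-\cos\theta_i=O(e^{-2r_i})$ in every face $f_{ijk}$. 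Because the flow visits only the finitely many triangulations $\mathcal T_1,\dots,\mathcal T_M$ of Proposition \ref{cell} (on each of which $\mathbf I$ is the fixed vector above), the constants $C_{jk}$ are uniformly bounded, so there is an $R^\ast$ with the following property: if $\max_\alpha r_\alpha(t)\ge R^\ast$, then at the vertex $v_i$ realizing the maximum all inner angles are so small that $K_i>\bar K_i$ (recall $\bar K_i<2\pi$), whence $\dot r_i=-(K_i-\bar K_i)\sinh r_i<0$. Therefore $\max_\alpha r_\alpha(t)\le\max\{\max_\alpha r_\alpha(0),R^\ast\}$, the trajectory remains in a compact subset $\mathcal K\subset\mathbb{R}_{<0}^V$, and since the right-hand side is locally Lipschitz, the solution exists on $[0,+\infty)$.

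For convergence, $t\mapsto\widetilde{\mathcal{E}}_d(\mathbf u(t))$ satisfies $\frac{d}{dt}\widetilde{\mathcal{E}}_d(\mathbf u(t))=-|\nabla\widetilde{\mathcal{E}}_d(\mathbf u(t))|^2\le 0$ and is bounded below on the compact set $\mathcal K$, hence converges, while $|\mathbf u(t)-\mathbf u^\ast|$ decreases to some $\ell\ge 0$. If $\ell>0$, then $\mathbf u(t)$ stays in the compact set $\mathcal K\cap\{\ell\le|\mathbf u-\mathbf u^\ast|\le|\mathbf u(0)-\mathbf u^\ast|\}$, which avoids the unique zero $\mathbf u^\ast$ of $\nabla\widetilde{\mathcal{E}}_d$, so $|\nabla\widetilde{\mathcal{E}}_d|^2\ge\eta>0$ there and $\widetilde{\mathcal{E}}_d(\mathbf u(t))\le\widetilde{\mathcal{E}}_d(\mathbf u(0))-\eta t\to-\infty$, contradicting boundedness below; hence $\ell=0$ and $\mathbf u(t)\to\mathbf u^\ast$. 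Translating back, $\mathbf r(t)\to\mathbf r^\infty$ with $\tanh r_i^\infty=\sech u_i^\ast$, and by construction the discrete curvature of $\mathbf B^{-1}([d],\arcth\sech\mathbf u^\ast)$ equals $\kappa_d(\mathbf u^\ast)=\bar{\mathbf K}$, as required. I expect the main obstacle to be exactly the upper bound $r_i\not\to\infty$: this is the hyperbolic counterpart of controlling the circumscribed circle in \cite{gu2018discreteII}, and it is where the compactness of the orthogonal circle (Theorem \ref{del_in_trig_h}) and the explicit edge-length formula of $L_h$ have to be exploited; a lesser technical point, handled by Theorem \ref{c1diff} and Proposition \ref{cell}, is keeping the deformation a single $C^1$ gradient flow through the cell walls where edge flips occur.
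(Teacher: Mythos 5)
Your proof is correct, and at its core it follows the same variational strategy as the paper's: pass to $\mathbf u=\log\tanh(\mathbf r/2)$, read \eqref{ricciflow} as the negative gradient flow of the normalized Ricci potential, and use Theorem \ref{main_h} together with Proposition \ref{convex} to identify $\mathbf u^\ast=\kappa_d^{-1}(\bar{\mathbf K})$ as the unique critical point and global minimum. Where you genuinely depart from the paper is in the confinement step, and this is worth recording. The paper deduces long-time existence and convergence solely from the fact that $\mathcal{E}_d(\mathbf u(t))$ decreases and is bounded below; by itself that neither prevents the trajectory from reaching the boundary component $u_i=0$ (i.e.\ $r_i\to\infty$) of the open domain $\mathbb{R}_{<0}^V$ in finite time, nor upgrades $\int_0^\infty\lvert\mathbf K-\bar{\mathbf K}\rvert^2\,dt<\infty$ to actual convergence of $\mathbf u(t)$. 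Your two a priori bounds close exactly these gaps: monotonicity of the gradient of the convex potential keeps $\mathbf u(t)$ in a fixed ball about $\mathbf u^\ast$, ruling out $r_i\to 0$, while the estimate $1-\cos\theta_i=O(e^{-2r_i})$ --- uniform over the finitely many cells $U_1,\dots,U_M$ because $\mathbf I$ takes only finitely many values there --- forces $K_i>\bar K_i$ and hence $\dot r_i<0$ once $r_i$ exceeds some $R^\ast$, so the trajectory stays in a compact subset of $\mathbb{R}_{<0}^V$ and the locally Lipschitz vector field extends the solution to $[0,+\infty)$. The final compactness argument converting $\liminf\lvert\nabla\widetilde{\mathcal{E}}_d\rvert=0$ into $\mathbf u(t)\to\mathbf u^\ast$ is likewise supplied by you and only asserted by the paper. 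In short, both arguments rest on the same potential and the same critical-point input from Theorem \ref{main_h}, but your version makes the long-time existence and the convergence of the flow itself (not merely of the potential) explicit, which is the hyperbolic analogue of controlling the circumcircle in the vertex-scaling setting and is precisely where Theorem \ref{del_in_trig_h} and the explicit form of $L_h$ earn their keep.
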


\begin{proof}
    Let $\mathbf{u}(t)\coloneqq \log\tanh\left(\frac12\mathbf{r}(t)\right)$, then ODE \eqref{ricciflow} is
    \[
        \frac{d\mathbf{u}(t)}{dt}=-(\mathbf{K}-\bar{\mathbf{K}}).
    \]

    Define the \emph{normalized} Ricci potential as
    \[
        \mathcal{E}_d(\mathbf{w})=\int ^\mathbf{w} \sum_{i=1}^n (K_i-\bar K_i)\,du_i.
    \]
    Take the derivative with respect to $t$.
    \[
        \begin{aligned}
            \frac{d\mathcal{E}_d(\mathbf{u}(t))}{dt}= & 
            \frac{\partial\mathcal{E}_d}{\partial u_1}\frac{du_1}{dt}+\dots+
            \frac{\partial\mathcal{E}_d}{\partial u_n}\frac{du_n}{dt} \\
            = & -\left(\frac{\partial\mathcal{E}_d}{\partial u_1}(K_1-\bar K_1)+\dots+
            \frac{\partial\mathcal{E}_d}{\partial u_n}(K_n-\bar K_n)\right)\\
            = & -((K_1-\bar K_1)^2+\dots+(K_n-\bar K_n)^2) \le 0.
        \end{aligned}
    \]
    Then $\mathcal{E}_d(\mathbf{u}(t))$ strictly decrease when $\mathbf{K}(t) \ne \bar{\mathbf{K}}$. By Theorem \ref{main_h}, there exist $\mathbf{u}(+\infty) \in U$ such that $\kappa_d(\mathbf{u}(+\infty))=\bar{\mathbf{K}}$, and $\nabla\mathcal{E}_d(\mathbf{u}^\infty)=0$. Thus, $\mathbf{u}^\infty$ is the unique critical point in $U$, and the unique minimum point by convexity. 
    
    Therefore, $\mathcal{E}_d(\mathbf{u}(t))$ has a lower bounded, and the solution can be extended infinitely. When $t \to \infty$, we have $\frac{d\mathcal{E}_d(\mathbf{u}(t))}{dt} \to 0$ and $\mathbf{K}(+\infty) = \bar{\mathbf{K}}$.
\end{proof}

\appendix

\section{Hyperbolic triangle law}

See Figure \ref{fig:hyp3}. Denote the length of a hyperbolic triangle by $x,y,z$ in order, and the opposite angle by $\alpha,\beta,\gamma$. The hyperbolic cosine law is
\begin{equation}\label{cos}
    \begin{aligned}
        \cos \alpha &= \frac{\cosh y \cosh z - \cosh x}{\sinh y \sinh z} \\
        \cosh x &= \frac{\cos \beta \cos \gamma + \cos \alpha}{\sin \beta \sin \gamma}.
    \end{aligned}
\end{equation}

See Figure \ref{fig:hyp6}. Denote the length of a hyperbolic right angle hexagon by $x,c,y,a,z,b$ in order, and the hyperbolic cosine law \cite{mondello2009triangulated} is
\begin{equation}\label{cosh}
    \cosh a= \frac{\cosh y \cosh z + \cosh x}{\sinh y \sinh z}.
\end{equation}

See Figure \ref{fig:hyp4}. Denote the length of a hyperbolic tetragon by $a,x,b,y$ in order, angle two angles at the ends of $x$ are $\frac{\pi}{2}$. Then,
\begin{equation}\label{cosh_4}
    \cosh x= \frac{\sinh a \sinh b + \cosh y}{\cosh a \cosh b}.
\end{equation}

\begin{figure}[ht]
    \centering
    \subfigure{\label{fig:hyp3}}{\includegraphics{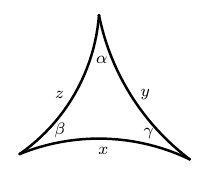}}
    \hspace{2em}
    \subfigure{\label{fig:hyp6}}{\includegraphics{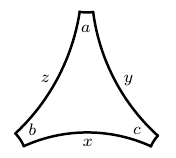}}
    \hspace{2em}
    \subfigure{\label{fig:hyp4}}{\includegraphics{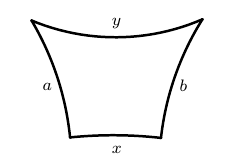}}
    \caption{Hyperbolic cosine laws}
\end{figure}

See Figure \ref{fig:hyp31}. Let $ABCD$ be a hyperbolic tetragon, the angle at $A$, $B$ and $C$ are $\frac{\pi}{2}$, then
\begin{equation}\label{cosh_ratio}
    \cosh AB=\frac{\tanh AD}{\tanh BC} \quad
    \cosh CD=\frac{\sinh AD}{\sinh BC}.
\end{equation}

\begin{figure}[ht]
    \centering \includegraphics{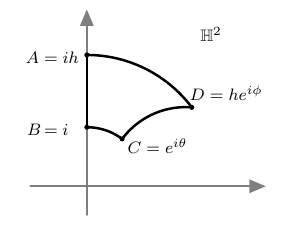}
    \caption{Hyperbolic tetragon with three right angles} \label{fig:hyp31}
\end{figure}

\section{Codes}

The codes for Lemma \ref{dfdF} is below. The software is Mathematica 8.0.4.0.
\begin{verbatim}
X=Numerator@
  Factor[u^2 w^2+v^2 x^2+y^2 z^2-u^2-v^2-w^2-x^2-y^2-z^2+1
    -2 (u v w x+u w y z+v x y z-v w y-u x y-u v z-w x z)/.{
    u->1/Sqrt[1-p^2] 1/Sqrt[1-q^2]
    +a p/Sqrt[1-p^2] q/Sqrt[1-q^2],
    v->1/Sqrt[1-r^2] 1/Sqrt[1-q^2]
    +b r/Sqrt[1-r^2] q/Sqrt[1-q^2],
    w->1/Sqrt[1-r^2] 1/Sqrt[1-s^2]
    +c r/Sqrt[1-r^2] s/Sqrt[1-s^2],
    x->1/Sqrt[1-p^2] 1/Sqrt[1-s^2]
    +d p/Sqrt[1-p^2] s/Sqrt[1-s^2],
    y->1/Sqrt[1-q^2] 1/Sqrt[1-s^2]
    +e q/Sqrt[1-q^2] s/Sqrt[1-s^2],
    z->1/Sqrt[1-p^2] 1/Sqrt[1-r^2]
    +F p/Sqrt[1-p^2] r/Sqrt[1-r^2]}];
Y=a^2+b^2+c^2+d^2+e^2+f^2-a^2 c^2-b^2 d^2-e^2 f^2-1 +
  2(a d e+b c e+a b f+c d f+a b c d+a c e f+b d e f);
p1=(p/.Solve[D[X,p]==0,p][[1]]);
Factor[X/.p->p1]
Factor[D[X,q]/.p->p1/.F->f]
Factor[D[Y,a]/D[Y,f]-D[X,a]/D[X,F]/.p->p1/.F->f]
Factor[D[Y,e]/D[Y,f]-D[X,e]/D[X,F]/.p->p1/.F->f]
\end{verbatim}

The output is one fraction with factor $Y(a,b,c,d,e,F)$ and three fractions with factor $Y(a,b,c,d,e,f)$ in their numerators.

\bibliographystyle{alpha}
\bibliography{ref}

\newcommand{\etalchar}[1]{$^{#1}$}
\begin{thebibliography}{SWGL15}

\bibitem[BH03]{bowers2003planar}
Philip~L Bowers and Monica~K Hurdal.
\newblock Planar conformal mappings of piecewise flat surfaces.
\newblock In {\em Visualization and mathematics III}, pages 3--34. Springer, 2003.

\bibitem[BL23a]{bobenko2023decorated2}
Alexander~I. Bobenko and Carl O.~R. Lutz.
\newblock Decorated discrete conformal equivalence in non-euclidean geometries, 2023.

\bibitem[BL23b]{bobenko2023decorated}
Alexander~I. Bobenko and Carl O.~R. Lutz.
\newblock Decorated discrete conformal maps and convex polyhedral cusps, 2023.

\bibitem[BPS15]{bobenko2015discrete}
Alexander~I Bobenko, Ulrich Pinkall, and Boris~A Springborn.
\newblock Discrete conformal maps and ideal hyperbolic polyhedra.
\newblock {\em Geometry \& Topology}, 19(4):2155--2215, 2015.

\bibitem[BS04a]{bobenko2004variational}
Alexander Bobenko and Boris Springborn.
\newblock Variational principles for circle patterns and {K}oebe's theorem.
\newblock {\em Transactions of the American Mathematical Society}, 356(2):659--689, 2004.

\bibitem[BS04b]{bowers2004uniformizing}
Philip~L Bowers and Kenneth Stephenson.
\newblock Uniformizing dessins and {B}elyimaps via circle packing.
\newblock {\em Memoirs of the American Mathematical Society}, 170(805), 2004.

\bibitem[BS07]{bobenko2007discrete}
Alexander~I Bobenko and Boris~A Springborn.
\newblock A discrete {L}aplace--{B}eltrami operator for simplicial surfaces.
\newblock {\em Discrete \& Computational Geometry}, 38(4):740--756, 2007.

\bibitem[CL{\etalchar{+}}03]{chow2003combinatorial}
Bennett Chow, Feng Luo, et~al.
\newblock Combinatorial {R}icci flows on surfaces.
\newblock {\em Journal of Differential Geometry}, 63(1):97--129, 2003.

\bibitem[CLXZ22]{chen2022bowersstephensons}
Yuxiang Chen, Yanwen Luo, Xu~Xu, and Siqi Zhang.
\newblock Bowers-stephenson's conjecture on the convergence of inversive distance circle packings to the riemann mapping, 2022.

\bibitem[DGL08]{dai2008variational}
Junfei Dai, Xianfeng~David Gu, and Feng Luo.
\newblock {\em Variational principles for discrete surfaces}, volume~4.
\newblock International Press of Boston Incorporated, 2008.

\bibitem[GGL{\etalchar{+}}18]{gu2018discreteII}
Xianfeng Gu, Ren Guo, Feng Luo, Jian Sun, and Tianqi Wu.
\newblock A discrete uniformization theorem for polyhedral surfaces ii.
\newblock {\em Journal of Differential Geometry}, 109(3):431--466, 07 2018.

\bibitem[GJ17a]{ge2017deformationII}
Huabin Ge and Wenshuai Jiang.
\newblock On the deformation of inversive distance circle packings, ii.
\newblock {\em Journal of Functional Analysis}, 272(9):3573--3595, 2017.

\bibitem[GJ17b]{ge2017deformation}
Huabin Ge and Wenshuai Jiang.
\newblock On the deformation of inversive distance circle packings, iii.
\newblock {\em Journal of Functional Analysis}, 272(9):3596--3609, 2017.

\bibitem[GJ19]{ge2019deformation}
Huabin Ge and Wenshuai Jiang.
\newblock On the deformation of inversive distance circle packings, i.
\newblock {\em Transactions of the American Mathematical Society}, 2019.

\bibitem[GLSW18]{gu2018discrete}
Xianfeng Gu, Feng Luo, Jian Sun, and Tianqi Wu.
\newblock A discrete uniformization theorem for polyhedral surfaces.
\newblock {\em Journal of Differential Geometry}, 109(2):223--256, 06 2018.

\bibitem[GLW19]{gu2019convergence}
David Gu, Feng Luo, and Tianqi Wu.
\newblock Convergence of discrete conformal geometry and computation of uniformization maps.
\newblock {\em Asian Journal of Mathematics}, 23(1):21--34, 2019.

\bibitem[Gor11]{gorlina2011weighted}
Yuliya Gorlina.
\newblock {\em Weighted {D}elaunay Triangulations of Piecewise-Flat Surfaces}.
\newblock PhD thesis, The University of Arizona, 2011.

\bibitem[Guo11]{guo2011local}
Ren Guo.
\newblock Local rigidity of inversive distance circle packing.
\newblock {\em Transactions of the American Mathematical Society}, 363(9):4757--4776, 2011.

\bibitem[Hat91]{hatcher1991triangulations}
Allen Hatcher.
\newblock On triangulations of surfaces.
\newblock {\em Topology Appl}, 40(2):189--194, 1991.

\bibitem[Hat02]{hatcher2002algebraic}
A.~Hatcher.
\newblock {\em Algebraic Topology}.
\newblock Cambridge University Press, 2002.

\bibitem[HS96]{he1996convergence}
Zheng-Xu He and Oded Schramm.
\newblock On the convergence of circle packings to the {R}iemann map.
\newblock {\em Inventiones mathematicae}, 125(2):285--305, 1996.

\bibitem[L{\etalchar{+}}07]{luo2007teichmuller}
Feng Luo et~al.
\newblock On {T}eichm{\"u}ller spaces of surfaces with boundary.
\newblock {\em Duke Mathematical Journal}, 139(3):463--482, 2007.

\bibitem[Luo11]{luo2011rigidity}
Feng Luo.
\newblock Rigidity of polyhedral surfaces, iii.
\newblock {\em Geometry \& Topology}, 15(4):2299--2319, 2011.

\bibitem[LWZ21]{luo2021convergence}
Yanwen Luo, Tianqi Wu, and Xiaoping Zhu.
\newblock The convergence of discrete uniformizations for genus zero surfaces, 2021.

\bibitem[Mon09]{mondello2009triangulated}
Gabriele Mondello.
\newblock Triangulated {R}iemann surfaces with boundary and the {W}eil-{P}etersson {P}oisson structure.
\newblock {\em Journal of Differential Geometry}, 81(2):391--436, 2009.

\bibitem[Mos88]{Mosher1988Tiling}
Lee Mosher.
\newblock Tiling the projective foliation space of a punctured surface.
\newblock {\em Transactions of the American Mathematical Society}, 306(1):1--70, 1988.

\bibitem[MR90]{marden1990on}
Al~Marden and Burt Rodin.
\newblock {\em On {T}hurston's formulation and proof of {A}ndreev's theorem}, pages 103--115.
\newblock Springer Berlin Heidelberg, Berlin, Heidelberg, 1990.

\bibitem[Pen87]{penner1987the}
Robert~C Penner.
\newblock The decorated teichm{\"u}ller space of punctured surfaces.
\newblock {\em Communications in Mathematical Physics}, 113(2):299--339, 1987.

\bibitem[Riv94]{rivin1994euclidean}
Igor Rivin.
\newblock {E}uclidean structures on simplicial surfaces and hyperbolic volume.
\newblock {\em Annals of mathematics}, 139(3):553--580, 1994.

\bibitem[RS{\etalchar{+}}87]{rodin1987convergence}
Burt Rodin, Dennis Sullivan, et~al.
\newblock The convergence of circle packings to the {R}iemann mapping.
\newblock {\em Journal of Differential Geometry}, 26(2):349--360, 1987.

\bibitem[Spr08]{springborn2008variational}
Boris~A Springborn.
\newblock A variational principle for weighted {D}elaunay triangulations and hyperideal polyhedra.
\newblock {\em Journal of Differential Geometry}, 78(2):333--367, 2008.

\bibitem[Ste99]{stephenson1999approximation}
Kenneth Stephenson.
\newblock The approximation of conformal structures via circle packing.
\newblock {\em Series in Approximations and decompositions}, 11:551--582, 1999.

\bibitem[Ste05]{stephenson2005introduction}
Kenneth Stephenson.
\newblock {\em Introduction to circle packing: The theory of discrete analytic functions}.
\newblock Cambridge University Press, 2005.

\bibitem[SWGL15]{sun2015discrete}
Jian Sun, Tianqi Wu, Xianfeng Gu, and Feng Luo.
\newblock Discrete conformal deformation: {A}lgorithm and experiments.
\newblock {\em SIAM Journal on Imaging Sciences}, 8(3):1421--1456, 2015.

\bibitem[Thu79]{thurston1979geometry}
William~P Thurston.
\newblock {\em The geometry and topology of three-manifolds}.
\newblock Princeton University Princeton, NJ, 1979.

\bibitem[Ush99]{ushijima1999canonical}
Akira Ushijima.
\newblock A canonical cellular decomposition of the {T}eichm{\"u}ller space of compact surfaces with boundary.
\newblock {\em Communications in mathematical physics}, 201(2):305--326, 1999.

\bibitem[ZGZ{\etalchar{+}}14]{zhang2014unified}
Min Zhang, Ren Guo, Wei Zeng, Feng Luo, Shing-Tung Yau, and Xianfeng Gu.
\newblock The unified discrete surface {R}icci flow.
\newblock {\em Graphical Models}, 76(5):321--339, 2014.

\bibitem[Zhu19]{zhu2019discrete}
Xiang Zhu.
\newblock {\em Discrete Uniformization Theorem for Inversive Distance Circle Packings}.
\newblock PhD thesis, Tsinghua University, Beijing, China, 2019.

\bibitem[Zhu23]{zhu2023existence}
Xiang Zhu.
\newblock The existence of inversive distance circle packing on polyhedral surface, 2023.

\end{thebibliography}

\end{document}